\documentclass[a4paper]{article}
\usepackage{amsmath,amssymb,amsfonts,amsthm} 
\usepackage{mathrsfs} 
\usepackage{enumerate} 
\usepackage{cases} 
\usepackage{float} 
\usepackage{graphicx} 
\allowdisplaybreaks[4] 
\usepackage{wrapfig}  
\usepackage{url} 
\usepackage[usenames]{color} 
\usepackage{setspace} 

\def\b1{\mbox{\boldmath $1$}} 

\textwidth=16cm \textheight=23cm \hoffset=-2cm \voffset=0cm 
\thinmuskip=1mu
\medmuskip=1mu plus 1mu minus 1mu
\thickmuskip=1mu plus 1mu


\makeatletter
\newcommand{\Biggg}{\bBigg@{3.5}} 
\makeatother

\theoremstyle{plain} 
\newtheorem{theorem}{\bf Theorem}[section] 
\newtheorem{corollary}{\bf Corollary}[section] 
\newtheorem{proposition}{\bf Proposition}[section] 
\newtheorem{definition}{\bf Definition}[section] 
\newtheorem{remark}{\bf Remark}[section]  

\makeatother
\allowdisplaybreaks 

\usepackage[numbers,sort&compress]{natbib} 
\usepackage[bookmarksnumbered,bookmarksopen,colorlinks,citecolor=blue,linkcolor=blue]{hyperref} 


\begin{document}
\date{}

\title{Turnpike properties for zero-sum stochastic linear quadratic differential games of Markovian regime switching system}
\author{Xun Li \thanks{Department of Applied Mathematics, The Hong Kong Polytechnic University, Hong Kong, China; E-mail: li.xun@polyu.edu.hk}\qquad
Fan Wu \thanks{Corresponding Author: School of Big Data and Statistics, Anhui University, Hefei 230601, China; E-mail:wfyy121107@163.com}\qquad
Xin Zhang \thanks{School of Mathematics, Southeast University, Nanjing 211189, China; E-mail: x.zhang.seu@gmail.com}}

\maketitle

\noindent{\bf Abstract:} This paper investigates the long-time behavior of zero-sum stochastic linear–quadratic (SLQ) differential games within Markov regime-switching diffusion systems and establishes the turnpike property of the optimal triple. By verifying the convergence of the associated coupled differential Riccati equations (CDREs) along with their convergence rate, we show that, for a sufficiently large time horizon, the equilibrium strategy in the finite-horizon problem can be closely approximated by that of the infinite-horizon problem. Furthermore, this study enhances and extends existing results concerning zero-sum SLQ differential games over both finite and infinite horizons.

\medskip

\noindent{\bf Keywords:} Zero-sum stochastic differential games;  Linear quadratic; Turnpike property; Coupled differential Riccati equations; Markov regime-switching diffusion system.

\section{Introduction}\label{section-1}
Let $(\Omega,\mathcal{F},\mathbb{P} )$ be a complete probability space with the natural filtration $\mathbb{F} :=
\{ \mathcal{F}_t\}_{t\geq 0}$ generated by a standard one-dimensional Brownian motion $W=\{W(t)\}_{t\geq 0}$ and a continuous time irreducible Markov chain $\alpha=\{\alpha_t\}_{t\geq 0}$ with a finite state space $\mathcal{S}:=\left\{1,2,\cdots,L\right\}$.
We let $\mathbb{R}^{n\times m}$ denote the Euclidean space of all $ n \times m$  matrices and set $\mathbb{R}^{n}:=\mathbb{R}^{n\times 1}$ for simplicity. In addition, the set of all $ n\times n$ symmetric matrices is denoted by $\mathbb{S}^n$. Specially, the sets of all $ n\times n$ semi-positive definite matrices and positive definite matrices are denoted by $\overline{\mathbb{S}_{+}^n}$ and $\mathbb{S}_{+}^n$, respectively. Further, for any $M, N \in \mathbb{S}^n$, we write $M \geqslant N$ (respectively, $M>N$) if $M-N$ is semi-positive definite (respectively, positive definite). Let $\mathcal{P}$ be the $\mathbb{F}$ predictable $\sigma$-field on $[0,\infty)\times\Omega$ and we write $\varphi \in \mathcal{P}$  (respectively, $\varphi \in \mathbb{F}$) if it is $\mathcal{P}$-measurable (respectively, $\mathbb{F}$-progressively measurable). Then, for any Euclidean space $\mathbb{H}$ and a time interval $\Gamma\subseteq [0,\infty)$, we introduce the following spaces:
$$
\begin{array}{l}
C(\Gamma; \mathbb{H}) =\Big\{\varphi: \Gamma  \rightarrow \mathbb{H}  \mid \varphi(\cdot) \text{ is a continuous function } \Big\},\\[3mm]
L_{\mathbb{G}}^{2}(\Gamma; \mathbb{H}) =\Big\{\varphi: \Gamma \times \Omega \rightarrow \mathbb{H} \mid \varphi(\cdot) \in \mathbb{G}\text{, } \mathbb{E} \int_{\Gamma}|\varphi(t)|^{2} dt<\infty\Big\},\quad \mathbb{G}=\mathbb{F}, \ \mathcal{P},\\[3mm]
L_{\mathbb{F}}^{2,loc}(\mathbb{H}) =\Big\{\varphi: [0, \infty)\times \Omega \rightarrow \mathbb{H} \mid \varphi(\cdot) \in \mathbb{F}\text{, } \mathbb{E} \int_{0}^{T}|\varphi(s)|^{2} ds<\infty, \ \forall T>0\Big\}.
\end{array}
$$
For simplicity, we denote $L_{\mathbb{F}}^{2}(\mathbb{H})=L_{\mathbb{F}}^{2}\left([0,\infty);\mathbb{H}\right)$ and $L_{\mathcal{P}}^{2}(\mathbb{H})=L_{\mathcal{P}}^{2}\left([0,\infty);\mathbb{H}\right)$.

Based on the above setting, we consider the following controlled  linear  stochastic differential equation (SDE):
 \begin{equation}\label{ZLQ-state}
   \left\{
   \begin{aligned}
   dX(t)&=\left[A(\alpha_{t})X(t)+B_{1}(\alpha_{t})u_{1}(t)+B_{2}(\alpha_{t})u_{2}(t)\right]dt\\
   &\quad+\left[C(\alpha_{t})X(t)+D_{1}(\alpha_{t})u_{1}(t)+D_{2}(\alpha_{t})u_{2}(t)\right]dW(t),\qquad t\geq0,\\
   X(0)&=x,\quad \alpha(0)=i,
   \end{aligned}
   \right.
 \end{equation}
 and performance functional with time horizon $T>0$:
 \begin{equation}\label{ZLQ-performance}
\begin{aligned}
    J_{T}\left(x,i;u_{1}(\cdot),u_{2}(\cdot)\right)
    \triangleq \mathbb{E}\int_{0}^{T}
    \left<
    \left(
    \begin{matrix}
    Q(\alpha_{t}) & S_{1}(\alpha_{t})^{\top} & S_{2}(\alpha_{t})^{\top} \\
    S_{1}(\alpha_{t}) & R_{11}(\alpha_{t}) & R_{12}(\alpha_{t}) \\
    S_{2}(\alpha_{t}) & R_{21}(\alpha_{t}) & R_{22}(\alpha_{t})
    \end{matrix}
    \right)
    \left(
    \begin{matrix}
    X(t) \\
    u_{1}(t) \\
    u_{2}(t)
    \end{matrix}
    \right),
    \left(
    \begin{matrix}
    X(t) \\
    u_{1}(t) \\
    u_{2}(t)
    \end{matrix}
    \right)
    \right>dt.
  \end{aligned}
\end{equation}
 In the above, $X(\cdot)\triangleq X(\cdot;x,i,u_{1},u_{2})\in \mathbb{R}^{n}$, is called the \emph{state process}, and $u_{k}(\cdot)\in \mathbb{R}^{m_{k}}$, is called the \emph{control process} of player $k$. Additionally, for $i\in\mathcal{S}$, the coefficients in state equation \eqref{ZLQ-state} and performance functional \eqref{ZLQ-performance} satisfies:
 $$
 A(i),\,C(i)\in\mathbb{R}^{n\times n},\quad B_{k}(i),\, D_{k}(i)\in\mathbb{R}^{n\times m_{k}},\quad k=1,2,
 $$
 and
 $$
 Q(i) \in \mathbb{S}^{n},   \quad
 R_{kk}(i) \in \mathbb{S}^{m_{k}},   \quad
 S_{k}(i)\in\mathbb{R}^{m_{k} \times n},   \quad
 R_{12}(i)=R_{21}(i)^{\top}\in\mathbb{R}^{m_{1} \times m_{2}},\quad k=1,2.
 $$
 For $k=1,2$, let $\mathcal{U}_{k}[0,T]=L_{\mathbb{F}}^{2}([0,T];\mathbb{R}^{m_{k}})$ and $\mathcal{U}[0,T]=\mathcal{U}_{1}[0,T]\times \mathcal{U}_{2}[0,T]$. 
 Thus, the zero-sum stochastic linear quadratic (SLQ) differential game with regime switching over the time horizon $[0,T],\,\,(T>0)$ can be summarized as follows.

\textbf{Problem (M-ZLQ)$_{T}$.} For any $(x,i)\in \mathbb{R}^{n}\times \mathcal{S}$, find a $(\bar{u}_{1,T}(\cdot),\bar{u}_{2,T}(\cdot))\in\mathcal{U}_{ad}(x,i)$ such that
\begin{equation}\label{ZLQ-problem}
\begin{aligned}
J\left(x,i;\bar{u}_{1,T}(\cdot),u_{2}(\cdot)\right)\leq J\left(x,i;\bar{u}_{1,T}(\cdot),\bar{u}_{2,T}(\cdot)\right)\triangleq V_{T}(x,i)\leq J\left(x,i;u_{1}(\cdot),\bar{u}_{2,T}(\cdot)\right),\\
\forall (u_{1}(\cdot),u_{2}(\cdot))\in \mathcal{U}[0,T].
\end{aligned}
\end{equation}

From above, we see that Player 1 aims to minimize \eqref{ZLQ-problem} through choosing a control $u_{1}(\cdot)$, while Player 2 aims to maximize \eqref{ZLQ-problem} by selecting a control $u_{2}(\cdot)$. Hence, the performance functional \eqref{ZLQ-problem} represents the \emph{cost} for Player 1 and the \emph{payoff} for Player 2. The pair $(\bar{u}_{1,T}(\cdot),\bar{u}_{2,T}(\cdot))$ (if it exist) is called an \emph{open-loop saddle strategy} of Problem (M-ZLQ)$_{T}$ at the initial pair $(x,i)$, $\bar{X}_{T}(\cdot)$ is called the corresponding \emph{open-loop optimal state process}, and $V_{T}(\cdot,\cdot)$ is called the \emph{value function} of the game. We also refer to $(\bar{X}_{T}(\cdot),\bar{u}_{1,T}(\cdot),\bar{u}_{2,T}(\cdot))$ as an \emph{open-loop optimal triple} at $(x,i)$. When such a triple exists for every initial pair $(x,i)$, we say that the Problem (M-ZLQ)$_{T}$ is \emph{open-loop solvable}.

It is worth to mention that, for any given initial pair $(x,i)$ and $(u_{1}(\cdot),u_{2}(\cdot))\in \mathcal{U}_{1}[0,\infty)\times \mathcal{U}_{2}[0,\infty)$, the state process $X(\cdot;x,i,u_{1},u_{2})$ defined in \eqref{ZLQ-state} is typically in $L_{\mathbb{F}}^{2,loc}(\mathbb{R}^{n})$, which can not ensure  the well-
posedness of the performance functional $J_{\infty}(x,i;u_{1}(\cdot),u_{2}(\cdot))$.  In the following, we call a control pair
$(u_{1}(\cdot),u_{2}(\cdot))\in \mathcal{U}_{1}[0,\infty)\times \mathcal{U}_{2}[0,\infty)$ admissible for the initial pair $(x,i)$ of Problem (M-ZLQ)$_{\infty}$ if the corresponding state process $X(\cdot;x,i,u_{1},u_{2})\in L_{\mathbb{F}}^{2}(\mathbb{R}^{n})$, and denote the set of admissible control pairs for the initial pair $(x,i)$ by $\mathcal{U}_{ad}(x,i)$. Clearly, the performance functional $J_{\infty}(x,i;u_{1}(\cdot),u_{2}(\cdot))$ is well-defined for any control pair $(u_{1}(\cdot),u_{2}(\cdot))\in \mathcal{U}_{ad}(x,i)$. In general, $\mathcal{U}_{ad}(x,i)$ depends on the initial pair $(x,i)$ and is only a subset of $\mathcal{U}_{1}[0,\infty)\times \mathcal{U}_{2}[0,\infty)$. However, as we can see in the next section, we can obtain
$\mathcal{U}_{ad}(x,i)=\mathcal{U}_{1}[0,\infty)\times \mathcal{U}_{2}[0,\infty)$ under appropriate condition.

In this paper, we are going to investigate the long-time asymptotic behavior of Problem (M-ZLQ)$_{T}$ as $T\rightarrow \infty$. It turns out that under appropriate conditions, the optimal triple of Problem (M-ZLQ)$_{T}$ exhibits the so-called \emph{exponential turnpike property}. Specifically, let $(\bar{X}_{T}(\cdot),\bar{u}_{1,T}(\cdot),\bar{u}_{2,T}(\cdot))$ be the optimal triple of Problem (M-ZLQ)$_{T}$. The exponential turnpike property asserts the existence of some constants $K,\,\mu>0$ (independent of the initial pair $(x,i)$ and the terminal horizon $T$) such that
\begin{equation}\label{exponential-turnpike-property-introduction}
\mathbb{E}\left[
\left|\bar{X}_{T}(t)-\bar{X}_{\infty}(t)\right|^{2}+\left|\bar{u}_{1,T}(t)-\bar{u}_{1,\infty}(t)\right|^{2}+\left|\bar{u}_{2,T}(t)-\bar{u}_{2,\infty}(t)\right|^{2}
\right]\leq K\left|x\right|^{2}\left[e^{-\mu(T-t)}+e^{-\mu t}\right],
\end{equation}
where $(\bar{X}_{\infty}(\cdot),\bar{u}_{1,\infty}(\cdot),\bar{u}_{2,\infty}(\cdot))$ is the corresponding optimal triple of Problem (M-ZLQ)$_{\infty}$.

Let $\kappa\in(0,\frac{1}{2})$ be an arbitrary number. Then inequality \eqref{exponential-turnpike-property-introduction} implies that 
$$
\mathbb{E}\left[
\left|\bar{X}_{T}(t)-\bar{X}_{\infty}(t)\right|^{2}+\left|\bar{u}_{1,T}(t)-\bar{u}_{1,\infty}(t)\right|^{2}+\left|\bar{u}_{2,T}(t)-\bar{u}_{2,\infty}(t)\right|^{2}
\right]\leq 2K\left|x\right|^{2} e^{-\mu\kappa T},\quad \forall t\in [\kappa T, (1-\kappa) T)].
$$
Since $K$ and $\mu$ are independent of $T$, the optimal triple $(\bar{X}_{T}(\cdot),\bar{u}_{1,T}(\cdot),\bar{u}_{2,T}(\cdot))$ remains very close to the optimal triple $(\bar{X}_{\infty}(\cdot),\bar{u}_{1,\infty}(\cdot),\bar{u}_{2,\infty}(\cdot))$ when the time horizon $[0,T]$ is very large. Therefore, we can use $(\bar{X}_{\infty}(\cdot),\bar{u}_{1,\infty}(\cdot),\bar{u}_{2,\infty}(\cdot))$ to approximately solve Problem (M-ZLQ)$_{T}$, and inequality \eqref{exponential-turnpike-property-introduction} provides an error estimate for this approximate solution.

The study of two-person zero-sum stochastic differential games can be traced back to Fleming and Souganidis \cite{fleming1989existence}. Later, Hamad\'ene and Lepeltier \cite{hamadene1995zero} explored these games using the backward stochastic differential equation (BSDE) approach. Following this, several subsequent works appeared afterward. See, for examples, \citet{buckdahn2008stochastic,wang2010pontryagin,wang2012partial,bayraktar2013weak,lv2020two}, and so on. Recently, the zero-sum SLQ differential game has garnered significant research interest. Mou et al \cite{mou2006two} examined the open-loop solvability of the zero-sum SLQ differential game problem (referred to as Problem (ZLQ)) by employing the Hilbert space approach. Sun and Yong \cite{Sun.J.R.2014_NILQ} extended this result by investigating the closed-loop solvability of Problem (ZLQ). However, both \citet{mou2006two} and \citet{Sun.J.R.2014_NILQ} did not address the conditions under which the associated differential Riccati equation (DRE) admits a solution. Yu \cite{yu2015optimal} studied the optimal feedback control for Problem (ZLQ) using the Riccati equation approach and proved the solvability of the DRE in a specific case. \citet{Sun2021} revisited Problem (ZLQ) and established the solvability of the DRE under the uniform convexity-concavity assumption. Inspired by these contributions, our previous works \cite{My-paper-2024-finite-M-ZLQ, My-paper-2024-infinite-M-ZLQ} investigated the zero-sum SLQ control problem over finite and infinite horizons, respectively.

Regarding the turnpike phenomenon, early investigations can be traced to the seminal contributions of \citet{ramsey1928mathematical} and \citet{neumann1945model}. The term ``turnpike" was coined by \citet{dorfman1987linear} in 1958, drawing an analogy to the typical features of American toll highways. Recently, significant progress has been achieved in establishing the turnpike property for deterministic optimal control problems; see \cite{porretta2013long,damm2014exponential,grune2018turnpike,lou2019turnpike,breiten2020turnpike,sakamoto2021turnpike} and the references therein. Nevertheless, investigations into the turnpike property for stochastic control problems remain relatively limited. To our knowledge, \citet{sun2022turnpike} was the first to examine the turnpike property for SLQ control problems,  followed by subsequent studies \cite{sun2024turnpike,sun2024turnpike-2,trelat2025exponential,schiessl2025turnpike}. Recently, \citet{sun2024long} further investigated the long-time behavior of zero-sum SLQ differential games and established the associated turnpike property.

In this paper, we investigate the turnpike property of zero-sum SLQ differential games for Markov regime-switching diffusion systems, building on our previous works \cite{My-paper-2024-finite-M-ZLQ,My-paper-2024-infinite-M-ZLQ}. The main contributions and challenges are outlined as follows:
\begin{enumerate}
  \item Establishing the turnpike property for Problem (M-ZLQ)$_{T}$ requires a thorough analysis of the convergence and convergence rate of solutions to the associated CDREs. Unlike the SLQ control problem for Markov regime-switching diffusion systems studied in \citet{mei2025turnpike}, solutions to these CDREs arising from zero-sum SLQ differential games lack monotonicity. Moreover, analyzing convergence and its rate for this coupled system is considerably more challenging than for a single differential Riccati equation, as seen in zero-sum SLQ differential games driven by diffusion models. Therefore, our results extend those reported in \citet{mei2025turnpike} and \citet{sun2024long}.

  \item The turnpike property \eqref{exponential-turnpike-property-introduction} for zero-sum SLQ differential games is established, under which one can approximate the optimal triple of Problem (M-ZLQ)$_{T}$ using the optimal open-loop saddle strategy of Problem (M-ZLQ)$_{\infty}$ when the time horizon $T$ is large enough. Notably, the closed-loop representation of the optimal open-loop saddle strategy for Problem (M-ZLQ)$_{\infty}$ depends solely on a set of coupled algebraic Riccati equations (CAREs), which are independent of the terminal time $T$, thereby offering substantial computational and practical advantages.

  \item Although our previous works \cite{My-paper-2024-finite-M-ZLQ, My-paper-2024-infinite-M-ZLQ} have, to some extent, addressed the zero-sum SLQ differential game over both finite and infinite horizons, several issues remain open and deserve further investigation (see Remark \ref{rmk-question}). In the process of establishing the turnpike property for Problem (M-ZLQ)$_{T}$, this study refines and extends the results previously reported in  \cite{My-paper-2024-finite-M-ZLQ, My-paper-2024-infinite-M-ZLQ}.
\end{enumerate}

The rest of the paper is organized as follows. Section \ref{section-2} introduces some basic notations and preliminary results for analyzing Problem (M-ZLQ)$_{T}$ and Problem (M-ZLQ)$_{\infty}$, whose unique solvability is provided in Section \ref{section-3}. Section \ref{section-4} shows the convergence of optimal triples between Problem (M-ZLQ)$_{T}$ and Problem (M-ZLQ)$_{\infty}$ in a suitable sense. Section \ref{section-5} further investigates this convergence rate and establishes the turnpike property.

\section{Preliminaries}\label{section-2}
We begin this section by introducing additional useful notations beyond those in the previous section. Let $\Pi:=[\pi_{ij}]_{i,j=1,\cdots,L}$ be the generator of the Markov chain $\alpha$ and $N_{j}(t)$ be the number of jumps into state $j$ up to time $t$ and set
$$\widetilde{N}_{j}(t)\triangleq N_{j}(t)-\int_{0}^{t}\lambda_{j}(s)ds,\quad \text{with}\quad \lambda_{j}(s)\triangleq \sum_{i\neq j}^{L}\pi_{ij}\mathbb{I}_{\{\alpha_{s-}=i\}}.$$
Then, for each $j\in \mathcal{S}$, the process $\widetilde{N}_{j}(\cdot)$ is an $\left(\mathbb{F},\mathbb{P}\right)$-martingale. For any given L-dimensional vector process $\mathbf{\Gamma}(\cdot)=\left[\Gamma_1(\cdot),\Gamma_2(\cdot),\cdots,\Gamma_L(\cdot)\right]$, we define
$$\mathbf{\Gamma}(s)\cdot d\mathbf{\widetilde{N}}(s)\triangleq\sum_{j=1}^{L}\Gamma_{j}(s)d\widetilde{N}_{j}(s). $$
We further let $M^{\top}$ denote the transpose of a matrix  $M$ and $\langle\cdot, \cdot\rangle$ denote the inner products in possibly different Hilbert spaces.

For any Banach space $\mathbb{B}$, we denote
$$\mathcal{D}\left(\mathbb{B}\right)\triangleq\left\{\mathbf{\Lambda}=\left(\Lambda(1),\cdots,\Lambda(L)\right) \mid \Lambda(i) \in \mathbb{B}\text{, } \forall i\in \mathcal{S}\right\}.$$
If \(\mathbf{\Lambda} \in \mathcal{D}(\mathbb{R}^{n \times n})\), we define \(\lambda\) and \(\mu\) as the smallest and largest eigenvalues of \(\mathbf{\Lambda}\), respectively, i.e.,
\[
\lambda = \min_{i \in \mathcal{S}} \lambda_i,
\quad
\mu = \max_{i \in \mathcal{S}} \mu_i,
\]
where $\lambda_{i}$ and $\mu_{i}$ are the smallest and largest eigenvalue of $\Lambda(i)$, $i\in\mathcal{S}$, respectively. Without causing confusion, we sometimes also say that $\lambda$ and $\mu$ are the smallest and largest eigenvalues of process $\Lambda(\alpha)$.

In what follows, we denote $m=m_{1}+m_{2}$, $u(\cdot)=(u_{1}(\cdot)^{\top},u_{2}(\cdot)^{\top})^{\top}$ and 
\begin{equation}\label{notation}
\begin{aligned}
&B(i)=(B_{1}(i),B_{2}(i)),\  D(i)=(D_{1}(i),D_{2}(i)), \
S(i)=\left(\begin{array}{c}
S_{1}(i)\\S_{2}(i)
\end{array}\right),\
R(i)=\left(\begin{array}{cc}R_{11}(i) &R_{12}(i)\\R_{21}(i) &R_{22}(i)
\end{array}\right), \ i\in \mathcal{S}.
\end{aligned}
\end{equation}
Thus, the state system can be rewritten as
\begin{equation}\label{state-2}
\left\{
\begin{array}{l}
dX(t)=\left[A(\alpha_{t})X(t)+B(\alpha_{t})u(t)\right]dt+\left[C(\alpha_{t})X(t)+D(\alpha_{t})u(t)\right]dW(t),\quad t\geq 0,\\
X(0)=x,\quad \alpha_{0}=i.
\end{array}
\right.
\end{equation}
For convenience, we denote the state system \eqref{state-2} as $[A,C;B,D]_{\alpha}$ and as $[A,C]_{\alpha}$ for $B=D\triangleq 0$, i.e., $[A,C]_{\alpha}\triangleq [A,C;0,0]_{\alpha}$.  We now introduce the following definitions of stability.
\begin{definition}
  \begin{description}
    \item[(i)] System $[A,C]_{\alpha}$ is said to be $L^{2}$-stable if its solution $X(\cdot;x,i)$ with initial pair $(x,i)$ satisfies
\begin{equation}\label{L2-stable}
 \mathbb{E}\int_{0}^{\infty}\left|X(t;x,i)\right|^{2}dt<\infty,\quad \forall (x,i)\in\mathbb{R}^{n}\times\mathcal{S}.
\end{equation}
    \item[(ii)] System $[A,C;B,D]_{\alpha}$  is said to be $L^{2}$-stabilizable if there exist an element $\mathbf{\Theta}\in\mathcal{D}(\mathbb{R}^{m\times n})$ such that the following system is $L^{2}$-stable:
\begin{equation}\label{state-closed}
\left\{
\begin{array}{l}
dX(t)=\left[A(\alpha_{t})+B(\alpha_{t})\Theta(\alpha_t)\right]X(t)dt+\left[C(\alpha_{t})+D(\alpha_{t})\Theta(\alpha_t)\right]X(t)dW(t),\quad t\geq 0,\\
X(0)=x,\quad \alpha_{0}=i,
\end{array}
\right.
\end{equation}
We denote the element $\mathbf{\Theta} \in \mathcal{D}(\mathbb{R}^{m \times n})$ as a stabilizer of the system $[A,C;B,D]_{\alpha}$, and the set of all such stabilizers by $\mathcal{H}[A,C;B,D]_{\alpha}$.
  \end{description}
\end{definition}

Suppose the system $[A,C]_{\alpha}$ is $L^{2}$-stable. Then, by Proposition 2.5 in Wu et al. \cite{My-paper-2025-infinite-M-SLQ}, the following system
\begin{equation}\label{AC-b-sigma}
\left\{
\begin{array}{l}
dX(t) = [A(\alpha_t) X(t) + b(t)]\,dt + [C(\alpha_t) X(t) + \sigma(t)]\,dW(t), \quad t \geq 0,\\
X(0)=x,\quad \alpha_0 = i,
\end{array}
\right.
\end{equation}
admits a unique solution $X(\cdot; x,i)\in L_{\mathbb{F}}^{2}(\mathbb{R}^{n})$ for every initial pair $(x,i)$ and any $b(\cdot),\, \sigma(\cdot)\in L_{\mathbb{F}}^{2}(\mathbb{R}^{n})$. Consequently, the admissible control set $\mathcal{U}_{ad}(x,i)$ of Problem (M-ZLQ)$_{\infty}$ can be represented as
$$
\mathcal{U}_{ad}(x,i) = \mathcal{U}_1[0,\infty) \times \mathcal{U}_2[0,\infty).
$$
For simplicity, we assume the following throughout this paper.

\textbf{(A1).} There exists a constant $\delta > 0$ such that for every $T \in (0, \infty]$ and all $i \in \mathcal{S}$,
\begin{equation}\label{uniformly-convex-concave-condition}
\begin{cases}
J_{T}(0,i; u_1(\cdot), 0) \geq \delta\, \mathbb{E} \int_0^T |u_1(t)|^2 dt, & \forall u_1(\cdot) \in \mathcal{U}_1[0,T], \\[3mm]
J_{T}(0,i; 0, u_2(\cdot)) \leq -\delta\, \mathbb{E} \int_0^T |u_2(t)|^2 dt, & \forall u_2(\cdot) \in \mathcal{U}_2[0,T].
\end{cases}
\end{equation}


\textbf{(A2).} The system $[A,C]_{\alpha}$ is $L^{2}$-stable.

\begin{remark}\rm
We refer to \eqref{uniformly-convex-concave-condition} as the uniform convexity-concavity assumption. As demonstrated in our prior research \cite{My-paper-2024-finite-M-ZLQ,My-paper-2024-infinite-M-ZLQ}, assumption (A1) is nearly necessary for solving Problem (M-ZLQ)$_{T}$. In contrast, assumption (A2) is introduced solely to ensure that the performance functional $J_{\infty}(x,i;u_{1}(\cdot),u_{2}(\cdot))$ is well-defined on $\mathbb{R}^{n}\times\mathcal{S}\times\mathcal{U}_{1}[0,\infty)\times \mathcal{U}_{2}[0,\infty)$.
\end{remark}

The following result offers refined estimates for the solution to SDE \eqref{AC-b-sigma}. 
\begin{proposition}\label{prop-AC-estimation}
Assuming (A1) holds, there exist constants $K, \mu > 0$ such that for any $b(\cdot), \sigma(\cdot) \in L_{\mathbb{F}}^{2}(\mathbb{R}^{n})$ and initial pair $(x,i) \in \mathbb{R}^{n} \times \mathcal{S}$, the solution $X(\cdot; x,i)$ to equation \eqref{AC-b-sigma} satisfies the estimates:
\begin{align}
\label{estimation-AC-bsigma-1}
&\mathbb{E}\left|X(t)\right|^{2}\leq K\left[e^{-\mu t}|x|^{2}+\mathbb{E}\int_{0}^{t}\left(|b(s)|^{2}+|\sigma(s)|^{2}\right)ds\right],\quad t\geq 0,\\
\label{estimation-AC-bsigma-2}
&\mathbb{E}\int_{0}^{t}\left|X(s)\right|^{2}ds\leq K\left[|x|^{2}+\mathbb{E}\int_{0}^{t}\left(|b(s)|^{2}+|\sigma(s)|^{2}\right)ds\right],\quad t\geq 0.
\end{align}
\end{proposition}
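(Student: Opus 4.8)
The plan is to prove both estimates by a Lyapunov-function argument: convert the $L^{2}$-stability of $[A,C]_{\alpha}$ into a uniform dissipation inequality along the trajectory of \eqref{AC-b-sigma}, and then read off \eqref{estimation-AC-bsigma-1}--\eqref{estimation-AC-bsigma-2} via Gr\"onwall's inequality and a direct time integration. First I would invoke the Lyapunov characterization of $L^{2}$-stability for regime-switching diffusions: since $[A,C]_{\alpha}$ is $L^{2}$-stable, there exists a family $\mathbf{P}=(P(1),\dots,P(L))\in\mathcal{D}(\mathbb{S}^{n}_{+})$ solving the coupled Lyapunov system
\[
P(i)A(i)+A(i)^{\top}P(i)+C(i)^{\top}P(i)C(i)+\sum_{j\in\mathcal{S}}\pi_{ij}P(j)=-I_{n},\qquad i\in\mathcal{S}.
\]
Write $0<\lambda_{0}\le\mu_{0}$ for the smallest and largest eigenvalues of $\mathbf{P}$, so that $\lambda_{0}|x|^{2}\le\langle P(i)x,x\rangle\le\mu_{0}|x|^{2}$ for all $(x,i)$.

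Next I would apply It\^o's formula to $V(t):=\langle P(\alpha_{t})X(t),X(t)\rangle$ along the solution of \eqref{AC-b-sigma}. Taking expectations — the Brownian stochastic integral and the compensated jump martingales $\widetilde{N}_{j}$ contribute nothing — the purely quadratic-in-$X$ part of the generator collapses to $-|X(t)|^{2}$ by the Lyapunov equation, leaving
\[
\frac{d}{dt}\mathbb{E}V(t)=-\mathbb{E}|X(t)|^{2}+\mathbb{E}\!\left[2\langle P(\alpha_{t})X(t),b(t)\rangle+2\langle P(\alpha_{t})C(\alpha_{t})X(t),\sigma(t)\rangle+\langle P(\alpha_{t})\sigma(t),\sigma(t)\rangle\right].
\]
Using the uniform boundedness of the coefficients and Young's inequality to absorb the cross terms into $\tfrac12\mathbb{E}|X(t)|^{2}$, one obtains a constant $C>0$ with
\[
\frac{d}{dt}\mathbb{E}V(t)\le-\tfrac12\,\mathbb{E}|X(t)|^{2}+C\big(\mathbb{E}|b(t)|^{2}+\mathbb{E}|\sigma(t)|^{2}\big).
\]

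From this differential inequality both claims follow quickly. For \eqref{estimation-AC-bsigma-1} I would bound $\mathbb{E}|X(t)|^{2}\ge\mu_{0}^{-1}\mathbb{E}V(t)$, apply Gr\"onwall's inequality with integrating factor $e^{t/(2\mu_{0})}$, use $e^{-(t-s)/(2\mu_{0})}\le 1$ inside the resulting Duhamel integral together with $V(0)\le\mu_{0}|x|^{2}$, and divide through by $\lambda_{0}$; this yields \eqref{estimation-AC-bsigma-1} with $\mu=(2\mu_{0})^{-1}$ and $K=\max\{\lambda_{0}^{-1}\mu_{0},\lambda_{0}^{-1}C\}$. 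For \eqref{estimation-AC-bsigma-2} I would integrate the displayed inequality over $[0,t]$ and discard the nonnegative term $\mathbb{E}V(t)$ on the left, which gives $\tfrac12\mathbb{E}\int_{0}^{t}|X(s)|^{2}ds\le\mu_{0}|x|^{2}+C\,\mathbb{E}\int_{0}^{t}(|b(s)|^{2}+|\sigma(s)|^{2})ds$; enlarging $K$ if necessary completes the proof.

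The step I expect to be the genuine obstacle is the first one — having the coupled Lyapunov matrix $\mathbf{P}$ available, i.e.\ the converse Lyapunov theorem asserting that $L^{2}$-stability of $[A,C]_{\alpha}$ is equivalent to the solvability of the above Lyapunov system with positive definite $\mathbf{P}$; once this is in hand (from the standard regime-switching stability theory or the authors' earlier work), everything else is routine It\^o calculus and Gr\"onwall estimates. If one prefers to avoid introducing $\mathbf{P}$ explicitly, an alternative route is to start from the equivalence between $L^{2}$-stability and the exponential bound $\mathbb{E}|X(t;x,i)|^{2}\le Ke^{-\mu t}|x|^{2}$ for the homogeneous system and combine it with a Duhamel-type representation of the solution of \eqref{AC-b-sigma}, but this is computationally heavier. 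A minor technical point is the bookkeeping of the jump-martingale terms $\widetilde{N}_{j}$ when differentiating $\mathbb{E}V(t)$, though these drop out under expectation.
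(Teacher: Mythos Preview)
Your approach is correct and is exactly the Lyapunov-function computation that underlies the cited result; the paper does not spell it out but simply defers \eqref{estimation-AC-bsigma-1} to the proof of Proposition~2.5 in \cite{My-paper-2025-infinite-M-SLQ} and then asserts \eqref{estimation-AC-bsigma-2} ``by integrating both sides of the first.'' Your derivation of \eqref{estimation-AC-bsigma-2} is in fact cleaner than the paper's remark: integrating \eqref{estimation-AC-bsigma-1} as literally written would pick up a spurious factor of $t$ on the inhomogeneous term (no $e^{-\mu(t-s)}$ decay is recorded there), whereas integrating your pre-Gr\"onwall differential inequality for $\mathbb{E}V(t)$ over $[0,t]$ avoids the issue entirely. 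The converse Lyapunov theorem you flag as the substantive input is indeed available---it is invoked later in the paper as Proposition~2.2 of \cite{My-paper-2025-infinite-M-SLQ}. Finally, the hypothesis ``(A1)'' in the statement is evidently a typo for (A2); you were right to work from the $L^{2}$-stability of $[A,C]_{\alpha}$.
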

\begin{proof}
The first estimate \eqref{estimation-AC-bsigma-1} follows from the proof of Proposition 2.5 in \cite{My-paper-2025-infinite-M-SLQ} (see the last equation on page 860), while the second estimate \eqref{estimation-AC-bsigma-2} is obtained by integrating both sides of the first.
\end{proof}

As a consequence of Proposition \ref{prop-AC-estimation}, the following result gives an equivalent characterization of the $L^{2}$-stability, also known as the \emph{mean-square exponential stability} of $[A,C]_{\alpha}$.

\begin{corollary}\label{coro-AC}
System $[A,C]_{\alpha}$ is $L^{2}$-stable if and only if there exist constants $K,\, \mu>0$ such that
\begin{equation}\label{estimation-AC}
\mathbb{E}\left|X(t;x,i)\right|^{2}\leq Ke^{-\mu t}|x|^{2},\quad \forall t\geq 0,\quad \forall (x,i)\in\mathbb{R}^{n}\times\mathcal{S}.
\end{equation}
\end{corollary}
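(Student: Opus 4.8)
The statement is an equivalence whose two implications differ sharply in difficulty, so the plan is to dispatch the easy one directly and to obtain the substantive one from Proposition~\ref{prop-AC-estimation}. For the ``if'' direction, assuming \eqref{estimation-AC}, I would integrate it over $[0,\infty)$ to get $\mathbb{E}\int_0^\infty|X(t;x,i)|^2\,dt\le|x|^2\int_0^\infty Ke^{-\mu t}\,dt=\tfrac{K}{\mu}|x|^2<\infty$ for every $(x,i)\in\mathbb{R}^n\times\mathcal{S}$, which is precisely the defining property \eqref{L2-stable}; there is no obstacle here.

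For the ``only if'' direction I would read the bound off Proposition~\ref{prop-AC-estimation}. Assume $[A,C]_\alpha$ is $L^2$-stable and apply \eqref{estimation-AC-bsigma-1} to equation \eqref{AC-b-sigma} with $b(\cdot)\equiv0$ and $\sigma(\cdot)\equiv0$: then \eqref{AC-b-sigma} reduces to the homogeneous system $[A,C]_\alpha$, the integral term on the right-hand side of \eqref{estimation-AC-bsigma-1} vanishes, and what remains is exactly $\mathbb{E}|X(t;x,i)|^2\le Ke^{-\mu t}|x|^2$ with $K,\mu$ independent of $(x,i)$, that is, \eqref{estimation-AC}. The only point requiring care is that \eqref{estimation-AC-bsigma-1} be available under the hypothesis actually in force here, namely $L^2$-stability of $[A,C]_\alpha$ rather than some stronger condition; this is fine since \eqref{estimation-AC-bsigma-1} is extracted from the proof of Proposition~2.5 in \cite{My-paper-2025-infinite-M-SLQ}, whose standing assumption is precisely that $[A,C]_\alpha$ be $L^2$-stable.

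For transparency I would also indicate, in self-contained form, where the real content of the implication lies. Define $\mathbf{P}=(P(1),\dots,P(L))\in\mathcal{D}(\mathbb{S}^n)$ by $\langle P(i)x,x\rangle:=\mathbb{E}\int_0^\infty|X(t;x,i)|^2\,dt$; linearity of \eqref{state-2} in $x$ together with polarization makes each $P(i)$ a genuine symmetric matrix, nonnegative and finite by $L^2$-stability, so $0\le P(i)\le C_0I$ with $C_0:=\max_i\lambda_{\max}(P(i))<\infty$. The energy identity $\frac{d}{dt}\mathbb{E}|X(t;x,i)|^2=\mathbb{E}\big[2\langle X,A(\alpha)X\rangle+|C(\alpha)X|^2\big]\ge-c\,\mathbb{E}|X(t;x,i)|^2$ with $c:=2\max_i\|A(i)\|$ yields $\mathbb{E}|X(t;x,i)|^2\ge e^{-ct}|x|^2$, hence $P(i)\ge c^{-1}I>0$. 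Using the time-homogeneity of $(X,\alpha)$ and It\^o's formula one verifies that $\mathbf{P}$ solves the coupled Lyapunov system $P(i)A(i)+A(i)^\top P(i)+C(i)^\top P(i)C(i)+\sum_{j\in\mathcal{S}}\pi_{ij}P(j)+I=0$, $i\in\mathcal{S}$; applying It\^o to $t\mapsto\langle P(\alpha_t)X(t),X(t)\rangle$ along a solution of $[A,C]_\alpha$ makes the drift equal to $-|X(t)|^2$, so $\frac{d}{dt}\mathbb{E}\langle P(\alpha_t)X(t),X(t)\rangle=-\mathbb{E}|X(t)|^2\le-C_0^{-1}\mathbb{E}\langle P(\alpha_t)X(t),X(t)\rangle$, and Gr\"onwall combined with $c^{-1}I\le P(i)\le C_0I$ gives \eqref{estimation-AC} with $K=cC_0$ and $\mu=C_0^{-1}$.

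The main obstacle along this self-contained route is the derivation of the coupled Lyapunov equation: one must justify differentiating $h\mapsto\mathbb{E}\int_0^h|X(t;x,i)|^2\,dt+\mathbb{E}\langle P(\alpha_h)X(h),X(h)\rangle$ at $h=0$, which involves the generator of the pair $(X,\alpha)$ acting on the switching component of $\mathbf{P}$ and a uniform-integrability control of the martingale part. Along the route through Proposition~\ref{prop-AC-estimation}, by contrast, there is essentially nothing to do once one sets $b=\sigma=0$, so the genuine difficulty has already been absorbed into that proposition and its source \cite{My-paper-2025-infinite-M-SLQ}.
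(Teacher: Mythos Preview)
Your proposal is correct and matches the paper's proof exactly: the paper also disposes of the ``if'' direction as obvious and obtains the ``only if'' direction by specializing \eqref{estimation-AC-bsigma-1} to $b=\sigma=0$. Your additional self-contained Lyapunov sketch (defining $\langle P(i)x,x\rangle=\mathbb{E}\int_0^\infty|X(t;x,i)|^2\,dt$, deriving the coupled Lyapunov equation, and running Gr\"onwall on $\mathbb{E}\langle P(\alpha_t)X(t),X(t)\rangle$) is sound and goes beyond what the paper provides; you are also right to flag that the hypothesis actually needed in Proposition~\ref{prop-AC-estimation} is $L^2$-stability of $[A,C]_\alpha$ (i.e., (A2)) rather than (A1) as stated there---the paper's own proof of the corollary tacitly relies on this.
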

\begin{proof}
Clearly, \eqref{estimation-AC} implies \eqref{L2-stable}. Conversely, if the system $[A,C]_{\alpha}$ is $L^{2}$-stable, then by setting $b(\cdot)=\sigma(\cdot)=0$ in \eqref{estimation-AC-bsigma-1}, it follows that $[A,C]_{\alpha}$ is mean-square exponentially stable.
\end{proof}


We conclude this section by introducing notations that will be frequently used in the following sections. For a given $\mathbf{P}(\cdot) \in \mathcal{D}\left(C([0,T];\mathbb{S}^{n})\right)$, we define:
 \begin{equation}\label{notation-MLN-T}
 \left\{
\begin{array}{l}
\mathcal{M}(t;\mathbf{P}, i)\triangleq P(t,i)A(i)+A(i)^{\top}P(t,i)+C(i)^{\top}P(t,i)C(i)+Q(i)+\sum_{j=1}^{L}\pi_{ij}P(t,j)\\[3mm]
\mathcal{L}(t;\mathbf{P}, i)\triangleq\left(\mathcal{L}_{1}(t;\mathbf{P}, i),\mathcal{L}_{2}(t;\mathbf{P}, i)\right)= P(t,i)B(i)+C(i)^{\top}P(t,i)D(i)+S(i)^{\top}\\[3mm]
\mathcal{N}(t;\mathbf{P}, i)\triangleq \left(\begin{array}{cc}
\mathcal{N}_{11}(t;\mathbf{P}, i) &\mathcal{N}_{12}(t;\mathbf{P}, i)\\\mathcal{N}_{21}(t;\mathbf{P}, i) &\mathcal{N}_{22}(t;\mathbf{P}, i)
\end{array}\right) = D(i)^{\top}P(t,i)D(i)+R(i), \quad i\in\mathcal{S},
\end{array}
\right.
 \end{equation}
 with
 \begin{equation}\label{notation-MLN-T-2}
 \left\{
\begin{array}{l}
\mathcal{L}_{k}(t;\mathbf{P}, i)\triangleq P(t,i)B_{k}(i)+C(i)^{\top}P(t,i)D_{k}(i)+S_{k}(i)^{\top}\\[3mm]
\mathcal{N}_{kl}(t;\mathbf{P}, i)\triangleq D_{k}(i)^{\top}P(t,i)D_{l}(i)+R_{kl}(i),\quad k,l\in\{1,2\},\quad i\in \mathcal{S}.
\end{array}
\right.
 \end{equation}
 Furthermore, we also introduce similar notations for Problem (M-ZLQ)$_{\infty}$. For a given $\mathbf{P}\in\mathcal{D}(\mathbb{S}^{n})$, we define
 \begin{equation}\label{notation-MLN-infinite}
 \left\{
\begin{array}{l}
\mathcal{M}(\mathbf{P}, i)\triangleq P(i)A(i)+A(i)^{\top}P(i)+C(i)^{\top}P(i)C(i)+Q(i)+\sum_{j=1}^{L}\pi_{ij}P(j)\\[3mm]
\mathcal{L}(\mathbf{P}, i)\triangleq \left(\mathcal{L}_{1}(P(i),\mathcal{L}_{2}(\mathbf{P}, i)\right)= P(i)B(i)+C(i)^{\top}P(i)D(i)+S(i)^{\top}\\[3mm]
\mathcal{N}(\mathbf{P}, i)\triangleq \left(\begin{array}{cc}
\mathcal{N}_{11}(\mathbf{P}, i) &\mathcal{N}_{12}(\mathbf{P}, i)\\\mathcal{N}_{21}(\mathbf{P}, i) &\mathcal{N}_{22}(\mathbf{P}, i)
\end{array}\right)= D(i)^{\top}P(i)D(i)+R(i), \quad i\in\mathcal{S},
\end{array}
\right.
 \end{equation}
 with
 \begin{equation}\label{notation-MLN-infinite-2}
 \left\{
\begin{array}{l}
\mathcal{L}_{k}(\mathbf{P}, i)\triangleq P(i)B_{k}(i)+C(i)^{\top}P(i)D_{k}(i)+S_{k}(i)^{\top}\\[3mm]
\mathcal{N}_{kl}(\mathbf{P}, i)\triangleq D_{k}(i)^{\top}P(i)D_{l}(i)+R_{kl}(i),\quad k,l\in\{1,2\},\quad i\in \mathcal{S}.
\end{array}
\right.
 \end{equation}

\section{The unique solvability of zero-sum SLQ differential game}\label{section-3}
In this section, we establish the uniqueness and existence of an open-loop saddle strategy for Problem (M-ZLQ)$_{T}$ and Problem (M-ZLQ)$_{\infty}$ under assumptions (A1)-(A2). The following result essentially is taken from Wu et al. \cite{My-paper-2024-finite-M-ZLQ}.

\begin{theorem}\label{thm-T-open-loop-solvable}
Assume $(A1)$ holds. Then the following statements are true: 
\begin{description}
  \item[(i)] A control pair $\bar{u}_{T}(\cdot)=(\bar{u}_{1,T}(\cdot)^{\top},\bar{u}_{2,T}(\cdot)^{\top})^{\top}\in\mathcal{U}[0,T]$ is an open-loop saddle strategy of Problem (M-ZLQ)$_{T}$ for the initial pair $(x,i)\in\mathbb{R}^{n}\times\mathcal{S}$ if and only if the adapted solution $\left(\bar{X}_{T}(\cdot),\bar{Y}_{T}(\cdot),\bar{Z}_{T}(\cdot),\bar{\mathbf{\Gamma}}_{T}(\cdot)\right)$ to the forward backward stochastic differential equations (FBSDEs):
  \begin{equation}\label{FBSDEs-T}
  \left\{
      \begin{aligned}
      d\bar{X}_{T}(t)&=\left[A(\alpha_{t})\bar{X}_{T}(t)+B(\alpha_{t})\bar{u}_{T}(t)\right]dt+\left[C(\alpha_{t})\bar{X}_{T}(t)+D(\alpha_{t})\bar{u}_{T}(t)\right]dW(t),\\
      d\bar{Y}_{T}(t)&=-\left[A(\alpha_{t})^{\top}\bar{Y}_{T}(t)+C(\alpha_{t})^{\top}\bar{Z}_{T}(t)+Q(\alpha_{t})\bar{X}_{T}(t)+S(\alpha_{t})^{\top}\bar{u}_{T}(t)\right]dt\\
      &\quad+\bar{Z}_{T}(t)dW(t)+\bar{\mathbf{\Gamma}}_{T}(t)\cdot d\mathbf{\widetilde{N}}(t),\quad t\in [0,T],\\
     \bar{X}_{T} (0)&=x,\quad\alpha_{0}=i,
      \end{aligned}
      \right.
  \end{equation}
  satisfies the stationary condition:
  \begin{equation}\label{stationary-condition-T}
     B(\alpha_{t})^{\top}\bar{Y}_{T}(t)+ D(\alpha_{t})^{\top}\bar{Z}_{T}(t)+S(\alpha_{t})\bar{X}_{T}(t)+R(\alpha_{t})\bar{u}_{T}(t)=0,\quad a.e.\quad a.s..
  \end{equation}
    \item[(ii)] The following coupled differential Riccati equations
    \begin{equation}\label{CDREs-T}
   \left\{
    \begin{aligned}
    &\dot{P}_{T}(t,i)+\mathcal{M}(t;\mathbf{P_{T}},i)-\mathcal{L}(t;\mathbf{P_{T}},i)\mathcal{N}(t;\mathbf{P_{T}},i)^{-1}\mathcal{L}(t;\mathbf{P_{T}},i)^{\top}=0,\\
    &P(T,i)=0,\quad i\in\mathcal{S},
    \end{aligned}
    \right.
\end{equation}
admits a solution $\mathbf{P_{T}}(\cdot)=(P_T(\cdot,1),\cdots,P_T(\cdot,L))\in \mathcal{D}\left(C([0,T];\mathbb{S}^{n})\right)$ such that
\begin{equation}\label{strongly-regular-condition-T}
 \mathcal{N}_{11}(t;\mathbf{P_{T}},i)\geq \delta I ,\qquad \mathcal{N}_{22}(t;\mathbf{P_{T}},i)\leq -\delta I,\quad i\in\mathcal{S}.
\end{equation}
  \item[(iii)]  Let $\mathbf{P_{T}}(\cdot)\in \mathcal{D}\left(C([0,T];\mathbb{S}^{n})\right)$  be the solution of \eqref{CDREs-T} and set
  \begin{equation}\label{Theta-T}
    \Theta_{T}(t,i)\triangleq - \mathcal{N}(t;\mathbf{P_{T}},i)^{-1}\mathcal{L}(t;\mathbf{P_{T}},i)^{\top}, \quad t\in[0,T],\quad i\in\mathcal{S}.
  \end{equation}
  Then for any initial pair $(x,i)$,
  \begin{equation}\label{closed-loop-representation-T}
  \bar{u}_{T}(t)\triangleq \Theta_{T}(t,\alpha_{t})\bar{X}_{T}(t;x,i),\quad t\in[0,T],
 \end{equation}
 with $\bar{X}_{T}(\cdot;x,i)$ being the solution of the following closed-loop system:
  \begin{equation}\label{closed-loop-state-T}
   \left\{
   \begin{aligned}
   &d\bar{X}_{T}(t)=\left[A(\alpha_{t})+B(\alpha_{t})\Theta_{T}(t,\alpha_{t})\right]\bar{X}_{T}(t)dt
   +\left[C(\alpha_{t})+D(\alpha_{t})\Theta_{T}(t,\alpha_{t})\right]\bar{X}_{T}(t)dW(t)\\
   &\bar{X}_{T}(0)=x,\quad \alpha_{0}=i,
   \end{aligned}
   \right.
  \end{equation}
 is an open-loop saddle strategy of Problem (M-ZLQ)$_{T}$ for the initial pair $(x,i)$.
\end{description}
\end{theorem}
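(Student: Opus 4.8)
The plan is to prove the three assertions in the stated order: (i) by a convex/concave variational argument, (ii) --- which I expect to be the main obstacle --- by deriving the global solvability of the coupled differential Riccati equations \eqref{CDREs-T} from the uniform convexity--concavity assumption (A1), and (iii) as a direct consequence of (i), (ii) and It\^o's formula. For (i), I would fix the initial pair $(x,i)$ and use that a pair $\bar{u}_{T}=(\bar{u}_{1,T}^{\top},\bar{u}_{2,T}^{\top})^{\top}$ is a saddle strategy exactly when $\bar{u}_{1,T}$ minimizes $v_{1}\mapsto J_{T}(x,i;v_{1},\bar{u}_{2,T})$ and $\bar{u}_{2,T}$ maximizes $v_{2}\mapsto J_{T}(x,i;\bar{u}_{1,T},v_{2})$; under (A1) the former functional is uniformly convex and the latter uniformly concave, so each is characterized by the vanishing of its G\^ateaux derivative (which moreover yields uniqueness of the two best responses). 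Introducing the adjoint triple $(\bar{Y}_{T},\bar{Z}_{T},\bar{\mathbf{\Gamma}}_{T})$ as the solution of the backward equation in \eqref{FBSDEs-T} --- the $\bar{\mathbf{\Gamma}}_{T}$-term coming from the martingale representation with respect to $W$ and the compensated jump processes $\widetilde{N}_{j}$ --- a duality computation between the variational equation for the state and this adjoint equation turns the two stationarity relations, one per player, into the two block rows of \eqref{stationary-condition-T}; conversely, at any solution of \eqref{FBSDEs-T}--\eqref{stationary-condition-T} both derivatives vanish, so convexity in $u_{1}$ and concavity in $u_{2}$ promote the stationary point to a saddle point. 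This reproduces the argument of Wu et al. \cite{My-paper-2024-finite-M-ZLQ}.

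For (ii), I would first note that letting $T\downarrow 0$ in (A1) forces $R_{11}(i)\geq\delta I$ and $R_{22}(i)\leq-\delta I$, so $\mathcal{N}(T;\mathbf{P_{T}},i)=R(i)$ is invertible and the right-hand side of \eqref{CDREs-T} is locally Lipschitz near $t=T$; this yields a local solution on some $(T-\varepsilon,T]$ along which \eqref{strongly-regular-condition-T} holds. To continue it to all of $[0,T]$, the key is an a priori bound: on any subinterval $(t_{0},T]$ where the solution exists it represents the value of the subgame on $[t_{0},T]$, and because (A1) is assumed for \emph{every} horizon --- in particular for every such subinterval --- one obtains a bound on $\mathbf{P_{T}}$ and the persistence of \eqref{strongly-regular-condition-T} that are uniform in $t_{0}$ and in $i\in\mathcal{S}$, after which standard ODE continuation produces the global solution. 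The feature that makes this harder than a single differential Riccati equation is the coupling through $\sum_{j}\pi_{ij}P_{T}(t,j)$ together with the loss of monotonicity intrinsic to the game: comparison and monotone-limit arguments are unavailable, so one must rely on the game-theoretic estimate above. I would follow \cite{My-paper-2024-finite-M-ZLQ} for the remaining details.

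For (iii), I would take $\mathbf{P_{T}}$ from (ii), define $\Theta_{T}$ by \eqref{Theta-T} (well defined since $\mathcal{N}(t;\mathbf{P_{T}},i)$ is invertible for the solution of \eqref{CDREs-T}), and let $\bar{X}_{T}$ solve the closed-loop system \eqref{closed-loop-state-T}, which is well posed as its coefficients are bounded and continuous on $[0,T]$. Setting $\bar{Y}_{T}(t):=P_{T}(t,\alpha_{t})\bar{X}_{T}(t)$ and applying It\^o's formula for Markov-modulated diffusions, the diffusion part identifies $\bar{Z}_{T}(t)=P_{T}(t,\alpha_{t})(C(\alpha_{t})+D(\alpha_{t})\Theta_{T}(t,\alpha_{t}))\bar{X}_{T}(t)$ and the jump part $\bar{\Gamma}_{T,j}(t)=(P_{T}(t,j)-P_{T}(t,\alpha_{t-}))\bar{X}_{T}(t)$, while the drift, after inserting \eqref{CDREs-T} and $\bar{u}_{T}(t)=\Theta_{T}(t,\alpha_{t})\bar{X}_{T}(t)$, collapses to $-[A(\alpha_{t})^{\top}\bar{Y}_{T}+C(\alpha_{t})^{\top}\bar{Z}_{T}+Q(\alpha_{t})\bar{X}_{T}+S(\alpha_{t})^{\top}\bar{u}_{T}]$; hence $(\bar{X}_{T},\bar{Y}_{T},\bar{Z}_{T},\bar{\mathbf{\Gamma}}_{T})$ solves \eqref{FBSDEs-T}. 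It then remains to check \eqref{stationary-condition-T}, which reduces to $(\mathcal{L}(t;\mathbf{P_{T}},\alpha_{t})^{\top}+\mathcal{N}(t;\mathbf{P_{T}},\alpha_{t})\Theta_{T}(t,\alpha_{t}))\bar{X}_{T}(t)=0$ and is immediate from \eqref{Theta-T}; by part (i), $\bar{u}_{T}$ is then an open-loop saddle strategy of Problem (M-ZLQ)$_{T}$ for $(x,i)$. The only slightly delicate bookkeeping here is the It\^o formula for the regime-switching terms, but that is routine.
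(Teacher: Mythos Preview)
Your proposal is correct and follows the standard route for this class of results. Note, however, that the paper does not actually give a proof of this theorem: it simply states that the result ``essentially is taken from Wu et al.\ \cite{My-paper-2024-finite-M-ZLQ}'' and moves on. Your outline---variational characterization via G\^ateaux derivatives and duality for (i), local existence plus a priori bounds from the uniform convexity--concavity to get global solvability for (ii), and the decoupling ansatz $\bar{Y}_{T}=P_{T}\bar{X}_{T}$ verified by It\^o's formula for (iii)---is precisely the approach one expects in the cited reference, and your sketch is already more detailed than anything the present paper provides.
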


\begin{remark}\rm
 We call a solution $\mathbf{P_{T}}(\cdot) \in \mathcal{D}\bigl(C([0,T];\mathbb{S}^{n})\bigr)$ to the CDREs \eqref{CDREs-T} strongly regular if it satisfies condition \eqref{strongly-regular-condition-T}, which also implies that \(\mathcal{N}(t; \mathbf{P_{T}}, i)\) is invertible.
\end{remark}

The following result establishes the open-loop solvability of Problem (M-ZLQ)$_{\infty}$, which can be found in \citet{My-paper-2024-infinite-M-ZLQ}.

\begin{theorem}\label{thm-open-loop-solvable-infty}
  Assume (A1)-(A2) hold. Then the following statements are true:
  \begin{description}
    \item[(i)] A control pair $\bar{u}_{\infty}(\cdot)\triangleq (\bar{u}_{1,\infty}(\cdot)^{\top},\bar{u}_{2,\infty}(\cdot)^{\top})^{\top}\in\mathcal{U}_{ad}(x,i)$ is an open-loop saddle strategy of Problem (M-ZLQ)$_{\infty}$ for the initial pair $(x,i)$ if and only if the adapted solution $\left(\bar{X}_{\infty}(\cdot),\bar{Y}_{\infty}(\cdot),\bar{Z}_{\infty}(\cdot),\bar{\mathbf{\Gamma}}_{\infty}(\cdot)\right)$ to the FBSDEs:
  \begin{equation}\label{FBSDEs-infinite}
  \left\{
      \begin{aligned}
      d\bar{X}_{\infty}(t)&=\left[A(\alpha_{t})\bar{X}_{\infty}(t)+B(\alpha_{t})\bar{u}_{\infty}(t)\right]dt+\left[C(\alpha_{t})\bar{X}_{\infty}(t)+D(\alpha_{t})\bar{u}_{\infty}(t)\right]dW(t),\\
      d\bar{Y}_{\infty}(t)&=-\left[A(\alpha_{t})^{\top}\bar{Y}_{\infty}(t)+C(\alpha_{t})^{\top}\bar{Z}_{\infty}(t)+Q(\alpha_{t})\bar{X}_{\infty}(t)+S(\alpha_{t})^{\top}\bar{u}_{\infty}(t)\right]dt\\
      &\quad+\bar{Z}_{\infty}(t)dW(t)+\bar{\mathbf{\Gamma}}_{\infty}(t)\cdot d\mathbf{\widetilde{N}}(t),\quad t\geq 0,\\
     \bar{X}_{\infty} (0)&=x,\quad\alpha_{0}=i,
      \end{aligned}
      \right.
  \end{equation}
  satisfies the stationary condition:
  \begin{equation}\label{stationary-condition-infinite}
     B(\alpha_{t})^{\top}\bar{Y}_{\infty}(t)+ D(\alpha_{t})^{\top}\bar{Z}_{\infty}(t)+S(\alpha_{t})\bar{X}_{\infty}(t)+R(\alpha_{t})\bar{u}_{\infty}(t)=0,\quad a.e.\quad a.s..
  \end{equation}
    \item[(ii)] If the following coupled algebra Riccati equations (CAREs)
    \begin{equation}\label{CAREs-infinite}
    \mathcal{M}(\mathbf{P_{\infty}},i)-\mathcal{L}(\mathbf{P_{\infty}},i) \mathcal{N}(\mathbf{P_{\infty}},i)^{-1} \mathcal{L}(\mathbf{P_{\infty}},i)^{\top} = 0,
    \end{equation}
    admits a solution $\mathbf{P_{\infty}}\triangleq (P_{\infty}(1),\cdots, P_{\infty}(L))\in\mathcal{D}(\mathbb{S}^{n})$ such that
    \begin{equation}\label{Theta-infinity}
     \mathbf{\Theta}\in \mathcal{H}[A,C;B,D]_{\alpha},\quad \text{with}\quad \Theta(i)\triangleq -\mathcal{N}(\mathbf{P_{\infty}},i)^{-1} \mathcal{L}(\mathbf{P_{\infty}},i)^{\top},\quad i\in\mathcal{S},
    \end{equation}
    then for any initial pair $(x,i)$,
  \begin{equation}\label{closed-loop-representation-infty}
  \bar{u}_{\infty}(t)\triangleq \Theta_{\infty}(\alpha_{t})\bar{X}_{\infty}(t;x,i),\quad t>0,
 \end{equation}
    with $\bar{X}_{\infty}(\cdot;x,i)$ being the solution of the following closed-loop system:
  \begin{equation}\label{closed-loop-state-infinite}
   \left\{
   \begin{aligned}
   &d\bar{X}_{\infty}(t)=\left[A(\alpha_{t})+B(\alpha_{t})\Theta_{\infty}(\alpha_{t})\right]\bar{X}_{\infty}(t)dt
   +\left[C(\alpha_{t})+D(\alpha_{t})\Theta_{\infty}(\alpha_{t})\right]\bar{X}_{\infty}(t)dW(t)\\
   &\bar{X}_{\infty}(0)=x,\quad \alpha_{0}=i,
   \end{aligned}
   \right.
  \end{equation}
    is an open-loop saddle strategy of Problem (M-ZLQ)$_{\infty}$ for initial pair $(x,i)$.
  \end{description}
\end{theorem}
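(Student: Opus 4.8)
The plan is to prove the two assertions in turn, obtaining (ii) from (i) through an explicit construction, since (i) is the infinite-horizon analogue of the familiar variational characterisation of a saddle point. For part (i), I would first observe that under (A1)--(A2) one has $\mathcal{U}_{ad}(x,i)=\mathcal{U}_1[0,\infty)\times\mathcal{U}_2[0,\infty)$, that for each fixed $u_2(\cdot)$ the map $u_1(\cdot)\mapsto J_\infty(x,i;u_1,u_2)$ is a uniformly convex quadratic functional on $\mathcal{U}_1[0,\infty)$ --- its purely quadratic part being $u_1\mapsto J_\infty(0,i;u_1,0)\ge\delta\,\mathbb{E}\int_0^\infty|u_1(t)|^2\,dt$ by (A1) --- and that for each fixed $u_1(\cdot)$ the map $u_2(\cdot)\mapsto J_\infty(x,i;u_1,u_2)$ is uniformly concave. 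Consequently $(\bar u_{1,\infty},\bar u_{2,\infty})$ is a saddle point if and only if $\bar u_{1,\infty}$ minimises $u_1\mapsto J_\infty(x,i;u_1,\bar u_{2,\infty})$ and $\bar u_{2,\infty}$ maximises $u_2\mapsto J_\infty(x,i;\bar u_{1,\infty},u_2)$, and each of these strictly convex (resp.\ concave) quadratic functionals is characterised by the vanishing of its G\^ateaux derivative. To compute that derivative I would bring in the adjoint triple $(\bar Y_\infty,\bar Z_\infty,\bar{\mathbf{\Gamma}}_\infty)$ solving the BSDE in \eqref{FBSDEs-infinite}, with the requirement $\bar Y_\infty\in L^2_{\mathbb{F}}(\mathbb{R}^n)$ replacing a terminal condition, apply It\^o's formula to $\langle\bar Y_\infty,\widehat X\rangle$ where $\widehat X$ is the state variation produced by a perturbation $u_k\to\bar u_{k,\infty}+\epsilon v_k$, and use $\widehat X(0)=0$ together with the transversality $\mathbb{E}\langle\bar Y_\infty(T),\widehat X(T)\rangle\to0$ (valid because $\bar Y_\infty,\widehat X\in L^2_{\mathbb{F}}(\mathbb{R}^n)$); this reduces $\left.\tfrac{d}{d\epsilon}J_\infty(x,i;\bar u_{1,\infty}+\epsilon v_1,\bar u_{2,\infty})\right|_{\epsilon=0}$ to a positive multiple of $\mathbb{E}\int_0^\infty\langle B_1^\top\bar Y_\infty+D_1^\top\bar Z_\infty+S_1\bar X_\infty+R_{11}\bar u_{1,\infty}+R_{12}\bar u_{2,\infty},\,v_1\rangle\,dt$, with the analogous identity for the $v_2$-perturbation, so that requiring both to vanish for all $v_1,v_2$ is exactly the stationary condition \eqref{stationary-condition-infinite}. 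Conversely, since $J_\infty$ is quadratic in the controls its Taylor expansion at $(\bar u_{1,\infty},\bar u_{2,\infty})$ is exact, so \eqref{stationary-condition-infinite} gives $J_\infty(x,i;u_1,\bar u_{2,\infty})-V_\infty(x,i)=J_\infty(0,i;u_1-\bar u_{1,\infty},0)\ge0$ and $J_\infty(x,i;\bar u_{1,\infty},u_2)-V_\infty(x,i)=J_\infty(0,i;0,u_2-\bar u_{2,\infty})\le0$, i.e.\ the two saddle inequalities.

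For part (ii), since $\mathbf{\Theta}\in\mathcal{H}[A,C;B,D]_\alpha$ the closed-loop system \eqref{closed-loop-state-infinite} is $L^2$-stable, so $\bar X_\infty(\cdot;x,i)\in L^2_{\mathbb{F}}(\mathbb{R}^n)$ and therefore $\bar u_\infty=\Theta_\infty(\alpha)\bar X_\infty\in\mathcal{U}_{ad}(x,i)$. I would then set $\bar Y_\infty(t)\triangleq P_\infty(\alpha_t)\bar X_\infty(t)$ and apply the It\^o formula for Markov-modulated processes; writing $A_\Theta\triangleq A+B\Theta_\infty$ and $C_\Theta\triangleq C+D\Theta_\infty$, this produces dynamics for $\bar Y_\infty$ with diffusion coefficient $\bar Z_\infty(t)=P_\infty(\alpha_t)C_\Theta(\alpha_t)\bar X_\infty(t)$, jump coefficient $\bar\Gamma_{\infty,j}(t)=[P_\infty(j)-P_\infty(\alpha_t)]\bar X_\infty(t)$, and drift $-\big[\mathcal{M}(\mathbf{P_\infty},\alpha_t)+\mathcal{L}(\mathbf{P_\infty},\alpha_t)\Theta_\infty(\alpha_t)\big]\bar X_\infty(t)$, the coupling term $\sum_{j}\pi_{\alpha_t j}P_\infty(j)$ entering through the compensator of the chain. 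By $\Theta_\infty(\alpha_t)=-\mathcal{N}(\mathbf{P_\infty},\alpha_t)^{-1}\mathcal{L}(\mathbf{P_\infty},\alpha_t)^\top$ and the CAREs \eqref{CAREs-infinite}, this drift equals $-\big[A(\alpha_t)^\top\bar Y_\infty+C(\alpha_t)^\top\bar Z_\infty+Q(\alpha_t)\bar X_\infty+S(\alpha_t)^\top\bar u_\infty\big]$, so $(\bar X_\infty,\bar Y_\infty,\bar Z_\infty,\bar{\mathbf{\Gamma}}_\infty)$ solves the FBSDEs \eqref{FBSDEs-infinite}, all three processes lying in the required spaces because $\bar X_\infty\in L^2_{\mathbb{F}}(\mathbb{R}^n)$ and $\mathbf{P_\infty}$ is bounded. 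Finally $B^\top\bar Y_\infty+D^\top\bar Z_\infty+S\bar X_\infty+R\bar u_\infty=\big[\mathcal{L}(\mathbf{P_\infty},\alpha_t)^\top+\mathcal{N}(\mathbf{P_\infty},\alpha_t)\Theta_\infty(\alpha_t)\big]\bar X_\infty=0$, so the stationary condition \eqref{stationary-condition-infinite} holds and part (i) yields that $\bar u_\infty$ is an open-loop saddle strategy for $(x,i)$.

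I expect the main obstacles to lie on the infinite-horizon and regime-switching side rather than in the algebra. First, one must set up the adjoint BSDE in \eqref{FBSDEs-infinite} carefully over $[0,\infty)$, with square-integrability substituting for a terminal condition, and rigorously justify the transversality $\mathbb{E}\langle\bar Y_\infty(T),\widehat X(T)\rangle\to0$ used in the integration-by-parts step of part (i). Second, one must get the jump/compensator bookkeeping in the It\^o formula for $P_\infty(\alpha_t)\bar X_\infty(t)$ exactly right, so that the coupling term $\sum_j\pi_{ij}P_\infty(j)$ appears inside $\mathcal{M}$ with the correct sign and $\bar{\mathbf{\Gamma}}_\infty$ is correctly identified. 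Once these technical points are secured, the remaining verifications are the purely algebraic identities displayed above.
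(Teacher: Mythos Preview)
The paper does not supply its own proof of this theorem: immediately before the statement it says the result ``can be found in \cite{My-paper-2024-infinite-M-ZLQ}'' and no argument is given. Your proposal is therefore not competing with a proof in the paper but rather filling in what the paper imports from an earlier reference.

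That said, your plan is the standard and correct one, and it matches what one finds in the cited literature: part (i) is the variational characterisation of a saddle via the G\^ateaux derivative and the adjoint BSDE, and part (ii) is the Riccati verification, taking $\bar Y_\infty(t)=P_\infty(\alpha_t)\bar X_\infty(t)$, reading off $\bar Z_\infty$ and $\bar{\mathbf\Gamma}_\infty$ from the It\^o formula, and checking that the CAREs collapse the drift to the required BSDE form while $\Theta_\infty=-\mathcal N^{-1}\mathcal L^\top$ kills the stationarity expression. Two small remarks. First, in the converse direction of (i) you write $V_\infty(x,i)$ before the saddle property has been established; it should read $J_\infty(x,i;\bar u_{1,\infty},\bar u_{2,\infty})$ at that stage. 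Second, your transversality justification ``valid because $\bar Y_\infty,\widehat X\in L^2_{\mathbb F}(\mathbb R^n)$'' is not quite enough on its own: square-integrability in time does not by itself force $\mathbb E\langle\bar Y_\infty(T),\widehat X(T)\rangle\to0$. The paper handles exactly this point in Remark~\ref{rmk-Y-infinte}, which invokes \cite[Lemma~2.8]{My-paper-2025-infinite-M-SLQ} to get $\mathbb E|\bar Y_\infty(T)|^2\to0$; combined with the uniform bound on $\mathbb E|\widehat X(T)|^2$ from Proposition~\ref{prop-AC-estimation}, Cauchy--Schwarz then gives the transversality you need. You already flagged this as a technical concern, so this is just a pointer to where the missing ingredient lives.
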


\begin{remark}\label{rmk-Y-infinte}
  Suppose that the control pair $\bar{u}_{\infty}(\cdot)$ in Theorem \ref{thm-open-loop-solvable-infty} is an open-loop saddle strategy for the initial pair $(x,i)$. Let $\varphi(\cdot)\triangleq Q(\alpha(\cdot))\bar{X}_{\infty}(\cdot)+S(\alpha(\cdot))^{\top}\bar{u}_{\infty}(\cdot)$ and by Proposition \ref{prop-AC-estimation}, we have $\varphi(\cdot)\in L_{\mathbb{F}}^{2}(\mathbb{R}^{n})$.
  Thus, equation (22) in the proof of \cite[Lemma 2.8]{My-paper-2025-infinite-M-SLQ} implies that the $L^{2}$-stable adapted solution of \eqref{FBSDEs-infinite} satisfies the following property:
  \begin{equation}\label{Y-limite-propety}
    \lim_{t\rightarrow\infty}\mathbb{E} \left|\bar{Y}_{\infty}(t)\right|^{2}=0.
  \end{equation}
\end{remark}

\begin{remark}\label{rmk-question}\rm
We note that although preliminary solutions to Problem (M-ZLQ)$_{T}$ and (M-ZLQ)$_{\infty}$ were established in our earlier work \cite{My-paper-2024-finite-M-ZLQ,My-paper-2024-infinite-M-ZLQ}, several substantive issues persist that merit deeper analysis:
\begin{description}
  \item[Q1.] Does a unique open-loop saddle strategy exist for Problems (M-ZLQ)$_{T}$ and (M-ZLQ)$_{\infty}$ for every initial pair $(x,i)\in\mathbb{R}^{n}\times \mathcal{S}$?
  \item[Q2.] Do the CDREs \eqref{CDREs-T} admit a unique strongly regular solution $\mathbf{P_{T}}(\cdot)\in\mathcal{D}\left(C([0,T];\mathbb{S}^{n})\right)$?
  \item[Q3.] Do the CAREs \eqref{CAREs-infinite} admit a solution $\mathbf{P_{\infty}}\in\mathcal{D}(\mathbb{S}^{n})$ satisfying condition \eqref{Theta-infinity}?
\end{description}
\end{remark}

To address question \textbf{Q1} in Remark \ref{rmk-question}, we re-examine the performance functional through Hilbert space operators. Consider the following SDE:
 \begin{equation}\label{state-k}
   \left\{
   \begin{aligned}
   dX_{k}(t)&=\left[A(\alpha_{t})X_{k}(t)+B_{k}(\alpha_{t})u_{k}(t)\right]dt+\left[C(\alpha_{t})X_{k}(t)+D_{k}(\alpha_{t})u_{k}(t)\right]dW(t),\qquad t\geq0,\\
   X_{k}(0)&=0,\quad \alpha_{0}=i,\quad k=1,2,
   \end{aligned}
   \right.
 \end{equation}
 and
  \begin{equation}\label{state-0}
   \left\{
   \begin{aligned}
   dX_{0}(t)&=A(\alpha_{t})X_{0}(t)dt+C(\alpha_{t})X_{0}(t)dW(t),\qquad t\geq0,\\
   X_{0}(0)&=x,\quad \alpha_{0}=i.
   \end{aligned}
   \right.
 \end{equation}
Since the system $[A,C]_{\alpha}$ is $L^{2}$-stable, Proposition \ref{prop-AC-estimation} guarantees a constant $K > 0$ (independent of $T$) such that the solution $X_{k}(\cdot)$ to \eqref{state-k} satisfies:
\begin{equation}\label{state-k-estimate}
\mathbb{E}\int_{0}^{T}\left|X_{k}(t)\right|^{2}dt\leq K\mathbb{E}\int_{0}^{T}\left|u_{k}(t)\right|^{2}dt,\quad \forall u_{k}(\cdot)\in \mathcal{U}_{k}[0,T],\quad T>0,\quad k=1,2,
\end{equation}
and the solution $X_{0}(\cdot)$ to \eqref{state-0} satisfies
\begin{equation}\label{state-0-estimate}
\mathbb{E}\int_{0}^{T}\left|X_{0}(t)\right|^{2}dt\leq K\left|x\right|^{2},\quad \forall  T>0.
\end{equation}

We define
\begin{equation}\label{operator}
\begin{array}{lll}
 \big[\mathcal{L}_{k,T}^{\alpha}u_{k}\big](\cdot)\triangleq X_{k}(\cdot),& \widehat{\mathcal{L}}_{k,T}^{\alpha}u_{k}\triangleq X_{k}(T),& k=1,2,\\[3mm]
 \big[\mathcal{N}_{T}^{\alpha}x\big](\cdot)\triangleq X_{0}(\cdot),& \widehat{\mathcal{N}}_{T}^{\alpha}x\triangleq X_{0}(T).
  \end{array}
\end{equation}
Clearly, for $k=1,2$,
$$
\mathcal{L}_{k,T}^{\alpha}: \mathcal{U}_{k}[0,T]\rightarrow L_{\mathbb{F}}^{2}([0,T],\mathbb{R}^{n}), \quad
\widehat{\mathcal{L}}_{k,T}^{\alpha}: \mathcal{U}_{k}[0,T]\rightarrow  L_{\mathcal{F}_{T}}^{2}(\Omega,\mathbb{R}^{n}),
$$
and
$$
\mathcal{N}_{T}^{\alpha}: \mathbb{R}^{n}\rightarrow L_{\mathbb{F}}^{2}([0,T],\mathbb{R}^{n}), \quad
\widehat{\mathcal{N}}_{T}^{\alpha}: \mathbb{R}^{n}\rightarrow L_{\mathcal{F}_{T}}^{2}(\Omega,\mathbb{R}^{n}),
$$
are linear operators and uniformly bounded in $T$. Based on these notations and the linearity of \eqref{state-k} and \eqref{state-0}, the state process defined in \eqref{ZLQ-state} can be decomposed into
$$
X(\cdot)=X_{0}(\cdot)+X_{1}(\cdot)+X_{2}(\cdot)=\big[\mathcal{N}_{T}^{\alpha}x\big](\cdot)+\big[\mathcal{L}_{1,T}^{\alpha}u_{1}\big](\cdot)+\big[\mathcal{L}_{2,T}^{\alpha}u_{2}\big](\cdot),
$$
and in particular,
$$
X(T)=X_{0}(T)+X_{1}(T)+X_{2}(T)=\widehat{\mathcal{N}}_{T}^{\alpha}x+\widehat{\mathcal{L}}_{1,T}^{\alpha}u_{1}+\widehat{\mathcal{L}}_{2,T}^{\alpha}u_{2}.
$$
Denote by $\mathcal{A}^{*}$ the adjoint operator of a linear operator $\mathcal{A}$. Then the performance functional \eqref{ZLQ-performance} can be represented as follows:
\begin{equation}\label{performance-functional-Hilbert}
J_{T}(x,i;u_{1}(\cdot),u_{2}(\cdot))=\big<\mathcal{M}_{T}^{\alpha}u,u\big>+2\big<\mathcal{K}_{T}^{\alpha}x,u\big>+\big<\mathcal{O}_{T}^{\alpha}x,x\big>,
\end{equation}
where $u(\cdot)=(u_{1}(\cdot)^{\top},u_{2}(\cdot)^{\top})^{\top}$ and
\begin{equation}\label{performance-functional-operator-1}
\mathcal{M}_{T}^{\alpha}\triangleq \left(
\begin{matrix}
\mathcal{M}_{11,T}^{\alpha} & \mathcal{M}_{12,T}^{\alpha}\\ \mathcal{M}_{21,T}^{\alpha} & \mathcal{M}_{22,T}^{\alpha}
\end{matrix}
\right),\quad
\mathcal{K}_{T}^{\alpha}\triangleq \left(
\begin{matrix}
\mathcal{K}_{1,T}^{\alpha} \\ \mathcal{K}_{2,T}^{\alpha}
\end{matrix}
\right),\quad
\mathcal{O}_{T}^{\alpha}\triangleq (\mathcal{N}_{T}^{\alpha})^{*}Q(\alpha)\mathcal{N}_{T}^{\alpha},
\end{equation}
with
\begin{equation}\label{performance-functional-operator-2}
\begin{aligned}
&\mathcal{M}_{ij,T}^{\alpha}\triangleq R_{ij}(\alpha)+S_{i}(\alpha)\mathcal{L}_{j,T}^{\alpha}+(\mathcal{L}_{j,T}^{\alpha})^{*}S_{i}(\alpha)^{\top}+(\mathcal{L}_{i,T}^{\alpha})^{*}Q(\alpha)\mathcal{L}_{j,T}^{\alpha},\quad i,j=1,2\\
&\mathcal{K}_{i,T}^{\alpha}\triangleq (\mathcal{L}_{i,T}^{\alpha})^{*}Q(\alpha)\mathcal{N}_{T}^{\alpha}+S_{i}(\alpha)\mathcal{N}_{T}^{\alpha}, \quad i=1,2.
\end{aligned}
\end{equation}
Note that the linear operators
$$
\mathcal{M}_{T}^{\alpha}: \,\,\mathcal{U}[0,T]\rightarrow  \mathcal{U}[0,T],\quad \mathcal{K}_{T}^{\alpha}:\,\, \mathbb{R}^{n}\rightarrow  \mathcal{U}[0,T],\quad \mathcal{O}_{T}^{\alpha}\in\mathbb{S}^{n},
$$
are all uniformly bounded in $T$ , and $\mathcal{M}_{T}^{\alpha}$ is self-adjoint.

Similar to the previous discussion, replacing the interval $[0, T ]$ by $[0, \infty)$, we can derive a similar operator representation for the performance functional $J_{\infty}(x,i;u_{1}(\cdot),u_{2}(\cdot))$:
\begin{equation}\label{performance-functional-Hilbert-infinite}
J_{\infty}(x,i;u_{1}(\cdot),u_{2}(\cdot))=\big<\mathcal{M}_{\infty}^{\alpha}u,u\big>+2\big<\mathcal{K}_{\infty}^{\alpha}x,u\big>+\big<\mathcal{O}_{\infty}^{\alpha}x,x\big>,
\end{equation}
where $\mathcal{O}_{\infty}^{\alpha}\in\mathbb{S}^{n}$, and the linear operators
$$
\mathcal{M}_{\infty}^{\alpha}\triangleq \left(
\begin{matrix}
\mathcal{M}_{11,\infty}^{\alpha} & \mathcal{M}_{12,\infty}^{\alpha}\\ \mathcal{M}_{21,\infty}^{\alpha} & \mathcal{M}_{22,\infty}^{\alpha}
\end{matrix}
\right):\,\, \mathcal{U}_{ad}(x,i)\rightarrow  \mathcal{U}_{ad}(x,i),\quad
\mathcal{K}_{\infty}^{\alpha}\triangleq \left(
\begin{matrix}
\mathcal{K}_{1,\infty}^{\alpha} \\ \mathcal{K}_{2,\infty}^{\alpha}
\end{matrix}
\right):\,\, \mathbb{R}^{n}\rightarrow \mathcal{U}_{ad}(x,i),
$$
are bounded with $\mathcal{M}_{\infty}^{\alpha}$ being self-adjoint.

Using the functional representations \eqref{performance-functional-Hilbert} and \eqref{performance-functional-Hilbert-infinite}, we derive the following existence and uniqueness result.

\begin{theorem}\label{thm-unique-solvability}
Let (A1)–(A2) hold. Then for each initial pair  $(x,i)$,  the following statements hold:
\begin{description}
  \item[(i)] Problem (M-ZLQ)$_{T}$ admits a unique open-loop saddle strategy for any $T>0$;
  \item[(ii)] Problem (M-ZLQ)$_{\infty}$  admits a unique open-loop saddle strategy.
\end{description}
\end{theorem}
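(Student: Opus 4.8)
The plan is to deduce both assertions directly from the Hilbert-space representations \eqref{performance-functional-Hilbert} and \eqref{performance-functional-Hilbert-infinite}, the crux being that assumption (A1) forces the self-adjoint operators $\mathcal{M}_T^\alpha$ and $\mathcal{M}_\infty^\alpha$ to be boundedly invertible. I will carry out the finite-horizon case in detail; the infinite-horizon case is word-for-word the same, with $\mathcal{U}_{ad}(x,i)=\mathcal{U}_1[0,\infty)\times\mathcal{U}_2[0,\infty)$ (which holds by (A2), and on which $\mathcal{M}_\infty^\alpha$, $\mathcal{K}_\infty^\alpha$ are bounded) playing the role of $\mathcal{U}[0,T]$.

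First I would restate (A1) at the operator level: taking $x=0$ and $u_2=0$ in \eqref{performance-functional-Hilbert} gives $J_T(0,i;u_1,0)=\langle\mathcal{M}_{11,T}^\alpha u_1,u_1\rangle$, so \eqref{uniformly-convex-concave-condition} says precisely that $\mathcal{M}_{11,T}^\alpha\geq\delta I$ on $\mathcal{U}_1[0,T]$ and $\mathcal{M}_{22,T}^\alpha\leq-\delta I$ on $\mathcal{U}_2[0,T]$ (both being self-adjoint since $Q(\alpha)$ and $R_{kk}(\alpha)$ are symmetric). Then $\mathcal{M}_{11,T}^\alpha$ is boundedly invertible with $(\mathcal{M}_{11,T}^\alpha)^{-1}>0$, so its Schur complement $\mathcal{S}_T\triangleq\mathcal{M}_{22,T}^\alpha-\mathcal{M}_{21,T}^\alpha(\mathcal{M}_{11,T}^\alpha)^{-1}\mathcal{M}_{12,T}^\alpha$ satisfies $\mathcal{S}_T\leq\mathcal{M}_{22,T}^\alpha\leq-\delta I$ (using $\mathcal{M}_{21,T}^\alpha=(\mathcal{M}_{12,T}^\alpha)^*$, whence $\mathcal{M}_{21,T}^\alpha(\mathcal{M}_{11,T}^\alpha)^{-1}\mathcal{M}_{12,T}^\alpha\geq0$), hence is boundedly invertible as well. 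The block factorization
$$
\mathcal{M}_T^\alpha=\begin{pmatrix} I & 0\\ \mathcal{M}_{21,T}^\alpha(\mathcal{M}_{11,T}^\alpha)^{-1} & I\end{pmatrix}\begin{pmatrix}\mathcal{M}_{11,T}^\alpha & 0\\ 0 & \mathcal{S}_T\end{pmatrix}\begin{pmatrix} I & (\mathcal{M}_{11,T}^\alpha)^{-1}\mathcal{M}_{12,T}^\alpha\\ 0 & I\end{pmatrix}
$$
then exhibits $\mathcal{M}_T^\alpha$ as a composition of boundedly invertible operators, so $(\mathcal{M}_T^\alpha)^{-1}$ exists and is bounded.

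With invertibility in hand I would set $\bar u_T\triangleq-(\mathcal{M}_T^\alpha)^{-1}\mathcal{K}_T^\alpha x$, with components $(\bar u_{1,T},\bar u_{2,T})$, so that $\mathcal{M}_T^\alpha\bar u_T+\mathcal{K}_T^\alpha x=0$. Freezing $u_2=\bar u_{2,T}$, the map $u_1\mapsto J_T(x,i;u_1,\bar u_{2,T})$ is a quadratic functional whose quadratic part is governed by $\mathcal{M}_{11,T}^\alpha\geq\delta I$ and which has $\bar u_{1,T}$ as a critical point, so completing the square gives $J_T(x,i;u_1,\bar u_{2,T})=J_T(x,i;\bar u_{1,T},\bar u_{2,T})+\langle\mathcal{M}_{11,T}^\alpha(u_1-\bar u_{1,T}),u_1-\bar u_{1,T}\rangle\geq J_T(x,i;\bar u_{1,T},\bar u_{2,T})$; the mirror computation in $u_2$, using $\mathcal{M}_{22,T}^\alpha\leq-\delta I$, gives the reverse inequality, which is exactly \eqref{ZLQ-problem}, so $\bar u_T$ is an open-loop saddle strategy. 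Conversely, any saddle strategy $\bar u_T'$ makes $\bar u_{1,T}'$ a minimizer of a differentiable convex functional and $\bar u_{2,T}'$ a maximizer of a differentiable concave one, so the two partial derivatives vanish, i.e. $\mathcal{M}_T^\alpha\bar u_T'+\mathcal{K}_T^\alpha x=0$, whence $\bar u_T'=\bar u_T$; this proves (i). Part (ii) follows by running the same argument over $[0,\infty)$.

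Everything above is essentially routine operator bookkeeping; the one point that needs a moment's care is the uniqueness half, where one must observe that a saddle strategy — being a one-sided extremum of a differentiable quadratic functional in each variable separately — necessarily solves the stationary relation $\mathcal{M}_T^\alpha\bar u+\mathcal{K}_T^\alpha x=0$, and it is the combination of this with the invertibility of $\mathcal{M}_T^\alpha$ that yields uniqueness rather than merely exhibiting one saddle strategy.
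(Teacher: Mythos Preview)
Your proposal is correct and follows essentially the same approach as the paper: both use the Hilbert-space representations \eqref{performance-functional-Hilbert}--\eqref{performance-functional-Hilbert-infinite}, translate (A1) into the coercivity conditions $\mathcal{M}_{11}^\alpha\geq\delta I$ and $-\mathcal{M}_{22}^\alpha\geq\delta I$, invoke the Schur complement to obtain bounded invertibility of $\mathcal{M}^\alpha$, and then identify the unique saddle strategy as $-(\mathcal{M}^\alpha)^{-1}\mathcal{K}^\alpha x$. The only cosmetic differences are that you treat the finite-horizon case in detail (the paper does the infinite one), and you display the block LDU factorization of $\mathcal{M}_T^\alpha$ whereas the paper writes out $(\mathcal{M}_\infty^\alpha)^{-1}$ explicitly.
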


\begin{proof}
Here, we only prove item (ii) since item (i) can be derived similarly. By definition, a pair \( \left(\bar{u}_{1,\infty}(\cdot), \bar{u}_{2,\infty}(\cdot)\right) \) is an open-loop saddle strategy if and only if
\begin{equation}\label{proof-unique-1}
\begin{aligned}
J_{\infty}\left(x,i ; \bar{u}_{1,\infty}(\cdot), \bar{u}_{2,\infty}(\cdot)+\varepsilon v_{2}(\cdot)\right) \leqslant  J_{\infty}\left(x,i ; \bar{u}_{1,\infty}(\cdot), \bar{u}_{2,\infty}(\cdot)\right) \leqslant J_{\infty}\left(x ,i; \bar{u}_{1,\infty}(\cdot)+\varepsilon v_{1}(\cdot), \bar{u}_{2,\infty}(\cdot)\right), \\
 \forall \varepsilon \in \mathbb{R}, \forall\left(v_{1}(\cdot), v_{2}(\cdot)\right) \in \mathscr{U}_{1}[0, \infty) \times \mathscr{U}_{2}[0, \infty) .
\end{aligned}
\end{equation}
From \eqref{performance-functional-Hilbert-infinite}, the performance functional $J_{\infty}\left(x,i ; u_{1}(\cdot), u_{2}(\cdot)\right)$ can be expressed as:
\begin{equation}\label{proof-unique-2}
\begin{aligned}
J_{\infty}\left(x,i ; u_{1}(\cdot), u_{2}(\cdot)\right)= & \left\langle\mathcal{M}_{11,\infty}^{\alpha} u_{1}, u_{1}\right\rangle+\left\langle\mathcal{M}_{22,\infty}^{\alpha} u_{2}, u_{2}\right\rangle+2\left\langle u_{1}, \mathcal{M}_{12,\infty}^{\alpha} u_{2}\right\rangle \\
& +2\left\langle u_{1}, \mathcal{K}_{1,\infty}^{\alpha} x\right\rangle+2\left\langle u_{2}, \mathcal{K}_{2,\infty}^{\alpha} x\right\rangle+\langle\mathcal{O}_{\infty}^{\alpha} x, x\rangle
\end{aligned}
\end{equation}
Therefore, 
\eqref{proof-unique-1} is equivalent to
\begin{equation}\label{proof-unique-3}
\left\{\begin{array}{ll}
\varepsilon^{2}\left\langle\mathcal{M}_{11,\infty}^{\alpha} v_{1}, v_{1}\right\rangle+2 \varepsilon\left\langle v_{1}, \mathcal{M}_{11,\infty}^{\alpha} \bar{u}_{1}+\mathcal{M}_{12,\infty}^{\alpha} \bar{u}_{2}+\mathcal{K}_{1,\infty}^{\alpha} x\right\rangle \geqslant 0, & \forall \varepsilon \in \mathbb{R}, \quad \forall v_{1}(\cdot) \in \mathscr{U}_{1}[0, \infty), \\
\varepsilon^{2}\left\langle\mathcal{M}_{22,\infty}^{\alpha} v_{2}, v_{2}\right\rangle+2 \varepsilon\left\langle v_{2}, \mathcal{M}_{11,\infty}^{\alpha} \bar{u}_{1}+\mathcal{M}_{12,\infty}^{\alpha} \bar{u}_{2}+\mathcal{K}_{1,\infty}^{\alpha} x\right\rangle \leqslant 0, & \forall \varepsilon \in \mathbb{R}, \quad \forall v_{2}(\cdot) \in \mathscr{U}_{2}[0, \infty) .
\end{array}\right.
\end{equation}

Observe that assumption (A1) is equivalent to the uniform positivity of \( \mathcal{M}_{11, \infty}^{\alpha}\) and \( -\mathcal{M}_{22, \infty}^{\alpha} \), that is, there exists a constant \( \delta>0 \) such that
\[
\left\langle\mathcal{M}_{11, \infty}^{\alpha} u_{1}, u_{1}\right\rangle \geqslant \delta\left\|u_{1}(\cdot)\right\|^{2}, \quad\left\langle\mathcal{M}_{22, \infty}^{\alpha} u_{2}, u_{2}\right\rangle \leqslant-\delta\left\|u_{2}(\cdot)\right\|^{2},
\]
for all \( u_{i}(\cdot) \in \mathscr{U}_{i}[0, \infty), i=1,2 \). Consequently, the equation \eqref{proof-unique-3} is in turn equivalent to
\[
\left\{\begin{array}{l}
\mathcal{M}_{11,\infty}^{\alpha} \bar{u}_{1,\infty}+\mathcal{M}_{12,\infty}^{\alpha} \bar{u}_{2,\infty}+\mathcal{K}_{1,\infty}^{\alpha} x=0, \\
\mathcal{M}_{11,\infty}^{\alpha} \bar{u}_{1,\infty}+\mathcal{M}_{12,\infty}^{\alpha} \bar{u}_{2,\infty}+\mathcal{K}_{1,\infty}^{\alpha} x=0.
\end{array}\right.
\]

Note that the operator \( \mathcal{M}_{\infty}^{\alpha} \) is invertible and the inverse \( (\mathcal{M}_{\infty}^{\alpha})^{-1} \) is given by
\[
(\mathcal{M}_{\infty}^{\alpha})^{-1} =\left(\begin{array}{cc}
(\mathcal{M}_{11,\infty}^{\alpha})^{-1}+\mathcal{H}_{\infty}^{\alpha} (\Phi_{\infty}^{\alpha})^{-1}(\mathcal{H}_{\infty}^{\alpha})^{*} & -\mathcal{H}_{\infty}^{\alpha} (\Phi_{\infty}^{\alpha})^{-1} \\
-(\Phi_{\infty}^{\alpha})^{-1}(\mathcal{H}_{\infty}^{\alpha})^{*} & (\Phi_{\infty}^{\alpha})^{-1}
\end{array}\right),
\]
where \( \Phi_{\infty}^{\alpha}\triangleq \mathcal{M}_{22,\infty}^{\alpha}-\mathcal{M}_{21,\infty}^{\alpha}(\mathcal{M}_{11,\infty}^{\alpha})^{-1} \mathcal{M}_{12,\infty}^{\alpha} \) is a negative (and thus invertible) operator, and $\mathcal{H}_{\infty}^{\alpha}\triangleq (\mathcal{M}_{11,\infty}^{\alpha})^{-1}\mathcal{M}_{12,\infty}^{\alpha}$. This implies that Problem (M-ZLQ)$_{\infty}$ admits a unique open-loop saddle strategy given by
\[
\bar{u}_{\infty}(\cdot)\triangleq \binom{\bar{u}_{1,\infty}(\cdot)}{\bar{u}_{2,\infty}(\cdot)}=-(\mathcal{M}_{\infty}^{\alpha})^{-1} \mathcal{K}_{\infty}^{\alpha}x.
\]
\end{proof}

The following result establishes the uniqueness of a strongly regular solution to CDREs \eqref{CDREs-T}.
\begin{proposition}\label{prop-CDREs-unique-T}
Let (A1) hold. Then the CDREs \eqref{CDREs-T} admit a unique strongly regular solution $\mathbf{P_{T}}(\cdot)\in \mathcal{D}\left(C([0,T];\mathbb{S}^{n})\right)$.
\end{proposition}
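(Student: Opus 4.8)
The plan is to establish existence and uniqueness separately, both reduced to facts about the self-adjoint operators arising from the Hilbert-space representation in Theorem~\ref{thm-unique-solvability}. For existence, I would fix $T>0$ and, for each $t\in[0,T]$, consider the zero-sum SLQ game on the sub-horizon $[t,T]$ with zero terminal cost and initial condition $(x,i)$ at time $t$. Assumption (A1) is stated uniformly in the horizon, so it applies verbatim to the game on $[t,T]$; by Theorem~\ref{thm-unique-solvability}(i) this game is uniquely open-loop solvable, and by the dynamic-programming/completion-of-squares argument underlying Theorem~\ref{thm-T-open-loop-solvable}(ii)--(iii), its value function is quadratic, $V(t,x,i)=\langle P_T(t,i)x,x\rangle$, with $\mathbf{P_T}(\cdot)\in\mathcal{D}(C([0,T];\mathbb{S}^n))$ solving \eqref{CDREs-T}. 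The strong-regularity inequalities \eqref{strongly-regular-condition-T} come out of (A1): testing the sub-horizon performance functional against controls of the form $(u_1,0)$ resp. $(0,u_2)$ supported near $t$ and taking a shrinking-interval limit forces $\mathcal{N}_{11}(t;\mathbf{P_T},i)\geq\delta I$ and $\mathcal{N}_{22}(t;\mathbf{P_T},i)\leq-\delta I$; in particular $\mathcal{N}(t;\mathbf{P_T},i)$ is invertible, so the Riccati right-hand side is well defined along the solution and $\mathbf{P_T}(\cdot)$ is genuinely $C^1$. Much of this is already packaged in Theorem~\ref{thm-T-open-loop-solvable}, so existence is really a matter of invoking Theorem~\ref{thm-unique-solvability}(i) on each sub-horizon and assembling the values.

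For uniqueness, suppose $\mathbf{P_T}(\cdot)$ and $\widetilde{\mathbf{P}}_T(\cdot)$ are both strongly regular solutions of \eqref{CDREs-T}. The key observation is that a strongly regular solution is forced to coincide with the value function: given such a $\mathbf{P_T}$, define $\Theta_T$ by \eqref{Theta-T}; then for any initial pair $(t,x,i)$ and any admissible $(u_1,u_2)$ on $[t,T]$, Itô's formula applied to $s\mapsto\langle P_T(s,\alpha_s)X(s),X(s)\rangle$ along the state trajectory, together with the Riccati identity and completion of squares, yields
\begin{equation}\label{completion-of-squares-proposal}
J_{T}^{t}(x,i;u_1,u_2)=\langle P_T(t,i)x,x\rangle+\mathbb{E}\int_t^T\left\langle\mathcal{N}(s;\mathbf{P_T},\alpha_s)\big(u(s)-\Theta_T(s,\alpha_s)X(s)\big),u(s)-\Theta_T(s,\alpha_s)X(s)\right\rangle ds,
\end{equation}
where $J_T^t$ denotes the sub-horizon cost. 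The block structure of $\mathcal{N}$ — with $\mathcal{N}_{11}\geq\delta I$ and $\mathcal{N}_{22}\leq-\delta I$ — makes the quadratic form in \eqref{completion-of-squares-proposal} a nondegenerate saddle form in $(u_1,u_2)$, so the unique saddle point is $u=\Theta_T(\cdot,\alpha)X(\cdot)$ and the saddle value is exactly $\langle P_T(t,i)x,x\rangle$. Running the same computation for $\widetilde{\mathbf{P}}_T$ gives the same saddle value $\langle\widetilde P_T(t,i)x,x\rangle$ for every $(t,x,i)$; since $P_T(t,i),\widetilde P_T(t,i)\in\mathbb{S}^n$, equality of the quadratic forms forces $P_T(t,i)=\widetilde P_T(t,i)$ for all $t,i$. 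Hence the strongly regular solution is unique.

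The main obstacle is the derivation of \eqref{completion-of-squares-proposal} in the regime-switching setting and the verification that the saddle form it produces is genuinely nondegenerate. Two points need care: first, the Itô formula must account for the compensated jump martingale $\mathbf{\widetilde N}$ of the chain $\alpha$, which is exactly what produces the term $\sum_j\pi_{ij}P_T(t,j)$ inside $\mathcal{M}$, so one must check that the martingale parts have zero expectation (this is where $\mathbf{P_T}(\cdot)$ bounded and continuous, hence the integrands in $L^2$, is used); second, the sign conditions \eqref{strongly-regular-condition-T} give definiteness only on the diagonal blocks of $\mathcal{N}$, and to conclude that the mixed quadratic functional has a \emph{unique} saddle point one needs the Schur-complement argument — $\mathcal{N}_{22}-\mathcal{N}_{21}\mathcal{N}_{11}^{-1}\mathcal{N}_{12}\leq-\delta I$ — exactly as the operator $\Phi_\infty^\alpha$ is used in the proof of Theorem~\ref{thm-unique-solvability}. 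Once this pointwise saddle-form analysis is in place, translating "both solutions reproduce the value function, and the value function is unique" into $P_T\equiv\widetilde P_T$ is routine.
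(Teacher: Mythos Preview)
Your proposal is correct and follows essentially the same route as the paper: existence is taken from Theorem~\ref{thm-T-open-loop-solvable}(ii), and uniqueness is obtained by showing that any strongly regular solution produces the value function of the sub-horizon game on $[t,T]$, so that two such solutions must agree as quadratic forms for every $(t,x,i)$. The paper compresses your completion-of-squares derivation \eqref{completion-of-squares-proposal} into a one-line citation of \cite{My-paper-2024-finite-M-ZLQ}, whereas you spell out the It\^o step, the role of the generator term $\sum_j\pi_{ij}P_T(t,j)$, and the Schur-complement reasoning for the saddle form; but the underlying argument is identical.
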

\begin{proof}
  Suppose both $\mathbf{P_{T}}(\cdot)$ and $\mathbf{\widehat{P}_{T}}(\cdot)$ are strongly regular solutions to CDREs \eqref{CDREs-T}. Consider Problem (M-ZLQ)$_{T}$ over the time horizon $[t,T]$ with initial value $(t,x,i)\in [0,T]\times \mathbb{R}^n \times \mathcal{S}$. Using a similar analysis as in \cite{My-paper-2024-finite-M-ZLQ}, we can obtain its value function given by:
  $$
  V_{T}(t,x,i)=\langle P_{T}(t,i)x,x\rangle=\langle\widehat{P}_{T}(t,i)x,x\rangle,\quad \forall (t,x,i)\in[0,T]\times \mathbb{R}^{n}\times \mathcal{S}.
  $$
  Hence, by the arbitrariness of $(t,x,i)$, we have
  $$
  P_{T}(t,i)=\widehat{P}_{T}(t,i),\quad \forall (t,i)\in[0,T]\times \mathcal{S}.
  $$
  This completes the proof.
\end{proof}

We have addressed question \textbf{Q2} from Remark \ref{rmk-question}. Moreover, combining Theorems \ref{thm-T-open-loop-solvable}, \ref{thm-unique-solvability}, and Proposition \ref{prop-CDREs-unique-T}, we obtain the following result.

\begin{corollary}\label{coro-T}
 Let assumption $(A1)$ hold. Then the following statements are true:
\begin{description}
  \item[(i)] The CDREs \eqref{CDREs-T} admit a unique regular solution $\mathbf{P}_{T} \in \mathcal{D}\bigl(C([0,T];\mathbb{S}^{n})\bigr)$;
  \item[(ii)] For any initial pair $(x,i)$, Problem (M-ZLQ)$_{T}$ admits a unique open-loop saddle strategy $\bar{u}_{T}(\cdot)$ with closed-loop representation given by \eqref{closed-loop-representation-T};
  \item[(iii)] The value function of Problem (M-ZLQ)$_{T}$ is
    $$
      V_{T}(x,i) = \langle P_{T}(0,i)x, x\rangle, \quad \forall (x,i)\in\mathbb R^{n}\times\mathcal S.
    $$
\end{description}
\end{corollary}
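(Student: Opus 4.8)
The plan is to assemble Corollary~\ref{coro-T} purely from the three results already in hand, with no new analytic work; the content is bookkeeping about which earlier theorem supplies which clause. I would proceed as follows.

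\medskip

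\noindent\textbf{Step 1 (existence and uniqueness of the CDRE solution).} Proposition~\ref{prop-CDREs-unique-T} already gives, under (A1), that the CDREs \eqref{CDREs-T} admit a \emph{unique} strongly regular solution $\mathbf{P}_{T}(\cdot)\in\mathcal{D}\bigl(C([0,T];\mathbb{S}^{n})\bigr)$; but that proposition presupposes existence (it only shows two strongly regular solutions coincide). So first I would invoke Theorem~\ref{thm-T-open-loop-solvable}(i)--(ii) together with Theorem~\ref{thm-unique-solvability}(i): by Theorem~\ref{thm-unique-solvability}(i), Problem (M-ZLQ)$_{T}$ has a (unique) open-loop saddle strategy for every $(x,i)$, hence is open-loop solvable; then Theorem~\ref{thm-T-open-loop-solvable}(ii) produces a strongly regular solution of \eqref{CDREs-T}. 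Combined with the uniqueness from Proposition~\ref{prop-CDREs-unique-T}, this yields clause (i). (One should note the word ``regular'' in the corollary statement should read ``strongly regular'', matching \eqref{strongly-regular-condition-T}; I would keep the terminology consistent with the Remark following Theorem~\ref{thm-T-open-loop-solvable}.)

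\medskip

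\noindent\textbf{Step 2 (closed-loop representation of the saddle strategy).} With $\mathbf{P}_{T}(\cdot)$ now known to exist and be strongly regular, define $\Theta_{T}(t,i)$ by \eqref{Theta-T}; this is well defined because strong regularity \eqref{strongly-regular-condition-T} makes $\mathcal{N}(t;\mathbf{P}_{T},i)$ invertible. Theorem~\ref{thm-T-open-loop-solvable}(iii) then says that $\bar u_{T}(t)=\Theta_{T}(t,\alpha_t)\bar X_{T}(t;x,i)$, with $\bar X_{T}$ solving the closed-loop system \eqref{closed-loop-state-T}, is an open-loop saddle strategy; Theorem~\ref{thm-unique-solvability}(i) says it is the unique one. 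This gives clause (ii).

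\medskip

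\noindent\textbf{Step 3 (the value function).} For the identity $V_{T}(x,i)=\langle P_{T}(0,i)x,x\rangle$, I would reuse the computation already cited inside the proof of Proposition~\ref{prop-CDREs-unique-T}: running Problem (M-ZLQ)$_{T}$ on $[t,T]$ with initial datum $(t,x,i)$, a completion-of-squares / It\^o argument applied to $s\mapsto\langle P_{T}(s,\alpha_s)\bar X_{T}(s),\bar X_{T}(s)\rangle$ along the optimal state (using the CDRE \eqref{CDREs-T}, the generator term $\sum_j\pi_{ij}P_T(t,j)$ absorbing the Markov-chain martingale, and the stationarity condition \eqref{stationary-condition-T}) shows that $J_{T}(t,x,i;\bar u_{1,T},\bar u_{2,T})=\langle P_{T}(t,i)x,x\rangle$ and that this is simultaneously an upper bound over $u_1$ and a lower bound over $u_2$. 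Specializing to $t=0$ gives clause (iii). Since the excerpt explicitly says ``using a similar analysis as in \cite{My-paper-2024-finite-M-ZLQ}'' for exactly this value-function formula, I would simply cite that and not reproduce the It\^o expansion.

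\medskip

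\noindent The only genuine subtlety — the ``main obstacle'' — is Step~1: making sure the logical loop does not beg the question. Proposition~\ref{prop-CDREs-unique-T} does not by itself establish existence of $\mathbf P_T$, and Theorem~\ref{thm-T-open-loop-solvable}(ii) requires open-loop solvability as an input, which is precisely what Theorem~\ref{thm-unique-solvability}(i) supplies (and that theorem's proof uses only the Hilbert-space operator representation \eqref{performance-functional-Hilbert} plus (A1), independently of any Riccati equation). Once this ordering — unique saddle strategy $\Rightarrow$ open-loop solvability $\Rightarrow$ existence of strongly regular $\mathbf P_T$ $\Rightarrow$ (with Prop.~\ref{prop-CDREs-unique-T}) uniqueness — is spelled out, everything else is immediate citation.
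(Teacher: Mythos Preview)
Your assembly is correct and matches the paper's one-line proof, which simply cites Theorems~\ref{thm-T-open-loop-solvable}, \ref{thm-unique-solvability}, and Proposition~\ref{prop-CDREs-unique-T}. One minor point: the ``main obstacle'' you flag is not actually present, since Theorem~\ref{thm-T-open-loop-solvable}(ii) is stated as a direct consequence of (A1) alone---existence of the strongly regular $\mathbf{P}_T$ does not require open-loop solvability as a separate input---so the logical chain is already linear and your detour through Theorem~\ref{thm-unique-solvability}(i) to reach Step~1 is unnecessary (though harmless).
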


The analysis of question \textbf{Q3} from Remark \ref{rmk-question} is postponed to the next section, where we focus on the asymptotic properties of the open-loop saddle strategy for Problem (M-ZLQ)$_{T}$ as $T \to \infty$. As a byproduct, we establish the unique solvability of CAREs \eqref{CAREs-infinite}.

\section{Asymptotic properties of the open-loop saddle strategy}\label{section-4}
The following result establishes the convergence of Problem (M-ZLQ)$_{T}$ to Problem (M-ZLQ)$_{\infty}$ in an appropriate sense.
\begin{theorem}\label{thm-convergence}
Assume (A1)-(A2) hold. 
Let \( \left(\bar{u}_{1, T}(\cdot), \bar{u}_{2, T}(\cdot)\right) \) and \( \left(\bar{u}_{1,\infty}(\cdot), \bar{u}_{2,\infty}(\cdot)\right) \) denote the open-loop saddle strategies of Problem (M-ZLQ)$_{T}$ and Problem (M-ZLQ)$_{\infty}$, respectively. Then
\begin{equation}\label{strategy-convergence}
\lim _{T \rightarrow \infty} \mathbb{E} \int_{0}^{T}\left[\left|\bar{u}_{1, T}(t)-\bar{u}_{1,\infty}(t)\right|^{2}+\left|\bar{u}_{2, T}(t)-\bar{u}_{2,\infty}(t)\right|^{2}\right] d t=0.
\end{equation}
Consequently, for the corresponding state processes \( \bar{X}_{T}(\cdot) \) and \( \bar{X}_{\infty}(\cdot) \), we have
\begin{equation}\label{state-convergence}
\lim _{T \rightarrow \infty} \mathbb{E} \int_{0}^{T}\left|\bar{X}_{T}(t)-\bar{X}_{\infty}(t)\right|^{2} d t=0.
\end{equation}
\end{theorem}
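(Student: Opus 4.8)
The plan is to combine the Hilbert‑space representation \eqref{performance-functional-Hilbert}--\eqref{performance-functional-Hilbert-infinite}, the uniform convexity--concavity assumption (A1), and a tail‑decay argument. Write $\bar u_{k,\infty}^{r}(\cdot):=\bar u_{k,\infty}(\cdot)|_{[0,T]}\in\mathcal U_{k}[0,T]$ and $\bar u_{\infty}^{r}:=((\bar u_{1,\infty}^{r})^{\top},(\bar u_{2,\infty}^{r})^{\top})^{\top}$ for the restriction of the infinite‑horizon open‑loop saddle strategy. The key point is that $\bar u_{\infty}^{r}$ is an \emph{almost} saddle strategy for Problem (M-ZLQ)$_{T}$. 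Set $g_{1,T}:=\mathcal M_{11,T}^{\alpha}\bar u_{1,\infty}^{r}+\mathcal M_{12,T}^{\alpha}\bar u_{2,\infty}^{r}+\mathcal K_{1,T}^{\alpha}x$ and $g_{2,T}:=\mathcal M_{21,T}^{\alpha}\bar u_{1,\infty}^{r}+\mathcal M_{22,T}^{\alpha}\bar u_{2,\infty}^{r}+\mathcal K_{2,T}^{\alpha}x$, which measure the failure of $\bar u_{\infty}^{r}$ to solve the stationarity system for Problem (M-ZLQ)$_{T}$. Completing the square in $u_{1}$ (resp.\ in $u_{2}$) in \eqref{performance-functional-Hilbert} and using $\mathcal M_{11,T}^{\alpha}\ge\delta I$ (resp.\ $\mathcal M_{22,T}^{\alpha}\le-\delta I$), I would obtain, for all $(u_{1},u_{2})\in\mathcal U[0,T]$,
\begin{align*}
J_{T}(x,i;u_{1},\bar u_{2,\infty}^{r})&\ge J_{T}(x,i;\bar u_{\infty}^{r})-\tfrac1\delta\|g_{1,T}\|^{2},\\
J_{T}(x,i;\bar u_{1,\infty}^{r},u_{2})&\le J_{T}(x,i;\bar u_{\infty}^{r})+\tfrac1\delta\|g_{2,T}\|^{2},
\end{align*}
and hence, by the saddle property of $\bar u_{T}$, $\bigl|V_{T}(x,i)-J_{T}(x,i;\bar u_{\infty}^{r})\bigr|\le\delta^{-1}\max\{\|g_{1,T}\|^{2},\|g_{2,T}\|^{2}\}$.

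The next step is to show $\|g_{1,T}\|_{\mathcal U_{1}[0,T]}\to0$ and $\|g_{2,T}\|_{\mathcal U_{2}[0,T]}\to0$ as $T\to\infty$. Since the state on $[0,T]$ depends only on the control restricted to $[0,T]$, one has $\mathcal L_{k,T}^{\alpha}\bar u_{k,\infty}^{r}=(\mathcal L_{k,\infty}^{\alpha}\bar u_{k,\infty})|_{[0,T]}$ and $\mathcal N_{T}^{\alpha}x=(\mathcal N_{\infty}^{\alpha}x)|_{[0,T]}$, and multiplication by the (bounded, time‑local) coefficient matrices commutes with restriction; dually, $(\mathcal L_{k,T}^{\alpha})^{*}f$ equals the restriction to $[0,T]$ of $(\mathcal L_{k,\infty}^{\alpha})^{*}$ applied to the zero‑extension of $f$. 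Consequently each of $\mathcal M_{ij,T}^{\alpha}\bar u_{j,\infty}^{r}$ and $\mathcal K_{i,T}^{\alpha}x$ differs from the restriction to $[0,T]$ of its infinite‑horizon counterpart only by terms of the form $(\mathcal L_{i,\infty}^{\alpha})^{*}(\varphi\,\mathbf 1_{[T,\infty)})$ with $\varphi\in L_{\mathbb F}^{2}(\mathbb R^{n})$; here \eqref{state-k-estimate}, \eqref{state-0-estimate} and the admissibility of $\bar u_{\infty}$ (so that $\bar u_{k,\infty}\in L_{\mathbb F}^{2}([0,\infty);\mathbb R^{m_{k}})$) guarantee that the relevant $X$‑processes, $\bar u_{k,\infty}$ and $X_{0}$ are square‑integrable on $[0,\infty)$, so those tail terms vanish. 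Summing these identities and invoking the first‑order condition $\mathcal M_{\infty}^{\alpha}\bar u_{\infty}+\mathcal K_{\infty}^{\alpha}x=0$ for Problem (M-ZLQ)$_{\infty}$ (equivalently $\bar u_{\infty}=-(\mathcal M_{\infty}^{\alpha})^{-1}\mathcal K_{\infty}^{\alpha}x$ from the proof of Theorem \ref{thm-unique-solvability}), the ``main parts'' cancel after restriction, leaving a finite sum of $L^{2}(\mathbb R^{n})$‑tails; thus $\|g_{1,T}\|,\|g_{2,T}\|\to0$. Moreover, since the state on $[0,T]$ driven by $\bar u_{\infty}^{r}$ coincides with $\bar X_{\infty}|_{[0,T]}$, we have $J_{T}(x,i;\bar u_{\infty}^{r})=J_{\infty}(x,i;\bar u_{\infty})-\mathbb E\int_{T}^{\infty}\langle\cdots\rangle\,dt=V_{\infty}(x,i)-o(1)$, the tail being dominated by $\mathbb E\int_{T}^{\infty}\bigl(|\bar X_{\infty}|^{2}+|\bar u_{1,\infty}|^{2}+|\bar u_{2,\infty}|^{2}\bigr)dt\to0$. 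Combining, $V_{T}(x,i)\to V_{\infty}(x,i)$ and $J_{T}(x,i;\bar u_{\infty}^{r})-V_{T}(x,i)\to0$.

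Finally I would close the argument by coercivity. Expanding $J_{T}$ around the saddle point $\bar u_{T}$ of Problem (M-ZLQ)$_{T}$ (using its stationarity) gives
\begin{align*}
J_{T}(x,i;\bar u_{1,\infty}^{r},\bar u_{2,T})&=V_{T}(x,i)+\langle\mathcal M_{11,T}^{\alpha}(\bar u_{1,\infty}^{r}-\bar u_{1,T}),\,\bar u_{1,\infty}^{r}-\bar u_{1,T}\rangle\ge V_{T}(x,i)+\delta\|\bar u_{1,\infty}^{r}-\bar u_{1,T}\|^{2},\\
J_{T}(x,i;\bar u_{1,T},\bar u_{2,\infty}^{r})&=V_{T}(x,i)+\langle\mathcal M_{22,T}^{\alpha}(\bar u_{2,\infty}^{r}-\bar u_{2,T}),\,\bar u_{2,\infty}^{r}-\bar u_{2,T}\rangle\le V_{T}(x,i)-\delta\|\bar u_{2,\infty}^{r}-\bar u_{2,T}\|^{2}.
\end{align*}
Bounding the left‑hand sides by the almost‑saddle inequalities of the first paragraph ($J_{T}(x,i;\bar u_{1,\infty}^{r},\bar u_{2,T})\le J_{T}(x,i;\bar u_{\infty}^{r})+\delta^{-1}\|g_{2,T}\|^{2}$ and $J_{T}(x,i;\bar u_{1,T},\bar u_{2,\infty}^{r})\ge J_{T}(x,i;\bar u_{\infty}^{r})-\delta^{-1}\|g_{1,T}\|^{2}$) and then using $|V_{T}(x,i)-J_{T}(x,i;\bar u_{\infty}^{r})|\to0$ and $\|g_{k,T}\|\to0$, we get $\|\bar u_{1,T}-\bar u_{1,\infty}^{r}\|^{2}\to0$ and $\|\bar u_{2,T}-\bar u_{2,\infty}^{r}\|^{2}\to0$, which is precisely \eqref{strategy-convergence}. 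Then \eqref{state-convergence} follows because $\bar X_{T}-\bar X_{\infty}$ solves, on $[0,T]$, an equation of the form \eqref{AC-b-sigma} with zero initial value and inhomogeneities $B(\alpha)(\bar u_{T}-\bar u_{\infty})$, $D(\alpha)(\bar u_{T}-\bar u_{\infty})$, so \eqref{estimation-AC-bsigma-2} yields $\mathbb E\int_{0}^{T}|\bar X_{T}-\bar X_{\infty}|^{2}dt\le K\,\mathbb E\int_{0}^{T}|\bar u_{T}-\bar u_{\infty}|^{2}dt\to0$.

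The step I expect to be the main obstacle is the uniform‑in‑$T$ vanishing of $\|g_{1,T}\|$ and $\|g_{2,T}\|$: one must carefully separate the ``causal'' operators (the $\mathcal L_{k,T}^{\alpha}$ and the matrix‑multiplication operators, under which truncation to $[0,T]$ is harmless) from the ``anti‑causal'' adjoints $(\mathcal L_{k,T}^{\alpha})^{*}$, which genuinely change under truncation, and then convert the resulting discrepancies into tails of $L^{2}(\mathbb R^{n})$‑functions. This is exactly where the admissibility of $\bar u_{\infty}$ and the $L^{2}$‑stability and decay estimates of Section \ref{section-2} (Proposition \ref{prop-AC-estimation}, Corollary \ref{coro-AC}, together with \eqref{state-k-estimate}--\eqref{state-0-estimate}) are used in an essential way.
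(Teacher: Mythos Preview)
Your proof is correct but follows a genuinely different route from the paper's. The paper works through the FBSDE characterizations (Theorems~\ref{thm-T-open-loop-solvable} and~\ref{thm-open-loop-solvable-infty}): subtracting the two optimality systems shows that the difference $\hat u_{T}:=\bar u_{\infty}-\bar u_{T}$ is itself the open-loop saddle strategy of a zero-sum game on $[0,T]$ with zero initial state and an additional linear terminal cost $2\langle\bar Y_{\infty}(T),X(T)\rangle$. The Hilbert-space representation of \emph{that} auxiliary game yields directly $\hat u_{T}=-(\mathcal M_{T}^{\alpha})^{-1}(\widehat{\mathcal L}_{T}^{\alpha})^{*}\bar Y_{\infty}(T)$, and the conclusion follows from the uniform boundedness of $(\mathcal M_{T}^{\alpha})^{-1}$ and $\widehat{\mathcal L}_{T}^{\alpha}$ together with $\mathbb E|\bar Y_{\infty}(T)|^{2}\to 0$ (Remark~\ref{rmk-Y-infinte}).

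By contrast, your argument is purely variational and bypasses the adjoint BSDE altogether: you measure how far the restriction $\bar u_{\infty}^{r}$ is from being a saddle for Problem (M-ZLQ)$_{T}$ via the stationarity defects $g_{k,T}$, show these vanish by a causal/anti-causal decomposition of the operators, and close with the uniform coercivity from (A1). The paper's approach is tidier---everything funnels through the single quantity $\bar Y_{\infty}(T)$---and connects more transparently to the later rate analysis in Theorem~\ref{thm-convergence-rate}. Your approach has the merit of being more elementary (no FBSDE machinery) and of making the role of the $L^{2}$-tails explicit; it would also yield quantitative bounds once tail decay rates for $\bar X_{\infty}$ and $\bar u_{\infty}$ are available, but with more bookkeeping than the paper's single-term estimate.
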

\begin{proof}
 Let
 $$
 \bar{u}_{T}(\cdot)=\left(\begin{array}{c}\bar{u}_{1,T}(\cdot)\\ \bar{u}_{2,T}(\cdot)\end{array}\right),\quad  \bar{u}_{\infty}(\cdot)=\left(\begin{array}{c}\bar{u}_{1,\infty}(\cdot)\\ \bar{u}_{2,\infty}(\cdot)\end{array}\right).
 $$
 Then by Theorem \ref{thm-T-open-loop-solvable}, the adapted solution $(\bar{X}_{T}(\cdot),\bar{Y}_{T}(\cdot),\bar{Z}_{T}(\cdot),\mathbf{\bar{\Gamma}_{T}}(\cdot))$ to FBSDEs \eqref{FBSDEs-T} satisfies condition \eqref{stationary-condition-T}. Similarly, by Theorem \ref{thm-open-loop-solvable-infty}, the adapted solution $(\bar{X}_{\infty}(\cdot), \bar{Y}_{\infty}(\cdot), \bar{Z}_{\infty}(\cdot),\mathbf{\bar{\Gamma}_{\infty}}(\cdot))$ to FBSDEs \eqref{FBSDEs-infinite} satisfies condition \eqref{stationary-condition-infinite}.

Now, for $t\in[0,T]$, we define
 $$
 \begin{array}{l}
 \hat{u}_{T}(t)=\left(\begin{array}{c}\hat{u}_{1,T}(t)\\ \hat{u}_{2,T}(t)\end{array}\right)\triangleq \left(\begin{array}{c}\bar{u}_{1,\infty}(t)-\bar{u}_{1,T}(t)\\\bar{u}_{2,\infty}(t)- \bar{u}_{2,T}(t)\end{array}\right)
 =\bar{u}_{\infty}(t)-\bar{u}_{T}(t),\\[5mm]
 \hat{X}_{T}(t)=\bar{X}_{\infty}(t)-\bar{X}_{T}(t),\quad \hat{Y}_{T}(t)=\bar{Y}_{\infty}(t)-\bar{Y}_{T}(t),\quad \hat{Z}_{T}(t)=\bar{Z}_{\infty}(t)-\bar{Z}_{T}(t),\quad \mathbf{\hat{\Gamma}_{T}}(t)=\mathbf{\bar{\Gamma}_{\infty}}(t)-\mathbf{\bar{\Gamma}_{T}}(t).
 \end{array}
 $$
 Then, $(\hat{X}_{T}(\cdot),\hat{Y}_{T}(\cdot),\hat{Z}_{T}(\cdot),\mathbf{\hat{\Gamma}_{T}}(\cdot))$ solves the  FBSDEs:
 \begin{equation}\label{FBSDEs-T-2}
  \left\{
      \begin{aligned}
      d\hat{X}_{T}(t)&=\left[A(\alpha_{t})\hat{X}_{T}(t)+B(\alpha_{t})\hat{u}_{T}(t)\right]dt+\left[C(\alpha_{t})\hat{X}_{T}(t)+D(\alpha_{t})\hat{u}_{T}(t)\right]dW(t),\\
      d\hat{Y}_{T}(t)&=-\left[A(\alpha_{t})^{\top}\hat{Y}_{T}(t)+C(\alpha_{t})^{\top}\hat{Z}_{T}(t)+Q(\alpha_{t})\hat{X}_{T}(t)+S(\alpha_{t})^{\top}\hat{u}_{T}(t)\right]dt\\
      &\quad+\hat{Z}_{T}(t)dW(t)+\mathbf{\hat{\Gamma}_{T}}(t)\cdot d\mathbf{\widetilde{N}}(t),\quad t\in [0,T],\\
     \hat{X}_{T} (0)&=0,\quad\alpha_{0}=i,\quad \hat{Y}_{T}(T)=\bar{Y}_{\infty}(T),
      \end{aligned}
      \right.
  \end{equation}
  and satisfies the stationary condition:
  \begin{equation}\label{stationary-condition-T-2}
     B(\alpha_{t})^{\top}\hat{Y}_{T}(t)+ D(\alpha_{t})^{\top}\hat{Z}_{T}(t)+S(\alpha_{t})\hat{X}_{T}(t)+R(\alpha_{t})\hat{u}_{T}(t)=0,\quad a.e.\quad a.s..
  \end{equation}
Thus, by Theorem \ref{thm-T-open-loop-solvable} (i) and Theorem \ref{thm-unique-solvability}, we obtain that $\hat{u}_{T}(\cdot)$ is the unique open-loop saddle strategy for the zero-sum SLQ differential game with the state equation
\begin{equation}\label{ZLQ-state-2}
   \left\{
   \begin{aligned}
   dX(t)&=\left[A(\alpha_{t})X(t)+B_{1}(\alpha_{t})u_{1}(t)+B_{2}(\alpha_{t})u_{2}(t)\right]dt\\
   &\quad+\left[C(\alpha_{t})X(t)+D_{1}(\alpha_{t})u_{1}(t)+D_{2}(\alpha_{t})u_{2}(t)\right]dW(t),\qquad t\in [0,T],\\
   X(0)&=0,\quad \alpha(0)=i,
   \end{aligned}
   \right.
 \end{equation}
 and the performance functional
 \begin{equation}\label{ZLQ-performance-2}
\begin{aligned}
    \hat{J}_{T}\left(0,i;u_{1}(\cdot),u_{2}(\cdot)\right)
   & \triangleq \mathbb{E}\Bigg[\int_{0}^{T}
    \left<
    \left(
    \begin{array}{ccc}
    Q(\alpha_{t}) & S_{1}(\alpha_{t})^{\top} & S_{2}(\alpha_{t})^{\top} \\
    S_{1}(\alpha_{t}) & R_{11}(\alpha_{t}) & R_{12}(\alpha_{t}) \\
    S_{2}(\alpha_{t}) & R_{21}(\alpha_{t}) & R_{22}(\alpha_{t})
    \end{array}
    \right)
    \left(
    \begin{array}{c}
    X(t) \\
    u_{1}(t) \\
    u_{2}(t)
    \end{array}
    \right),
    \left(
     \begin{array}{c}
    X(t) \\
    u_{1}(t) \\
    u_{2}(t)
    \end{array}
    \right)
    \right>dt\\
    &\quad +2\left<\bar{Y}_{\infty}(T),X(t)\right>\Bigg].
  \end{aligned}
\end{equation}

Next, we represent the open-loop saddle strategy $\hat{u}_{T}(\cdot)$ of the above zero-sum SLQ differential game problem using Hilbert space operators. Using similar analysis as in Section \ref{section-3}, the performance functional \eqref{ZLQ-performance-2} can be represented as
 \begin{equation}\label{hat-J-representation}
  \hat{J}_{T}\left(0,i;u_{1}(\cdot),u_{2}(\cdot)\right)=\big<\mathcal{M}_{T}^{\alpha}u,u\big>+2\big<(\widehat{\mathcal{L}}_{T}^{\alpha})^{*}\bar{Y}_{\infty}(T),u\big>,
 \end{equation}
where $\widehat{\mathcal{L}}_{T}^{\alpha}\triangleq \left(\widehat{\mathcal{L}}_{1,T}^{\alpha},\widehat{\mathcal{L}}_{2,T}^{\alpha}\right)
$ and 
 $\mathcal{M}_{T}^{\alpha}=\left(
\begin{matrix}
\mathcal{M}_{11,T}^{\alpha} & \mathcal{M}_{12,T}^{\alpha}\\ \mathcal{M}_{21,T}^{\alpha} & \mathcal{M}_{22,T}^{\alpha}
\end{matrix}
\right)$
is defined in \eqref{performance-functional-operator-1}. Clearly, $\mathcal{M}_{T}^{\alpha}$ is invertible and its inverse $(\mathcal{M}_{T}^{\alpha})^{-1}$ is given by
 \[
(\mathcal{M}_{T}^{\alpha})^{-1} =\left(\begin{array}{cc}
(\mathcal{M}_{11,T}^{\alpha})^{-1}+\mathcal{H}_{T}^{\alpha} (\Phi_{T}^{\alpha})^{-1}(\mathcal{H}_{T}^{\alpha})^{*} & -\mathcal{H}_{T}^{\alpha} (\Phi_{T}^{\alpha})^{-1} \\
-(\Phi_{T}^{\alpha})^{-1}(\mathcal{H}_{T}^{\alpha})^{*} & (\Phi_{T}^{\alpha})^{-1}
\end{array}\right),
\]
where \( \Phi_{T}^{\alpha}\triangleq \mathcal{M}_{22,T}^{\alpha}-\mathcal{M}_{21,T}^{\alpha}(\mathcal{M}_{11,T}^{\alpha})^{-1} \mathcal{M}_{12,T}^{\alpha} \) is a negative (and thus invertible) operator, and $\mathcal{H}_{T}^{\alpha}\triangleq (\mathcal{M}_{11,T}^{\alpha})^{-1}\mathcal{M}_{12,T}^{\alpha}$. Consequently, equation \eqref{hat-J-representation} shows that the unique open-loop saddle strategy $\hat{u}_{T}(\cdot)$ can be represented as:
$$
\hat{u}_{T}(\cdot)=-(\mathcal{M}_{T}^{\alpha})^{-1}(\widehat{\mathcal{L}}_{T}^{\alpha})^{*}\bar{Y}_{\infty}(T).
$$

Observe that condition (A1) implies that for any $T>0$, there exists a constant $\delta>0$ (independent of $T$) such that
\[
\left\langle\mathcal{M}_{11, T}^{\alpha} u_{1}, u_{1}\right\rangle \geqslant \delta\left\|u_{1}(\cdot)\right\|^{2}, \quad\left\langle\mathcal{M}_{22, T}^{\alpha} u_{2}, u_{2}\right\rangle \leqslant-\delta\left\|u_{2}(\cdot)\right\|^{2},
\]
which further implies that $(\mathcal{M}_{11,T}^{\alpha})^{-1}$, $(\Phi_{T}^{\alpha})^{-1}$, and hence $(\mathcal{M}_{T}^{\alpha})^{-1}$ are uniformly bounded in $T$, that is, $
||(\mathcal{M}_{T}^{\alpha})^{-1}||\leq K$ for some constant $K>0$ (independent of $T$). Recall that the operator $\widehat{\mathcal{L}}_{T}^{\alpha}$ is also uniformly bounded in $T$. Thus, from the expression of $\hat{u}_{T}(\cdot)$, there exists a constant $K > 0$, independent of $T$, such that:
$$
\mathbb{E}\int_{0}^{T}\left[\left|\bar{u}_{1, T}(t)-\bar{u}_{1,\infty}(t)\right|^{2}+\left|\bar{u}_{2, T}(t)-\bar{u}_{2,\infty}(t)\right|^{2}\right] d t=||\hat{u}_{T}(\cdot)||\leq K\mathbb{E}\left|Y_{\infty}(T)\right|^{2}.
$$
Consequently, the desired result \eqref{strategy-convergence} follows from Remark \ref{rmk-Y-infinte}. Moreover, since $\hat{X}_{T}(\cdot)$ satisfies the first SDE in \eqref{FBSDEs-T-2} with initial value $\hat{X}_{T}(0)=0$, applying Proposition \ref{prop-AC-estimation} together with \eqref{strategy-convergence} yields \eqref{state-convergence}.
\end{proof}

Based on the above result, we further investigate the convergence of the solution to the CDREs \eqref{CDREs-T}. The following result establishes the convergence of $\lim_{T\rightarrow \infty}P_{T}(t;i)$ and provides some properties for the limit matrix $P(i)$, $i\in\mathcal{S}$.

\begin{proposition}\label{prop-CAREs-existence}
Let (A1)-(A2) hold and $\mathbf{P_{T}(\cdot)}$ be the unique strongly regular solution to CDREs \eqref{CDREs-T}. Then, for each $i\in\mathcal{S}$,  the limit
\begin{equation}\label{P-limit-2}
P(i)\triangleq \lim_{T\rightarrow \infty}P_{T}(t,i),\quad i\in\mathcal{S},
\end{equation}
exists independent of $t$ and $\mathbf{P}\triangleq \left(P(1),P(2),\cdots,P(L)\right)\in\mathcal{D}\left(\mathbb{S}^{n}\right)$ has the following properties:
\begin{description}
  \item[(i)] $\mathcal{N}_{11}(\mathbf{P},i)>0$, $\mathcal{N}_{22}(\mathbf{P},i)<0$;
  \item[(ii)] $\mathbf{P}$ is the unique solution to CAREs
  \begin{equation}\label{CAREs-infinite-2}
    \mathcal{M}(\mathbf{P},i)-\mathcal{L}(\mathbf{P},i) \mathcal{N}(\mathbf{P},i)^{-1} \mathcal{L}(\mathbf{P},i)^{\top} = 0,\quad i\in\mathcal{S}.
    \end{equation}
  \item [(iii)] Set $\Theta(i)\triangleq -\mathcal{N}(\mathbf{P},i)^{-1}\mathcal{L}(\mathbf{P},i)^{\top},\  i\in\mathcal{S}$ and let $\mathbf{\Theta}\triangleq \left(\Theta(1),\Theta(2),\cdots,\Theta(L)\right)$. Then, 
\begin{equation}\label{CAREs-constraint}
 \mathbf{\Theta}\in \mathcal{H}[A,C;B,D]_{\alpha}.
 \end{equation}
\end{description}
\end{proposition}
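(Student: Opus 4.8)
The plan is to establish, in turn, the existence of the $t$-independent limit $\mathbf{P}$, the strict definiteness (i), the algebraic equation (ii) and the stabilizer property (iii), and finally uniqueness in (ii). First, since none of the maps $\mathcal{M}(t;\cdot,i)$, $\mathcal{L}(t;\cdot,i)$, $\mathcal{N}(t;\cdot,i)$ depends explicitly on $t$, the CDREs \eqref{CDREs-T} are autonomous, so $P_{T}(t,i)=P_{T-t}(0,i)$ and it suffices to show that $s\mapsto P_{s}(0,i)$ converges as $s\to\infty$. By Corollary \ref{coro-T}(iii), $V_{T}(x,i)=\langle P_{T}(0,i)x,x\rangle$, while by Theorem \ref{thm-unique-solvability} and the operator representation \eqref{performance-functional-Hilbert-infinite}, $V_{\infty}(x,i)=J_{\infty}(x,i;\bar{u}_{\infty})$ is a quadratic form $\langle P(i)x,x\rangle$ in $x$ because $\bar{u}_{\infty}(\cdot)=-(\mathcal{M}_{\infty}^{\alpha})^{-1}\mathcal{K}_{\infty}^{\alpha}x$ depends linearly on $x$. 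Writing $g(X,u)$ for the quadratic integrand of \eqref{ZLQ-performance} and splitting $V_{T}(x,i)-V_{\infty}(x,i)=\mathbb{E}\int_{0}^{T}[g(\bar{X}_{T},\bar{u}_{T})-g(\bar{X}_{\infty},\bar{u}_{\infty})]\,dt-\mathbb{E}\int_{T}^{\infty}g(\bar{X}_{\infty},\bar{u}_{\infty})\,dt$, the tail vanishes since $(\bar{X}_{\infty},\bar{u}_{\infty})\in L_{\mathbb{F}}^{2}(\mathbb{R}^{n})$, and the first term is bounded by a factor uniformly bounded in $T$ times $\|\bar{X}_{T}-\bar{X}_{\infty}\|_{L_{\mathbb{F}}^{2}[0,T]}+\|\bar{u}_{T}-\bar{u}_{\infty}\|_{L_{\mathbb{F}}^{2}[0,T]}$, which tends to $0$ by Theorem \ref{thm-convergence}. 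Hence $\langle P_{T}(0,i)x,x\rangle\to\langle P(i)x,x\rangle$ for all $x$, so by the polarization identity $P_{T}(0,i)\to P(i)\in\mathbb{S}^{n}$, and consequently $P_{T}(t,i)\to P(i)$ for each fixed $t$, giving \eqref{P-limit-2}.

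Property (i) follows by letting $T\to\infty$ in \eqref{strongly-regular-condition-T}: since $\mathbf{P}\mapsto\mathcal{N}_{kk}(\mathbf{P},i)=D_{k}(i)^{\top}P(i)D_{k}(i)+R_{kk}(i)$ is continuous, $\mathcal{N}_{11}(\mathbf{P},i)\ge\delta I>0$ and $\mathcal{N}_{22}(\mathbf{P},i)\le-\delta I<0$; a Schur-complement argument then shows $\mathcal{N}(\mathbf{P},i)$ is invertible, so the CAREs \eqref{CAREs-infinite-2} are well posed. For existence in (ii), reversing time in \eqref{CDREs-T} shows that $s\mapsto P_{s}(0,i)$ solves a time-reversed, autonomous Riccati ODE with initial value $0$; as $s\to\infty$ its right-hand side converges to $\mathcal{M}(\mathbf{P},i)-\mathcal{L}(\mathbf{P},i)\mathcal{N}(\mathbf{P},i)^{-1}\mathcal{L}(\mathbf{P},i)^{\top}$ (continuity of inversion near the invertible $\mathcal{N}(\mathbf{P},i)$, the inverses being uniformly bounded thanks to \eqref{strongly-regular-condition-T}), while $P_{s}(0,i)$ itself converges, and a mean-value argument forces this limit to vanish; thus $\mathbf{P}$ solves \eqref{CAREs-infinite-2}.

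For the stabilizer property (iii), set $\Theta(i)\triangleq-\mathcal{N}(\mathbf{P},i)^{-1}\mathcal{L}(\mathbf{P},i)^{\top}$ and $\Theta_{T}(t,i)\triangleq-\mathcal{N}(t;\mathbf{P_{T}},i)^{-1}\mathcal{L}(t;\mathbf{P_{T}},i)^{\top}$; then $P_{T}(t,i)=P_{T-t}(0,i)\to P(i)$ together with the uniform bound on $\mathcal{N}(\cdot)^{-1}$ gives $\Theta_{T}(t,i)\to\Theta(i)$, uniformly for $T-t$ sufficiently large. Using $\bar{u}_{T}(t)=\Theta_{T}(t,\alpha_{t})\bar{X}_{T}(t)$ from Theorem \ref{thm-T-open-loop-solvable}(iii), I would decompose $\bar{u}_{\infty}(t)-\Theta(\alpha_{t})\bar{X}_{\infty}(t)=(\bar{u}_{\infty}(t)-\bar{u}_{T}(t))+\Theta_{T}(t,\alpha_{t})(\bar{X}_{T}(t)-\bar{X}_{\infty}(t))+(\Theta_{T}(t,\alpha_{t})-\Theta(\alpha_{t}))\bar{X}_{T}(t)$ and show that each term tends to $0$ in $L_{\mathbb{F}}^{2}[0,T]$: the first two by Theorem \ref{thm-convergence}, and the third by splitting $[0,T]=[0,T-s_{0}]\cup[T-s_{0},T]$, where on $[0,T-s_{0}]$ one uses that $\|\Theta_{T}-\Theta\|$ is small once $s_{0}$ is large while $\|\bar{X}_{T}\|_{L_{\mathbb{F}}^{2}[0,T]}$ is bounded uniformly in $T$, and on $[T-s_{0},T]$ one uses $\|\bar{X}_{T}\|_{L_{\mathbb{F}}^{2}[T-s_{0},T]}\le 2\|\bar{X}_{T}-\bar{X}_{\infty}\|_{L_{\mathbb{F}}^{2}[0,T]}+2\|\bar{X}_{\infty}\|_{L_{\mathbb{F}}^{2}[T-s_{0},\infty)}\to0$. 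This yields $\bar{u}_{\infty}(t)=\Theta(\alpha_{t})\bar{X}_{\infty}(t)$ a.e., a.s., so $\bar{X}_{\infty}(\cdot;x,i)$ is precisely the solution of the closed-loop system \eqref{state-closed} with gain $\mathbf{\Theta}$; since $\bar{u}_{\infty}\in L_{\mathbb{F}}^{2}(\mathbb{R}^{n})$ forces $\bar{X}_{\infty}(\cdot;x,i)\in L_{\mathbb{F}}^{2}(\mathbb{R}^{n})$ for every $(x,i)$ by Proposition \ref{prop-AC-estimation}, the system $[A+B\Theta,C+D\Theta]_{\alpha}$ is $L^{2}$-stable, i.e. \eqref{CAREs-constraint} holds.

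Finally, for uniqueness in (ii), given a second solution $\widehat{\mathbf{P}}$ with $\mathcal{N}(\widehat{\mathbf{P}},i)$ invertible, one subtracts the two copies of \eqref{CAREs-infinite-2} and completes the square in the Riccati operator to obtain, for $\Delta(i)\triangleq P(i)-\widehat{P}(i)$, $\widehat{\Theta}(i)\triangleq-\mathcal{N}(\widehat{\mathbf{P}},i)^{-1}\mathcal{L}(\widehat{\mathbf{P}},i)^{\top}$ and the closed-loop coefficients $A(i)+B(i)\Theta(i)$, $C(i)+D(i)\Theta(i)$, a Lyapunov-type identity whose right-hand side equals $-(\Theta(i)-\widehat{\Theta}(i))^{\top}\mathcal{N}(\widehat{\mathbf{P}},i)(\Theta(i)-\widehat{\Theta}(i))$; applying It\^o's formula to $\langle\Delta(\alpha_{t})\bar{X}_{\infty}(t),\bar{X}_{\infty}(t)\rangle$ along the stable closed-loop flow of (iii), together with the symmetric identity obtained by interchanging $\mathbf{P}$ and $\widehat{\mathbf{P}}$, forces $\Delta\equiv0$ (here one records the sign conditions of (i) for $\widehat{\mathbf{P}}$ as part of the notion of admissible solution, or derives them from (A1), so that the game-theoretic verification closes). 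The step I expect to be the main obstacle is the identification $\bar{u}_{\infty}(t)=\Theta(\alpha_{t})\bar{X}_{\infty}(t)$ underlying (iii): it is the only genuinely new argument, it is precisely where the coupling of \eqref{CAREs-infinite-2} across regimes and the absence of monotonicity of $\mathbf{P_{T}}$ preclude a direct Lyapunov proof, and it relies on both the $L^{2}$-convergence of the finite-horizon saddle pairs and a quantitative control of the feedback-gain convergence $\Theta_{T}\to\Theta$ that is uniform away from the terminal time.
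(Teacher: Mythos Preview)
Your overall architecture matches the paper almost step for step: the time-shift identity $P_{T}(t,i)=P_{T-t}(0,i)$, the existence of the limit via $V_{T}(x,i)=\langle P_{T}(0,i)x,x\rangle\to V_{\infty}(x,i)$ using Theorem~\ref{thm-convergence}, property~(i) by passing to the limit in \eqref{strongly-regular-condition-T}, existence in~(ii) by letting $T\to\infty$ in the integrated CDREs, and property~(iii) by proving $\bar{u}_{\infty}(t)=\Theta(\alpha_{t})\bar{X}_{\infty}(t)$ from the convergence of the finite-horizon feedbacks. Two points deserve comment.

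First, your three-term splitting in~(iii) is miswritten: $\Theta_{T}(\bar{X}_{T}-\bar{X}_{\infty})+(\Theta_{T}-\Theta)\bar{X}_{T}$ does not equal $\Theta_{T}\bar{X}_{T}-\Theta\bar{X}_{\infty}$. The paper uses the decomposition $\Theta_{T}(\bar{X}_{T}-\bar{X}_{\infty})+(\Theta_{T}-\Theta)\bar{X}_{\infty}$, whose last term is handled by dominated convergence ($\Theta_{T}(t,\cdot)\to\Theta(\cdot)$ pointwise in $t$, uniformly bounded, against $|\bar{X}_{\infty}|^{2}\in L^{1}[0,\infty)$), avoiding your time-splitting entirely. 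Your intended argument with $(\Theta_{T}-\Theta)\bar{X}_{T}$ and the $[0,T-s_{0}]\cup[T-s_{0},T]$ cut would also work once the typo is fixed, but it is strictly more laborious.

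Second, and more substantively, your uniqueness argument via a Lyapunov identity does not close in the game setting. Completing the square gives a right-hand side of the form $-(\Theta-\widehat{\Theta})^{\top}\mathcal{N}(\widehat{\mathbf{P}},\cdot)(\Theta-\widehat{\Theta})$, but $\mathcal{N}(\widehat{\mathbf{P}},i)$ is \emph{indefinite} here ($\mathcal{N}_{11}>0$, $\mathcal{N}_{22}<0$), so neither this identity nor the symmetric one with $\mathbf{P}$ and $\widehat{\mathbf{P}}$ interchanged carries a sign, and their combination involves two different closed-loop flows and still yields nothing. The paper avoids this entirely: for any solution $\mathbf{P}$ of \eqref{CAREs-infinite-2} satisfying \eqref{CAREs-constraint}, it invokes the verification result (\cite[Corollary~4.6]{My-paper-2024-infinite-M-ZLQ}) that the feedback $u=\Theta(\alpha)X_{\Theta}$ is the unique open-loop saddle, and then an It\^o computation on $\langle P(\alpha_{t})X_{\Theta}(t),X_{\Theta}(t)\rangle$ along the $L^{2}$-stable $\Theta$-flow gives $V_{\infty}(x,i)=\langle P(i)x,x\rangle$. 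Since $V_{\infty}$ is intrinsic, $P(i)$ is uniquely determined. Your parenthetical ``so that the game-theoretic verification closes'' is in fact the whole argument; the Lyapunov route should be dropped.
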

\begin{proof}
Clearly, for any $0\leq t\leq T$ and $0\leq s\leq T-t$, one has $P_{T}(t+s,i)=P_{T-t}(s,i), i\in\mathcal{S}$, which implies that
\begin{equation}\label{relation}
P_{T}(t,i)=P_{T-t}(0,i),\quad \forall 0\leq t\leq T<\infty, \quad i\in\mathcal{S}.
\end{equation}
Consequently, the limit if exists
$$\lim_{T\rightarrow \infty}P_{T}(t,i)=\lim_{T\rightarrow \infty}P_{T-t}(0,i)=\lim_{T\rightarrow \infty}P_{T}(0,i),\quad \forall i\in\mathcal{S},$$
is independent of $t$.

Now, we first prove that the limit $\lim_{T\rightarrow \infty}P_{T}(0,i)$ exist for every $i\in\mathcal{S}$. According to \cite[Theorem 4.1]{My-paper-2024-finite-M-ZLQ}, 
the value functions $V_{T}(\cdot,\cdot)$ of Problem (M-ZLQ)$_{T}$ is given by
$$
V_{T}(x,i)=\big<P_{T}(0,i)x,x\big>,\quad \forall (x,i)\in\mathbb{R}^{n}\times\mathcal{S}.
$$
Let  $\bar{u}_{T}(\cdot)=\left(\begin{array}{c}\bar{u}_{1,T}(\cdot)\\ \bar{u}_{2,T}(\cdot)\end{array}\right), \  \bar{u}_{\infty}(\cdot)=\left(\begin{array}{c}\bar{u}_{1,\infty}(\cdot)\\ \bar{u}_{2,\infty}(\cdot)\end{array}\right)
 $
 be the open-loop saddle strategies of Problem (M-ZLQ)$_{T}$ and Problem (M-ZLQ)$_{\infty}$, respectively. Then, by Theorem \ref{thm-convergence}, we have
 $$
 \begin{aligned}
\lim _{T \rightarrow \infty}\left\langle P_{T}(0,i) x, x\right\rangle&=\lim _{T \rightarrow \infty} V_{T}(x,i)=\lim _{T \rightarrow \infty} J_{T}\left(x ,i; \bar{u}_{1, T}(\cdot), \bar{u}_{2, T}(\cdot)\right) \\
&=\lim _{T \rightarrow \infty} \mathbb{E} \int_{0}^{T}\left\langle\left(\begin{array}{cc}
Q(\alpha_{t}) & S(\alpha_{t})^{\top} \\
S(\alpha_{t}) & R(\alpha_{t})
\end{array}\right)
\left(\begin{array}{c}\bar{X}_{T}(t)\\ \bar{u}_{T}(t)\end{array}\right),
\left(\begin{array}{c}\bar{X}_{T}(t)\\ \bar{u}_{T}(t)\end{array}\right)\right\rangle d t \\
&=\mathbb{E} \int_{0}^{\infty}\left\langle\left(\begin{array}{cc}
Q(\alpha_{t}) & S(\alpha_{t})^{\top} \\
S(\alpha_{t}) & R(\alpha_{t})
\end{array}\right)
\left(\begin{array}{c}\bar{X}_{\infty}(t)\\ \bar{u}_{\infty}(t)\end{array}\right),
\left(\begin{array}{c}\bar{X}_{\infty}(t)\\ \bar{u}_{\infty}(t)\end{array}\right)\right\rangle d t\\
& =J_{\infty}\left(x,i ; \bar{u}_{1,\infty}(\cdot), \bar{u}_{2,\infty}(\cdot)\right)=V_{\infty}(x,i),\quad \forall (x,i)\in\mathbb{R}^{n}\times\mathcal{S},
\end{aligned}
 $$
 which implies that the limit $\lim_{T\rightarrow \infty}P_{T}(0,i)$ exists for every $i\in\mathcal{S}$.

 Next, we show that the limit $\mathbf{P} \triangleq (P(1), P(2), \ldots, P(L)) \in \mathcal{D}(\mathbb{S}^{n})$ satisfies the properties stated in Proposition \ref{prop-CAREs-existence}. Property (i) follows directly from \eqref{strongly-regular-condition-T}. To prove property (ii)-(iii), we divide the argument into three steps.

 For step 1, we show that the limit $\mathbf{P}$ is the solution to \eqref{CAREs-infinite-2}. It follows from CDREs \eqref{CDREs-T} and relation \eqref{relation} that
 \begin{align*}
 P_{T}(1,i)-P_{T}(0,i)&=-\int_{0}^{1}\left[\mathcal{M}(t;\mathbf{P_{T}},i)-\mathcal{L}(t;\mathbf{P_{T}},i)\mathcal{N}(t;\mathbf{P_{T}},i)^{-1}\mathcal{L}(t;\mathbf{P_{T}},i)^{\top}\right]dt\\
 &=-\int_{0}^{1}\left[\mathcal{M}(0;\mathbf{P_{T-t}},i)-\mathcal{L}(0;\mathbf{P_{T-t}},i)\mathcal{N}(0;\mathbf{P_{T-t}},i)^{-1}\mathcal{L}(0;\mathbf{P_{T-t}},i)^{\top}\right]dt\\
 &=\int_{T-1}^{T}\left[\mathcal{M}(0;\mathbf{P_{t}},i)-\mathcal{L}(0;\mathbf{P_{t}},i)\mathcal{N}(0;\mathbf{P_{t}},i)^{-1}\mathcal{L}(0;\mathbf{P_{t}},i)^{\top}\right]dt.
 \end{align*}
 Letting $T\rightarrow\infty$, the above equation yields
 $$
 0=\mathcal{M}(\mathbf{P},i)-\mathcal{L}(\mathbf{P},i)\mathcal{N}(\mathbf{P},i)^{-1}\mathcal{L}(\mathbf{P},i)^{\top},\quad i\in\mathcal{S}.
 $$

 For step 2, we verify that the limit $\mathbf{P}$ satisfies condition \eqref{CAREs-constraint}. Applying Theorem \ref{thm-T-open-loop-solvable} to Problem (M-ZLQ)$_T$, the open-loop saddle strategy \( \left(\bar{u}_{1, T}(\cdot), \bar{u}_{2, T}(\cdot)\right) \) for initial pair $(x,i)$ admits the following closed-loop representation:
\[
\bar{u}_{T}(t) =\Theta_{T}(t,\alpha_{t}) \bar{X}_{T}(t;x,i), \quad t \in[0, T],
\]
where $\Theta_{T}(\cdot,i)$ and $\bar{X}_{T}(\cdot)\triangleq\bar{X}_{T}(\cdot;x,i)$ are defined in \eqref{Theta-T} and \eqref{closed-loop-state-T}, respectively. Note that the existence of the limit \(\lim_{T\to\infty} P_T(t,i) = \lim_{T\to\infty} P_{T-t}(0,i)\) implies that the following holds for some constant \(K > 0\):
\begin{equation}\label{PT-bound}
\left|P_{T}(t,i)\right|=\left|P_{T-t}(0,i)\right| \leq K, \quad \forall t\in[0,T],\quad i\in\mathcal{S}.
\end{equation}
Therefore, from the definition of $\Theta_{T}(t,i)$, we have 
\begin{equation}\label{Theta-T-bounded}
\left|\Theta_{T}(t,i)\right| \leq K, \quad \forall  t\in[0,T], \quad i\in\mathcal{S},
\end{equation}
for some possibly different constant \( K>0 \). Moreover, we also have 
\begin{equation}\label{Theta-T-limit}
\lim _{T \rightarrow \infty} \Theta_{T}(t,i)=-\mathcal{N}(\mathbf{P},i)^{-1}\mathcal{L}(\mathbf{P},i)^{\top}=\Theta(i),\quad i\in\mathcal{S}.
\end{equation}
Let \( \bar{X}_{\infty}(\cdot)\triangleq \bar{X}_{\infty}(\cdot;x,i)\in L_{\mathbb{F}}^{2}(\mathbb{R}^{n}) \) denote the state process corresponding to the saddle strategy \( \bar{u}_{\infty}(\cdot)\triangleq \bar{u}_{\infty}(\cdot;x,i) \) of Problem (M-ZLQ)$_{\infty}$ for initial pair $(x,i)$. Then by Theorem \ref{thm-convergence}, equations \eqref{Theta-T-bounded}-\eqref{Theta-T-limit},
\begin{align*}
&\quad \mathbb{E} \int_{0}^{\infty}|\bar{u}_{\infty}(t)-\Theta(\alpha_{t}) \bar{X}_{\infty}(t)|^{2} d t\\
&=\lim _{T \rightarrow \infty} \mathbb{E} \int_{0}^{T}|\bar{u}_{\infty}(t)-\Theta(\alpha_{t}) \bar{X}_{\infty}(t)|^{2} d t \\
&\leq 2 \lim _{T \rightarrow \infty}\left[\mathbb{E} \int_{0}^{T}\left|\bar{u}_{\infty}(t)-\bar{u}_{T}(t)\right|^{2} d t+\mathbb{E} \int_{0}^{T}\left|\bar{u}_{T}(t)-\Theta(\alpha_{t}) \bar{X}_{\infty}(t)\right|^{2} d t\right] \\
&=2 \lim _{T \rightarrow \infty} \mathbb{E} \int_{0}^{T}\left|\Theta_{T}(t,\alpha_{t}) \bar{X}_{T}(t)-\Theta(\alpha_{t}) \bar{X}_{\infty}(t)\right|^{2} d t \\
& \leq 4 \lim _{T \rightarrow \infty}\left[\mathbb{E} \int_{0}^{T}\left|\Theta_{T}(t,\alpha_{t})\right|^{2}\left|\bar{X}_{T}(t)-\bar{X}_{\infty}(t)\right|^{2} d t+\mathbb{E} \int_{0}^{T}\left|\Theta_{T}(t,\alpha_{t})-\Theta(\alpha_{t})\right|^{2}|\bar{X}_{\infty}(t)|^{2} d t\right]=0.
\end{align*}
This implies that $\bar{u}_{\infty}(t;x,i) =\Theta(\alpha_{t})\bar{X}_{\infty}(t;x,i)$ for every initial pair $(x,i)$. Consequently,  the process $\bar{X}_{\infty}(\cdot)\triangleq \bar{X}_{\infty}(\cdot;x,i)\in L_{\mathbb{F}}^{2}(\mathbb{R}^{n})$ solves 
the following SDE:
$$
\left\{
   \begin{aligned}
   &d\bar{X}_{\infty}(t)=\left[A(\alpha_{t})+B(\alpha_{t})\Theta(\alpha_{t})\right]\bar{X}_{\infty}(t)dt
   +\left[C(\alpha_{t})+D(\alpha_{t})\Theta(\alpha_{t})\right]\bar{X}_{\infty}(t)dW(t)\\
   &\bar{X}_{\infty}(0)=x,\quad \alpha_{0}=i.
   \end{aligned}
   \right.
$$
Therefore, by definition, $\mathbf{\Theta}\triangleq \left[\Theta(1),\Theta(2),\cdots,\Theta(L)\right]\in \mathcal{H}[A,C;B,D]_{\alpha}$ since $\bar{X}(\cdot;x,i)\in L_{\mathbb{F}}^{2}(\mathbb{R}^{n})$ for any initial pair $(x,i)$.

For step $3$, we prove the unique solvability of the constrained CAREs \eqref{CAREs-infinite-2}-\eqref{CAREs-constraint}. Let $\mathbf{P}$ be a solution to CAREs \eqref{CAREs-infinite-2} satisfying condition \eqref{CAREs-constraint}. If we show that the value function of Problem (M-ZLQ)$_{\infty}$ can be represented as
\begin{equation}\label{value-infinite}
V_{\infty}(x,i)=\left<P(i)x,x\right>,\quad \forall (x,i)\in\mathbb{R}^{n}\times \mathcal{S},
\end{equation}
then uniqueness follows immediately. To this end, let $\mathbf{\Theta}\in \mathcal{H}[A,C;B,D]_{\alpha}$ be defined by \eqref{CAREs-constraint} and let $X_{\Theta}(\cdot;x,i)$ denote the solution to
$$
\left\{
   \begin{aligned}
   &dX_{\Theta}(t)=\left[A(\alpha_{t})+B(\alpha_{t})\Theta(\alpha_{t})\right]X_{\Theta}(t)dt
   +\left[C(\alpha_{t})+D(\alpha_{t})\Theta(\alpha_{t})\right]X_{\Theta}(t)dW(t)\\
   &X_{\Theta}(0)=x,\quad \alpha_{0}=i.
   \end{aligned}
   \right.
$$
By \cite[Corollary 4.6]{My-paper-2024-infinite-M-ZLQ}, the control
$$
\bar{u}(\cdot;x,i)=\left(\begin{matrix}\bar{u}_{1}(\cdot)\\\bar{u}_{2}(\cdot)\end{matrix}\right)\triangleq \Theta(\alpha(\cdot))X_{\Theta}(\cdot;x,i),\quad (x,i)\in\mathbb{R}^{n}\times\mathcal{S},
$$
is the unique open-loop saddle strategy of Problem (M-ZLQ)$_{\infty}$. Consequently, we have
\begin{equation}\label{step-3-1}
\begin{aligned}
V_{\infty}(x,i)&=J(x,i;\bar{u}_{1}(\cdot),\bar{u}_{2}(\cdot))\\
&=\mathbb{E} \int_{0}^{\infty}\left\langle\left(\begin{matrix}
Q(\alpha_{t}) & S(\alpha_{t})^{\top} \\
S(\alpha_{t}) & R(\alpha_{t})
\end{matrix}\right)
\left(\begin{matrix}X_{\Theta}(t)\\ \Theta(\alpha_{t})X_{\Theta}(t)\end{matrix}\right),
\left(\begin{matrix}X_{\Theta}(t)\\ \Theta(\alpha_{t})X_{\Theta}(t)\end{matrix}\right)\right\rangle d t\\
&=\mathbb{E} \int_{0}^{\infty}\left\langle\left(Q(\alpha_{t})+S(\alpha_{t})^{\top}\Theta(\alpha_{t})+\Theta(\alpha_{t})^{\top}S(\alpha_{t})+\Theta(\alpha_{t})^{\top}R(\alpha_{t})\Theta(\alpha_{t})
\right)X_{\Theta}(t),X_{\Theta}(t)\right\rangle d t.
\end{aligned}
\end{equation}

Applying It\^o's rule to $\left<P(\alpha_{t})X_{\Theta}(t),X_{\Theta}(t)\right>$, we obtain
\begin{align*}
&\quad \mathbb{E}\left[\left<P(\alpha_{T})X_{\Theta}(T),X_{\Theta}(T)\right>-\left<P(i)x,x\right>\right]\\
&=\mathbb{E}\int_{0}^{T}\Big\{\Big<\big[P(\alpha_{t})A(\alpha_{t})+A(\alpha_{t})^{\top}P(\alpha_{t})+C(\alpha_{t})^{\top}P(\alpha_{t})C(\alpha_{t})+\sum_{j=1}^{L}\pi_{\alpha_{t}j}P(j)\big]X_{\Theta}(t),X_{\Theta}(t)\Big>\\
&\qquad+\Big<\big[P(\alpha_{t})B(\alpha_{t})+C(\alpha_{t})^{\top}P(\alpha_{t})D(\alpha_{t})\big]\Theta(\alpha_{t})X_{\Theta}(t),X_{\Theta}(t)\Big>\\
&\qquad+\Big<\Theta(\alpha_{t})^{\top}\big[B(\alpha_{t})^{\top}P(\alpha_{t})+D(\alpha_{t})^{\top}P(\alpha_{t})C(\alpha_{t})\big]X_{\Theta}(t),X_{\Theta}(t)\Big>\\
&\qquad+\Big<\Theta(\alpha_{t})^{\top}D(\alpha_{t})^{\top}P(\alpha_{t})D(\alpha_{t})\Theta(\alpha_{t})X_{\Theta}(t),X_{\Theta}(t)\Big>\Big\}dt.
\end{align*}
Taking the limit as \( T \to \infty \) yields:
\begin{equation}\label{step-3-2}
\begin{aligned}
&-\left<P(i)x,x\right>=\mathbb{E}\int_{0}^{\infty}\Big\{\Big<\big[\mathcal{M}(\mathbf{P},\alpha_{t})-Q(\alpha_{t})\big]X_{\Theta}(t),X_{\Theta}(t)\Big>
+\Big<\big[\mathcal{L}(\mathbf{P},\alpha_{t})-S(\alpha_{t})^{\top}\big]\Theta(\alpha_{t})X_{\Theta}(t),X_{\Theta}(t)\Big>\\
&\quad+\Big<\Theta(\alpha_{t})^{\top}\big[\mathcal{L}(\mathbf{P},\alpha_{t})^{\top}-S(\alpha_{t})\big]X_{\Theta}(t),X_{\Theta}(t)\Big>
+\Big<\Theta(\alpha_{t})^{\top}\big[\mathcal{N}(\mathbf{P},\alpha_{t})-R(\alpha_{t})\big]\Theta(\alpha_{t})X_{\Theta}(t),X_{\Theta}(t)\Big>\Big\}dt.
\end{aligned}
\end{equation}
Combining equations \eqref{step-3-1} and \eqref{step-3-2} gives
$$
\begin{aligned}
&\quad V_{\infty}(x,i)-\left<P(i)x,x\right>\\
&=\mathbb{E}\int_{0}^{\infty}\left\langle\left(\mathcal{M}(\mathbf{P},\alpha_{t})+\mathcal{L}(\mathbf{P},\alpha_{t})\Theta(\alpha_{t})+\Theta(\alpha_{t})^{\top}\mathcal{L}(\mathbf{P},\alpha_{t})^{\top}
+\Theta(\alpha_{t})^{\top}\mathcal{N}(\mathbf{P},\alpha_{t})\Theta(\alpha_{t})\right)X_{\Theta}(t),X_{\Theta}(t)\right\rangle d t.
\end{aligned}
$$
From the definition of $\Theta(i)$ and noting that $\mathbf{P}$ solves CAREs \eqref{CAREs-infinite-2}, we can verify that 
$$
\mathcal{M}(\mathbf{P},i)+\mathcal{L}(\mathbf{P},i)\Theta(i)+\Theta(i)^{\top}\mathcal{L}(\mathbf{P},i)^{\top}+\Theta(i)^{\top}\mathcal{N}(\mathbf{P},i)\Theta(i)=0,\quad \forall i\in\mathcal{S},
$$
Hence, we have
$$
V_{\infty}(x,i)-\left<P(i)x,x\right>=0,\quad \forall (x,i)\in\mathbb{R}^{n}\times\mathcal{S},
$$
which implies \eqref{value-infinite} and completes the proof.
\end{proof}

Proposition \ref{prop-CAREs-existence} resolves question \textbf{Q3} posed in Remark \ref{rmk-question}. Combining it with Theorems \ref{thm-open-loop-solvable-infty}, \ref{thm-unique-solvability}, and Proposition \ref{prop-CAREs-existence} immediately yields the following corollary.
\begin{corollary}\label{coro-infinite}
Let assumptions (A1)–(A2) hold. Then the following statements are true:
\begin{description}
    \item[(i)] The CAREs \eqref{CAREs-infinite} admit a unique solution $\mathbf{P_{\infty}}\in\mathcal{D}(\mathbb{S}^{n})$ satisfying the condition \eqref{Theta-infinity};
    \item[(ii)] For any initial pair $(x,i)$, Problem (M-ZLQ)$_{\infty}$ admits a unique open-loop saddle strategy $\bar{u}_{\infty}(\cdot)$ with closed-loop representation given by \eqref{closed-loop-representation-infty};
    \item[(iii)] The value function of Problem (M-ZLQ)$_{\infty}$ is given by
    $$
    V_{\infty}(x,i)=\langle P_{\infty}(i)x,x\rangle,\quad \forall (x,i)\in \mathbb{R}^{n}\times\mathcal{S}.
    $$
    \end{description}
\end{corollary}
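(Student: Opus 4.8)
The plan is to assemble the corollary directly from the three results already cited, with essentially no new computation. First I would set $\mathbf{P_{\infty}} \triangleq \mathbf{P} = (P(1),\dots,P(L))$, the limit matrix constructed in Proposition \ref{prop-CAREs-existence}. Part (ii) of that proposition shows $\mathbf{P}$ solves \eqref{CAREs-infinite-2}, which is literally the same system as \eqref{CAREs-infinite}; part (i) gives $\mathcal{N}_{11}(\mathbf{P},i)>0$ and $\mathcal{N}_{22}(\mathbf{P},i)<0$, so $\mathcal{N}(\mathbf{P},i)$ is invertible and the feedback gain $\mathbf{\Theta}$ in \eqref{Theta-infinity} is well defined; and part (iii) gives precisely $\mathbf{\Theta}\in\mathcal{H}[A,C;B,D]_{\alpha}$, i.e. condition \eqref{Theta-infinity}. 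Uniqueness of such a $\mathbf{P_{\infty}}$ is again the uniqueness assertion in Proposition \ref{prop-CAREs-existence}(ii) for the constrained CAREs. This settles item (i).

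For item (ii), with $\mathbf{P_{\infty}}$ in hand and condition \eqref{Theta-infinity} verified, Theorem \ref{thm-open-loop-solvable-infty}(ii) immediately yields that the feedback control \eqref{closed-loop-representation-infty}, driven by the closed-loop state \eqref{closed-loop-state-infinite}, is an open-loop saddle strategy of Problem (M-ZLQ)$_{\infty}$ for every initial pair $(x,i)$; the uniqueness of the open-loop saddle strategy is Theorem \ref{thm-unique-solvability}(ii). Putting these two together gives item (ii). Item (iii) is exactly equation \eqref{value-infinite} established inside the proof of Proposition \ref{prop-CAREs-existence} after the identification $P(i)=P_{\infty}(i)$; alternatively, one may pass to the limit in $V_{T}(x,i)=\langle P_{T}(0,i)x,x\rangle$ (Corollary \ref{coro-T}(iii)) and invoke $\lim_{T\to\infty}V_{T}(x,i)=V_{\infty}(x,i)$, which was already shown in that same proof.

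The only real point requiring care is bookkeeping: one must check that \eqref{CAREs-infinite} and \eqref{CAREs-infinite-2} are the identical system and that the $\mathbf{\Theta}$ defined in Proposition \ref{prop-CAREs-existence}(iii) coincides with the $\mathbf{\Theta}$ appearing in \eqref{Theta-infinity}, so that the hypotheses of Theorem \ref{thm-open-loop-solvable-infty}(ii) are met verbatim. Once this identification is in place, the corollary follows with no further estimates; I expect the write-up to be just a few lines.
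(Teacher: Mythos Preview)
Your proposal is correct and matches the paper's approach exactly: the paper states that the corollary follows immediately by combining Theorems \ref{thm-open-loop-solvable-infty}, \ref{thm-unique-solvability}, and Proposition \ref{prop-CAREs-existence}, which is precisely the assembly you describe. Your additional bookkeeping remarks (identifying \eqref{CAREs-infinite} with \eqref{CAREs-infinite-2} and matching the two definitions of $\mathbf{\Theta}$) are accurate and would make the write-up slightly more explicit than the paper's one-line justification.
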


From the proof of Proposition \ref{prop-CAREs-existence}, we know that for any $i \in \mathcal{S}$, $\left| P_{T}(t,i) \right| \leq K$ for some constant $K > 0$.  Based on this result, we can further derive the following result.

\begin{corollary}
 Let (A1)-(A2) hold, and $\mathbf{P_{T}}(\cdot)\in\mathcal{D}\left(C\left([0,T],\mathbb{S}^{n}\right)\right)$ be the solution to CDREs \eqref{CDREs-T}.
 Then there exists a constant $K>0$ such that
 $$
 \left|\mathcal{N}(t;\mathbf{P_{T}},i)^{-1}\right|\leq K,\quad 0\leq t\leq T<\infty.
 $$
\end{corollary}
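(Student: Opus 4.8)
The plan is to read the bound off the $2\times 2$ block inversion formula for $\mathcal{N}(t;\mathbf{P_T},i)$, using the strongly regular condition \eqref{strongly-regular-condition-T} to control the two diagonal blocks and the uniform estimate $|P_T(t,i)|\le K$ (recorded in the paragraph preceding the statement) to control the off-diagonal block. Throughout, $K>0$ will denote a generic constant independent of $t$ and $T$, possibly changing from line to line. First I would note that, since $\mathcal{N}(t;\mathbf{P_T},i)=D(i)^{\top}P_T(t,i)D(i)+R(i)$ and $|P_T(t,i)|\le K$ for all $0\le t\le T$ and $i\in\mathcal{S}$, the boundedness of the finitely many coefficients $D(i),R(i)$ gives $|\mathcal{N}(t;\mathbf{P_T},i)|\le K$; in particular $|\mathcal{N}_{12}(t;\mathbf{P_T},i)|\le K$, uniformly in $t$, $T$, and $i$.

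Next, writing $N:=\mathcal{N}(t;\mathbf{P_T},i)$ and $N_{kl}:=\mathcal{N}_{kl}(t;\mathbf{P_T},i)$, condition \eqref{strongly-regular-condition-T} gives $N_{11}\ge\delta I$ and $N_{22}\le-\delta I$, so $N_{11}$ is invertible with $|N_{11}^{-1}|\le\delta^{-1}$. The key step is to show that the Schur complement $\Phi:=N_{22}-N_{21}N_{11}^{-1}N_{12}$ inherits the strict negative definiteness of $N_{22}$: since $N_{21}=N_{12}^{\top}$ and $N_{11}^{-1}>0$, the matrix $N_{12}^{\top}N_{11}^{-1}N_{12}$ is positive semidefinite, whence $\Phi\le N_{22}\le-\delta I$. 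In particular $\Phi$ is invertible, and since all its eigenvalues lie in $(-\infty,-\delta]$ we get $|\Phi^{-1}|\le\delta^{-1}$.

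With $N_{11}$ and $\Phi$ both invertible, $N$ is invertible and admits the representation
\[
N^{-1}=\begin{pmatrix} N_{11}^{-1}+N_{11}^{-1}N_{12}\Phi^{-1}N_{12}^{\top}N_{11}^{-1} & -N_{11}^{-1}N_{12}\Phi^{-1}\\ -\Phi^{-1}N_{12}^{\top}N_{11}^{-1} & \Phi^{-1}\end{pmatrix}.
\]
Estimating each of the four blocks using $|N_{11}^{-1}|\le\delta^{-1}$, $|\Phi^{-1}|\le\delta^{-1}$, and $|N_{12}|\le K$ then yields $|N^{-1}|\le K$ for a constant depending only on $\delta$, on $\sup_{i\in\mathcal{S}}\max(|D(i)|,|R(i)|)$, and on the bound for $P_T$, all of which are independent of $t$ and $T$. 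This is exactly the asserted estimate.

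I expect the computation to be routine once the right decomposition is in place; the only point that genuinely needs care is verifying $\Phi\le-\delta I$ (rather than merely that $\Phi$ is invertible), since it is this strict bound that makes $|\Phi^{-1}|$, and hence $|N^{-1}|$, uniform in $t$ and $T$. I would resist trying to bypass the block decomposition by arguing from the quadratic form of $N$ directly, because $N$ is indefinite and $\langle Nv,v\rangle$ can be small for large $v$; the Schur complement is precisely the device that disentangles the positive and negative directions of $N$.
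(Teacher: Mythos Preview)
Your proposal is correct and follows essentially the same approach as the paper: both arguments use the $2\times 2$ block (Schur complement) inversion formula, invoke the strongly regular condition \eqref{strongly-regular-condition-T} to bound $N_{11}^{-1}$ and the Schur complement $\Phi^{-1}$, and use the uniform bound on $P_T(t,i)$ to control $N_{12}$. The only cosmetic difference is that the paper records the constants as $\sqrt{m_1}\,\delta^{-1}$ and $\sqrt{m_2}\,\delta^{-1}$ (reflecting a Frobenius-type norm), whereas you write $\delta^{-1}$; this does not affect the argument.
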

\begin{proof}
  Clearly, by notation \eqref{notation-MLN-infinite}, we have
  $$
\mathcal{N}(t;\mathbf{P},i)= \left(\begin{array}{cc}
\mathcal{N}_{11}(t;\mathbf{P_{T}},i) &\mathcal{N}_{12}(t;\mathbf{P_{T}},i)\\\mathcal{N}_{21}(t;\mathbf{P_{T}},i) &\mathcal{N}_{22}(t;\mathbf{P_{T}},i)
\end{array}\right),\quad \forall i\in\mathcal{S}.
  $$
From \eqref{strongly-regular-condition-T} and \eqref{PT-bound}, we have, for all \(i \in \mathcal{S}\),
\[
\mathcal{N}_{11}(t;\mathbf{P_{T}},i) \geq \delta I, \quad
\mathcal{N}_{22}(t;\mathbf{P_{T}},i) \leq -\delta I, \quad
|\mathcal{N}_{12}(t;\mathbf{P_{T}},i)| = |\mathcal{N}_{21}(t;\mathbf{P_{T}},i)| \leq \rho,
\]
where the positive constants \(\delta,\rho\) are independent of \(t\) and \(T.\)
Moreover, one can easily verify that the inverse of $\mathcal{N}(t;\mathbf{P},i)$ is given by
$$
\left(\begin{matrix}
\widetilde{\mathcal{N}}_{11}(t;\mathbf{P_{T}},i) & \widetilde{\mathcal{N}}_{12}(t;\mathbf{P_{T}},i)\\\widetilde{\mathcal{N}}_{21}(t;\mathbf{P_{T}},i)&\widetilde{\mathcal{N}}_{22}(t;\mathbf{P_{T}},i)
\end{matrix}\right),\quad i\in\mathcal{S},
$$
where
$$\begin{aligned}
&\widetilde{\mathcal{N}}_{11}(t;\mathbf{P_{T}},i)=\mathcal{N}_{11}(t;\mathbf{P_{T}},i)^{-1}+\left(\mathcal{N}_{11}(t;\mathbf{P_{T}},i)^{-1}\mathcal{N}_{12}(t;\mathbf{P_{T}},i)\right)\widetilde{\mathcal{N}}_{22}(t;\mathbf{P_{T}},i)\left(\mathcal{N}_{11}(t;\mathbf{P_{T}},i)^{-1}\mathcal{N}_{12}(t;\mathbf{P_{T}},i)\right)^{\top},\\
&\widetilde{\mathcal{N}}_{12}(t;\mathbf{P_{T}},i)=\widetilde{\mathcal{N}}_{21}(t;\mathbf{P_{T}},i)^{\top}=-\left(\mathcal{N}_{11}(t;\mathbf{P_{T}},i)^{-1}\mathcal{N}_{12}(t;\mathbf{P_{T}},i)\right)\widetilde{\mathcal{N}}_{22}(t;\mathbf{P_{T}},i),\\
&\widetilde{\mathcal{N}}_{22}(t;\mathbf{P_{T}},i)=\left[\mathcal{N}_{22}(t;\mathbf{P_{T}},i)-\mathcal{N}_{21}(t;\mathbf{P_{T}},i)\mathcal{N}_{11}(t;\mathbf{P_{T}},i)^{-1}\mathcal{N}_{12}(t;\mathbf{P_{T}},i)\right]^{-1}.
\end{aligned}$$
Note that for $i\in\mathcal{S}$
$$
\mathcal{N}_{22}(t;\mathbf{P_{T}},i)-\mathcal{N}_{21}(t;\mathbf{P_{T}},i)\mathcal{N}_{11}(t;\mathbf{P_{T}},i)^{-1}\mathcal{N}_{12}(t;\mathbf{P_{T}},i)\leq -\delta I.
$$
Hence, we have
$$
\left|\mathcal{N}_{11}(t;\mathbf{P_{T}},i)^{-1}\right|\leq \sqrt{m_1}\delta^{-1},\quad \left|\widetilde{\mathcal{N}}_{22}(t;\mathbf{P_{T}},i)\right|\leq \sqrt{m_2}\delta^{-1},
$$
which further implies the desired result.
\end{proof}

\section{The turnpike property}\label{section-5}

In this section, we establish the turnpike property \eqref{exponential-turnpike-property-introduction}. Before proceeding, we provide some preparatory results. Theorem \ref{thm-convergence} shows that the solution $\mathbf{P_{T}}(\cdot)$ to CDREs \eqref{CDREs-T} converges to a unique solution $\mathbf{P}$ of CAREs \eqref{CAREs-infinite-2}. 
The following result quantifies the convergence rate.

\begin{theorem}\label{thm-convergence-rate}
Assume (A1)-(A2) hold. Let $\mathbf{P}_T(\cdot)$ be the unique strongly regular solution to CDREs \eqref{CDREs-T}, and let $\mathbf{P}$ be defined by \eqref{P-limit-2}. Then there exist constants $K,\,\mu > 0$, independent of $T$, such that
\begin{equation}\label{convergence-rate}
  \left|P(i)-P_{T}(t,i)\right|\leq Ke^{-\mu(T-t)},\quad \forall i\in\mathcal{S}.
\end{equation}
\end{theorem}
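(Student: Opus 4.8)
The plan is to write $\Lambda_T(t,i)\triangleq P(i)-P_T(t,i)$, derive a Lyapunov-type differential equation for it along the stable closed-loop flow of the infinite-horizon feedback, represent its solution probabilistically, and then close a Gronwall estimate. Concretely, with $\Theta(i)\triangleq-\mathcal{N}(\mathbf{P},i)^{-1}\mathcal{L}(\mathbf{P},i)^{\top}$ from Proposition \ref{prop-CAREs-existence}(iii) and $\widehat{A}(i)\triangleq A(i)+B(i)\Theta(i)$, $\widehat{C}(i)\triangleq C(i)+D(i)\Theta(i)$, I would complete the square around the \emph{fixed} feedback $\Theta(i)$ in both \eqref{CDREs-T} and \eqref{CAREs-infinite-2} and subtract the two resulting relations; since the quadratic term produced by \eqref{CAREs-infinite-2} vanishes (there $\Theta(i)$ is the optimal feedback), $\Lambda_T$ is seen to solve
\[
\dot{\Lambda}_T(t,i)+\Lambda_T(t,i)\widehat{A}(i)+\widehat{A}(i)^{\top}\Lambda_T(t,i)+\widehat{C}(i)^{\top}\Lambda_T(t,i)\widehat{C}(i)+\sum_{j=1}^{L}\pi_{ij}\Lambda_T(t,j)+G_T(t,i)=0,\quad \Lambda_T(T,i)=P(i),
\]
where $G_T(t,i)\triangleq\bigl(\Theta(i)-\Theta_T(t,i)\bigr)^{\top}\mathcal{N}(t;\mathbf{P_T},i)\bigl(\Theta(i)-\Theta_T(t,i)\bigr)$ and $\Theta_T$ is as in \eqref{Theta-T}.

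Next I would pass to a probabilistic representation. Because $\mathbf{\Theta}\in\mathcal{H}[A,C;B,D]_{\alpha}$ (Proposition \ref{prop-CAREs-existence}(iii)), the system $[\widehat{A},\widehat{C}]_{\alpha}$ is $L^{2}$-stable, so by Corollary \ref{coro-AC} its solution $X(\cdot)$ issued from $(s,\xi,i)$ satisfies $\mathbb{E}|X(t)|^{2}\le Ke^{-\mu(t-s)}|\xi|^{2}$ for $t\ge s$, with $K,\mu$ independent of $s$ and $T$. Applying It\^o's formula to $\langle\Lambda_T(t,\alpha_t)X(t),X(t)\rangle$ over $[s,T]$, using the displayed equation, and taking expectations yields
\[
\langle\Lambda_T(s,i)\xi,\xi\rangle=\mathbb{E}\bigl\langle P(\alpha_T)X(T),X(T)\bigr\rangle+\mathbb{E}\int_{s}^{T}\bigl\langle G_T(t,\alpha_t)X(t),X(t)\bigr\rangle\,dt.
\]
Combining $|P(i)|\le K$, the uniform bound $|P_T(t,i)|\le K$ from \eqref{PT-bound}, and the bound $|\mathcal{N}(t;\mathbf{P_T},i)^{-1}|\le K$ from the last corollary of Section \ref{section-4} with the elementary perturbation estimate for matrix inverses, one gets $|\Theta(i)-\Theta_T(t,i)|\le C|\Lambda_T(t,i)|$, hence $|G_T(t,i)|\le C|\Lambda_T(t,i)|^{2}$ with $C$ independent of $T$. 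Setting $g(t)\triangleq\max_{i\in\mathcal{S}}|\Lambda_T(t,i)|$, the representation then delivers
\[
g(s)\le K^{2}e^{-\mu(T-s)}+CK\int_{s}^{T}g(t)^{2}e^{-\mu(t-s)}\,dt,\qquad 0\le s\le T.
\]

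To close the estimate, I would use that $g(t)=\tilde{g}(T-t)$ by \eqref{relation}, where $\tilde{g}(\tau)\triangleq\max_{i}|P(i)-P_{\tau}(0,i)|\to0$ as $\tau\to\infty$ (Theorem \ref{thm-convergence}), with $g\le 2K$ everywhere and $\tilde{g}$ independent of $T$. Fix $\varepsilon>0$ with $CK\varepsilon\le\mu/2$ and $\tau_0$ with $\tilde{g}(\tau)\le\varepsilon$ for $\tau\ge\tau_0$. For $s\le T-\tau_0$, splitting $\int_s^T$ into $[s,T-\tau_0]$ (where $g^{2}\le\varepsilon g$) and $[T-\tau_0,T]$ (where $g\le 2K$ and the exponential weight is $O(e^{-\mu(T-s)})$) gives $g(s)\le K'e^{-\mu(T-s)}+CK\varepsilon\int_{s}^{T}g(t)e^{-\mu(t-s)}\,dt$; for $T-s<\tau_0$ the trivial bound $g(s)\le 2K\le 2Ke^{\mu\tau_0}e^{-\mu(T-s)}$ already suffices. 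Writing $\tau=T-s$ and $\psi(\tau)\triangleq g(T-\tau)e^{\mu\tau}$ turns the former into $\psi(\tau)\le K''+CK\varepsilon\int_{0}^{\tau}\psi(\rho)\,d\rho$, whence $\psi(\tau)\le K''e^{CK\varepsilon\tau}$ by Gronwall's inequality, i.e. $g(T-\tau)\le K''e^{-(\mu-CK\varepsilon)\tau}\le K''e^{-\mu\tau/2}$; together with the trivial case this is \eqref{convergence-rate} (with $\mu$ replaced by $\mu/2$).

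The hard part is the quadratic dependence of the forcing $G_T$ on $\Lambda_T$: the inequality obtained after the probabilistic representation carries $g^{2}$ rather than $g$, so a direct Gronwall argument need not produce a decay rate uniform in $T$. The remedy is to feed in the already-established convergence $\Lambda_T(t,\cdot)\to0$ so as to replace $g^{2}$ by $\varepsilon g$ on the bulk $[s,T-\tau_0]$ of the horizon, with the short window $[T-\tau_0,T]$ near the terminal time contributing only to the exponentially small term; the perturbation bound $|\Theta(i)-\Theta_T(t,i)|\le C|\Lambda_T(t,i)|$, which hinges on the uniform boundedness of $\mathbf{P_T}$ and of $\mathcal{N}(t;\mathbf{P_T},i)^{-1}$, is what makes $G_T$ genuinely quadratic in the quantity to be bounded and is the other ingredient that must be in place.
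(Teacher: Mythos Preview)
Your proposal is correct and follows the same overall route as the paper: derive a Lyapunov equation for $\Lambda_T=P-P_T$ around the infinite-horizon feedback $\Theta$, represent it probabilistically via It\^o's formula against the stable closed-loop flow $[\widehat A,\widehat C]_\alpha$, bound the forcing $G_T$ quadratically in $\Lambda_T$ (using the uniform bounds on $\mathbf{P_T}$ and $\mathcal{N}(t;\mathbf{P_T},i)^{-1}$), and then close the resulting quadratic integral inequality with the help of the already-established convergence $\Lambda_T(0,\cdot)\to0$.

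The only substantive difference is in how the quadratic Gronwall step is closed. The paper stops the representation not at $T$ but at an intermediate time $k$ with $T-k\in[N,N+1]$, so that the terminal contribution $|\Sigma_{T-k}(0)|$ is already below the threshold $\rho=\mu/(2K_1K_2)$; it then sets $h(t)=K_5e^{\mu t}|\Sigma_T(k-t)|$, obtains $h(t)\le \mu/2+\int_0^t e^{-\mu s}h(s)^2\,ds$, and solves this Bernoulli-type inequality exactly via $H(t)=\int_0^t e^{-\mu s}h(s)^2\,ds$ and $d[-1/(\mu/2+H)]\le e^{-\mu t}\,dt$. You instead integrate all the way to $T$, split off a window $[T-\tau_0,T]$ of fixed length (where $g\le 2K$ contributes only $O(e^{-\mu(T-s)})$), linearize $g^2\le\varepsilon g$ on the bulk $[s,T-\tau_0]$, and apply ordinary Gronwall. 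Both closures work and rest on the same ingredient (smallness of $\tilde g(\tau)$ for large $\tau$); the paper's explicit comparison argument is a bit tighter in the constants, while your linearization-then-Gronwall is the more routine device.
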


\begin{proof}
  Let
  $$\Sigma_{T}(t,i)\triangleq P(i)-P_{T}(t,i),\quad i\in\mathcal{S},\quad \text{and}\quad \left|\Sigma_{T}(t)\right|\triangleq \max_{i\in\mathcal{S}}\left|\Sigma_{T}(t,i)\right|.$$
  Then we only need to verify
  \begin{equation}\label{rate-0}
  \left|\Sigma_{T}(t)\right|\leq Ke^{-\mu(T-t)}.
  \end{equation}
  By some direct computations, we have
  \begin{equation}\label{rate-1}
  \begin{aligned}
\dot{\Sigma}_{T}(t,i)&=-\Sigma_{T}(t,i)\left[A(i)+B(i)\Theta(i)\right]-\left[A(i)+B(i)\Theta(i)\right]^{\top}\Sigma_{T}(t,i)-\sum_{j=1}^{L}\pi_{ij}\Sigma_{T}(t,j)\\
&\quad -\left[C(i)+D(i)\Theta(i)\right]^{\top}\Sigma_{T}(t,i)\left[C(i)+D(i)\Theta(i)\right]\\
&\quad -\left[\Theta(i)-\Theta_{T}(t,i)\right]^{\top}\mathcal{N}(t;\mathbf{P_{T}},i)\left[\Theta(i)-\Theta_{T}(t,i)\right],
   \end{aligned}
   \end{equation}
  where
  \begin{align*}
  \Theta(i)=-\mathcal{N}(\mathbf{P},i)^{-1}\mathcal{L}(\mathbf{P},i)^{\top},\quad
  \Theta_{T}(t,i)=-\mathcal{N}(t;\mathbf{P_{T}},i)^{-1}\mathcal{L}(t;\mathbf{P_{T}},i)^{\top}.
\end{align*}

Let $\Phi(\cdot;t,i)$ be the solution to SDE
\begin{equation}\label{rate-2}
\left\{
\begin{aligned}
&d\Phi(s;t,i)=\left[A(\alpha_{s})+B(\alpha_{s})\Theta(\alpha_{s})\right]\Phi(s;t,i)ds+\left[C(\alpha_{s})+D(\alpha_{s})\Theta(\alpha_{s})\right]\Phi(s;t,i)dW(s)\\
&\Phi(t;t,i)=I_{n},\quad \alpha_{t}=i.
\end{aligned}
\right.
\end{equation}
Applying It\^o's rule to $\left<\Sigma(s,\alpha_{s})\Phi(s;t,i),\Phi(s;t,i)\right>$ yields 
\begin{equation}\label{rate-3}
\begin{aligned}
  \Sigma_{T}(t,i)&=\mathbb{E}\Big\{\int_{t}^{k}\big<\left[\Theta(\alpha_{s})-\Theta_{T}(s,\alpha_{s})\right]^{\top}\mathcal{N}(s;\mathbf{P_{T}},\alpha_{s})\left[\Theta(\alpha_{s})-\Theta_{T}(s,\alpha_{s})\right]\Phi(s;t,i),\Phi(s;t,i)\big>ds\\
  &\quad +\big<\Sigma_{T}(k,\alpha_{k})\Phi(k;t,i),\Phi(k;t,i)\big>\Big\},\quad 0\leq t\leq k\leq T,\quad \forall i\in\mathcal{S}.
  \end{aligned}
\end{equation}
From
\begin{equation}\label{rate-Theta}
\Theta(i)-\Theta_{T}(t,i)=-\mathcal{N}(t;\mathbf{P_{T}},i)^{-1}\left[B(i)^{\top}\Sigma_{T}(t,i)+D(i)^{\top}\Sigma_{T}(t,i)\left(C(i)+D(i)\Theta(i)\right)\right],\quad i\in\mathcal{S},
\end{equation}
and the bound
$$
\left|\mathcal{N}(t;\mathbf{P_{T}},i)^{-1}\right|\leq K,\quad \forall 0\leq t\leq T<\infty,\quad \forall i\in\mathcal{S},\quad \text{for some }K>0,
$$
it follows that 
\begin{equation}\label{rate-4}
\left|\left[\Theta(\alpha_{t})-\Theta_{T}(t,\alpha_{t})\right]^{\top}\mathcal{N}(t;\mathbf{P_{T}},\alpha_{t})\left[\Theta(\alpha_{t})-\Theta_{T}(t,\alpha_{t})\right]\right|\leq K_{1}\left|\Sigma_{T}(t)\right|,\quad \text{for some }K_{1}>0.
\end{equation}
By 
Proposition \ref{prop-CAREs-existence}, the system $[A+B\Theta,C+D\Theta]_{\alpha}$ is $L^{2}$-stable. Therefore, by Corollary \ref{coro-AC}, there exist a constant $K_{2},\,\,\mu>0$ such that
\begin{equation}\label{rate-5}
\mathbb{E}\left|\Phi(s;t,i)\right|^{2}<K_{2}e^{-\mu(s-t)}.
\end{equation}
Moreover, noting that $\lim_{T\to \infty}\Sigma_T(0,i)=0$. Therefore, 
we can choose an integer $N > 0$ such that
$$
K_{2}\left|\Sigma_{T}(0)\right|\leq \rho\triangleq \frac{\mu}{2K_{1}K_{2}},\quad \forall T\geq N.
$$

Now, we begin to verify \eqref{rate-0} holds. For the case $0\leq t\leq T\leq 2N$, the relation \eqref{relation} implies that
$$
\left|\Sigma_{T}(t)\right|=\left|\Sigma_{T-t}(0)\right|\leq  \max_{s\in[0,2N]}\left|\Sigma_{s}(0)\right|\triangleq K_{3}\leq \left(K_{3}e^{2N\mu}\right)e^{-\mu(T-t)}.
$$
For the case $T>2N$, let $k\geq N$ be the integer such that
$$
N+k\leq T<N+k+1.
$$
Then we divide the proof of \eqref{rate-0}  into the following steps.

Step 1, for $t\in[k,T]$ and $T>2N$, we have $0\leq T-t\leq T-k\leq N+1$, which further yields
$$
\left|\Sigma_{T}(t)\right|=\left|\Sigma_{T-t}(0)\right|\leq  \max_{s\in[0,N+1]}\left|\Sigma_{s}(0)\right|\triangleq K_{4}\leq \left(K_{4}e^{\mu(N+1)}\right)e^{-\mu(T-t)}.
$$

Step 2, for $t\in[0,k]$ and $T>2N$, it follows from \eqref{rate-3}-\eqref{rate-5} that
\begin{equation}\label{rate-6}
  \begin{aligned}
 \left| \Sigma_{T}(t)\right|&\leq \left|\Sigma_{T-k}(0)\right|\mathbb{E}\left|\Phi(k;t,i)\right|^{2}+K_{1}\int_{t}^{k}\mathbb{E}\left| \Sigma_{T}(s)\right|^{2}\left|\Phi(s;t,i)\right|^{2}ds\\
 &\leq \rho e^{-\mu(k-t)}+K_{1}K_{2}\int_{t}^{k}e^{-\mu(s-t)}\left| \Sigma_{T}(s)\right|^{2}ds.
  \end{aligned}
\end{equation}
Let $K_{5}=K_{1}K_{2}$ and
\begin{equation}\label{rate-7}
h(t)\triangleq K_{5}e^{\mu t}\left|\Sigma_{T}(k-t)\right|,\quad 0\leq t\leq k.
\end{equation}
Then equation \eqref{rate-6} yields
$$
h(t)\leq K_{5}\rho+\int_{0}^{t}e^{-\mu s}h(s)^{2}ds=\frac{\mu}{2}+\int_{0}^{t}e^{-\mu s}h(s)^{2}ds,\quad 0\leq t\leq k.
$$
Now, set
$$
H(t)\triangleq \int_{0}^{t}e^{-\mu s}h(s)^{2}ds,\quad 0\leq t\leq k.
$$
Then, $H(0)=0$ and $h(t)\leq \dfrac{\mu}{2}+H(t)$, which further implies
$$
e^{\mu t}H^{\prime}(t)=h(t)^{2}\leq \left[\frac{\mu}{2}+H(t)\right]^{2},\quad 0\leq t\leq k.
$$
Consequently, we have
$$
d\left[-\frac{1}{\frac{\mu}{2}+H(t)}\right]=\frac{H^{\prime}(t)}{\left[\frac{\mu}{2}+H(t)\right]^{2}}\leq e^{-\mu t},\quad 0\leq t\leq k.
$$
Integrating both sides yields
$$
\frac{2}{\mu}-\frac{1}{\frac{\mu}{2}+H(t)}\leq \int_{0}^{t}e^{-\mu s}ds\leq \dfrac{1}{\mu},\quad 0\leq t\leq k,
$$
which implies $H(t)\leq \dfrac{\mu}{2}$ and hence $h(t)\leq \mu$ for $t\in[0,k]$. Consequently, for $t\in[0,k]$, by \eqref{rate-7}, we have
$$
\left|\Sigma_{T}(t)\right|=\frac{1}{K_{5}}e^{-\mu(k-t)}h(k-t)\leq\frac{\mu}{K_{5}}e^{-\mu(k-t)}=\frac{\mu}{K_{5}}e^{\mu(T-k)}e^{-\mu(T-t)}\leq \frac{\mu}{K_{5}}e^{\mu(N+1)}e^{-\mu(T-t)}.
$$
To sum up, we have completed the proof.
\end{proof}

By Theorem \ref{thm-convergence}, the matrix $\mathbf{P}$ defined in \eqref{P-limit-2} is the unique solution to CAREs \eqref{CAREs-infinite}. 
Therefore, result \eqref{convergence-rate} can be restated as:
\begin{equation}\label{convergence-rate-2}
  \left|P_{\infty}(i)-P_{T}(t,i)\right|\leq Ke^{-\mu(T-t)},\quad \forall i\in\mathcal{S},
\end{equation}
and  equation \eqref{rate-Theta} further implies that
\begin{equation}\label{rate-Theta-2}
\left|\Theta_{\infty}(i)-\Theta_{T}(t,i)\right|\leq Ke^{-\mu(T-t)},\quad \forall i\in\mathcal{S},
\end{equation}
 for some $K,\,\,\mu>0$, independent of $T$.

Based on the above result, we also present the turnpike property for Problem (M-ZLQ)$_{T}$.
\begin{theorem}\label{thm-turnpike-property}
 Let (A1)-(A2) hold. Then for any initial pair $(x,i)$, there exist constants $K > 0$ and $\mu > 0$, independent of $T$, such that
 \begin{equation}\label{exponential-turnpike-property}
\mathbb{E}\left[
\left|\bar{X}_{T}(t)-\bar{X}_{\infty}(t)\right|^{2}+\left|\bar{u}_{1,T}(t)-\bar{u}_{1,\infty}(t)\right|^{2}+\left|\bar{u}_{2,T}(t)-\bar{u}_{2,\infty}(t)\right|^{2}
\right]\leq K\left|x\right|^{2}\left[e^{-\mu(T-t)}+e^{-\mu t}\right],
\end{equation}
where $(\bar{X}_{T}(\cdot),\bar{u}_{1,T}(\cdot),\bar{u}_{2,T}(\cdot))$ (respectively, $(\bar{X}_{\infty}(\cdot),\bar{u}_{1,\infty}(\cdot),\bar{u}_{2,\infty}(\cdot))$) is the corresponding optimal triple of Problem (M-ZLQ)$_{T}$ (respectively, Problem (M-ZLQ)$_{\infty}$).
\end{theorem}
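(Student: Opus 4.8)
The plan is to reduce everything to a single exponentially stable linear SDE with a small, exponentially decaying forcing term, and then run a Gronwall-type estimate. By Corollary~\ref{coro-T}(ii) and Corollary~\ref{coro-infinite}(ii), the two optimal triples admit the closed-loop representations $\bar u_T(t)=\Theta_T(t,\alpha_t)\bar X_T(t)$ and $\bar u_\infty(t)=\Theta_\infty(\alpha_t)\bar X_\infty(t)$, where $\bar X_T$ and $\bar X_\infty$ solve the closed-loop systems \eqref{closed-loop-state-T} and \eqref{closed-loop-state-infinite} with the common initial datum $(x,i)$. Since the system $[A+B\Theta_\infty,C+D\Theta_\infty]_\alpha$ is $L^2$-stable by Proposition~\ref{prop-CAREs-existence}(iii), Corollary~\ref{coro-AC} furnishes constants $K,\mu>0$ with $\mathbb{E}|\bar X_\infty(t)|^2\le K e^{-\mu t}|x|^2$ for all $t\ge 0$.

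First I would set $\eta_T(\cdot)\triangleq\bar X_T(\cdot)-\bar X_\infty(\cdot)$ and $\Delta_T(t,i)\triangleq\Theta_T(t,i)-\Theta_\infty(i)$, and subtract the two closed-loop SDEs. A direct computation shows that $\eta_T$ solves, on $[0,T]$,
\begin{equation*}
\left\{
\begin{aligned}
d\eta_T(t)&=\big[(A(\alpha_t)+B(\alpha_t)\Theta_\infty(\alpha_t))\eta_T(t)+b_T(t)\big]dt+\big[(C(\alpha_t)+D(\alpha_t)\Theta_\infty(\alpha_t))\eta_T(t)+\sigma_T(t)\big]dW(t),\\
\eta_T(0)&=0,\quad\alpha_0=i,
\end{aligned}
\right.
\end{equation*}
with $b_T(t)=B(\alpha_t)\Delta_T(t,\alpha_t)\bar X_T(t)$ and $\sigma_T(t)=D(\alpha_t)\Delta_T(t,\alpha_t)\bar X_T(t)$. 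Since $[A+B\Theta_\infty,C+D\Theta_\infty]_\alpha$ is $L^2$-stable, the estimate of Proposition~\ref{prop-AC-estimation} applies to it in place of $[A,C]_\alpha$ (its proof uses only mean-square exponential stability, which by Corollary~\ref{coro-AC} is equivalent to $L^2$-stability), giving $\mathbb{E}|\eta_T(t)|^2\le K\,\mathbb{E}\int_0^t(|b_T(s)|^2+|\sigma_T(s)|^2)\,ds$; combining this with the convergence rate \eqref{rate-Theta-2} and the boundedness of $B(\cdot),D(\cdot)$ yields $\mathbb{E}|\eta_T(t)|^2\le K\int_0^t e^{-2\mu(T-s)}\,\mathbb{E}|\bar X_T(s)|^2\,ds$ for a possibly enlarged $K$.

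The core step is a bootstrap on this inequality. Using $\mathbb{E}|\bar X_T(s)|^2\le 2\mathbb{E}|\eta_T(s)|^2+2\mathbb{E}|\bar X_\infty(s)|^2$, first bounding $\mathbb{E}|\bar X_\infty(s)|^2$ crudely by $K|x|^2$ and applying Gronwall (with $\int_0^t e^{-2\mu(T-s)}\,ds\le\tfrac1{2\mu}$) gives the uniform bound $\sup_{0\le t\le T}\mathbb{E}|\bar X_T(t)|^2\le K|x|^2$, independent of $T$. Feeding the sharper bound $\mathbb{E}|\bar X_\infty(s)|^2\le K e^{-\mu s}|x|^2$ back in and using $\int_0^t e^{-2\mu(T-s)}e^{-\mu s}\,ds\le\tfrac1\mu e^{-\mu(T-t)}$, one obtains $\mathbb{E}|\eta_T(t)|^2\le c|x|^2e^{-\mu(T-t)}+2K\int_0^t e^{-2\mu(T-s)}\mathbb{E}|\eta_T(s)|^2\,ds$; a second application of Gronwall (the forcing being nondecreasing in $t$) then yields $\mathbb{E}|\eta_T(t)|^2\le K|x|^2 e^{-\mu(T-t)}$. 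This already settles the state part of \eqref{exponential-turnpike-property}, and is in fact stronger; the extra $e^{-\mu t}$ term in the statement is only kept to present the estimate in its standard turnpike form.

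Finally, for the controls I would write $\bar u_T(t)-\bar u_\infty(t)=\Theta_T(t,\alpha_t)\eta_T(t)+\Delta_T(t,\alpha_t)\bar X_\infty(t)$ and invoke the uniform bound $|\Theta_T(t,i)|\le K$ from \eqref{Theta-T-bounded}, the rate \eqref{rate-Theta-2}, the estimate $\mathbb{E}|\bar X_\infty(t)|^2\le K e^{-\mu t}|x|^2$, and the bound on $\mathbb{E}|\eta_T(t)|^2$ just obtained, to get $\mathbb{E}|\bar u_T(t)-\bar u_\infty(t)|^2\le K|x|^2 e^{-\mu(T-t)}$; restricting to the first $m_1$ and last $m_2$ coordinates gives the same bound for $\bar u_{1,T}-\bar u_{1,\infty}$ and $\bar u_{2,T}-\bar u_{2,\infty}$. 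Summing the three contributions and bounding $e^{-\mu(T-t)}$ by $e^{-\mu(T-t)}+e^{-\mu t}$ then gives \eqref{exponential-turnpike-property}. I expect the main obstacle to be the self-referential nature of the $\eta_T$-equation — its forcing involves $\bar X_T$ itself — which is what forces the two-stage Gronwall argument above, together with the bookkeeping needed to keep all constants $K,\mu$ independent of $T$ throughout.
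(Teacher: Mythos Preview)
Your proof is correct and shares the paper's high-level strategy: subtract the two closed-loop SDEs, view $\eta_T=\bar X_T-\bar X_\infty$ as a perturbation of the $L^2$-stable system $[A+B\Theta_\infty,C+D\Theta_\infty]_\alpha$ with forcing of order $|\Theta_T-\Theta_\infty|\cdot|\bar X_T|$, and invoke the rate \eqref{rate-Theta-2}. The executions differ in two places. First, the paper exploits stability via a Lyapunov functional: it takes $\mathbf\Sigma\in\mathcal D(\mathbb S^n_+)$ witnessing $L^2$-stability, applies It\^o's rule to $\langle\Sigma(\alpha_t)\eta_T(t),\eta_T(t)\rangle$, and obtains a differential inequality, whereas you quote the integral estimate of Proposition~\ref{prop-AC-estimation} (applied to the shifted system) and run Gronwall directly. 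Second, the paper feeds in the pointwise decay $\mathbb E|\bar X_T(t)|^2\le K|x|^2e^{-\mu t}$---which itself needs a small perturbation argument since $\Theta_T(t,\cdot)$ is time-varying---while you sidestep this with your two-stage bootstrap, first extracting a uniform bound on $\mathbb E|\bar X_T|^2$ and then using only the decay of $\bar X_\infty$. The Lyapunov route gives the sharper intermediate estimate $\mathbb E|\eta_T(t)|^2\le K|x|^2e^{-\mu T}$ (which the paper then weakens to the stated turnpike form), while yours yields $K|x|^2e^{-\mu(T-t)}$; both are enough for \eqref{exponential-turnpike-property}, and your argument is arguably more self-contained since it does not require separately establishing exponential decay of the time-inhomogeneous closed-loop state $\bar X_T$.
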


\begin{proof}
Let $\mathcal{X}_{T}(\cdot)\triangleq \bar{X}_{T}(\cdot)-\bar{X}_{\infty}(\cdot)$,
 $
 \bar{u}_{T}(\cdot)=\left(\begin{array}{c}\bar{u}_{1,T}(\cdot)\\ \bar{u}_{2,T}(\cdot)\end{array}\right),\   \bar{u}_{\infty}(\cdot)=\left(\begin{array}{c}\bar{u}_{1,\infty}(\cdot)\\ \bar{u}_{2,\infty}(\cdot)\end{array}\right),
 $
and set
\begin{align*}
 & A_{\Theta_{\infty}}(i)=A(i)+B(i)\Theta_{\infty}(i),\quad C_{\Theta_{\infty}}(i)=C(i)+D(i)\Theta_{\infty}(i),\\
 & A_{\Theta_{T}}(t,i)=A(i)+B(i)\Theta_{T}(t,i),\quad C_{\Theta_{T}}(t,i)=C(i)+D(i)\Theta_{T}(t,i),
\end{align*}
where $\mathbf{\Theta}$ and $\mathbf{\Theta_{T}}(\cdot)$ are defined in \eqref{Theta-T} and \eqref{Theta-infinity}, respectively.
From \eqref{closed-loop-state-T} and \eqref{closed-loop-state-infinite}, one has
\begin{equation}\label{closed-loop-state-X}
   \left\{
   \begin{aligned}
   &d\mathcal{X}_{T}(t)=\left[A_{\Theta_{T}}(t,\alpha_{t})\bar{X}_{T}(t)-A_{\Theta_{\infty}}(\alpha_{t})\bar{X}_{\infty}(t)\right]dt
   +\left[C_{\Theta_{T}}(t,\alpha_{t})\bar{X}_{T}(t)-C_{\Theta_{\infty}}(\alpha_{t})\bar{X}_{\infty}(t)\right]dW(t)\\
   &\mathcal{X}_{T}(0)=0,\quad \alpha_{0}=i,
   \end{aligned}
   \right.
  \end{equation}
  Note that the system $[A_{\Theta_{\infty}},C_{\Theta_{\infty}}]_{\alpha}$ is $L^{2}$-stable. Then by Proposition 2.2 in \cite{My-paper-2025-infinite-M-SLQ}, there exist a $\mathbf{\Sigma}\in \mathcal{D}\left(\mathbb{S}^{n}_+\right)$ and $\mu_{1}>0$ such that
  $$
  \Sigma(i)A_{\Theta_{\infty}}(i)+A_{\Theta_{\infty}}(i)^{\top}\Sigma(i)+C_{\Theta_{\infty}}(i)^{\top}\Sigma(i)C_{\Theta_{\infty}}(i)+\sum_{j=1}^{L}\pi_{ij}\Sigma(j)\leq -2\mu_{1}\Sigma(i),\quad i\in\mathcal{S}.
  $$
  Additionally, by \eqref{rate-Theta-2} and Corollary \ref{coro-AC}, one has
  \begin{align*}
      \left|\Theta_{\infty}-\Theta_{T}(t)\right|\triangleq \max_{i\in\mathcal{S}}\left|\Theta_{\infty}(i)-\Theta_{T}(t,i)\right|\leq K e^{-\mu_{2}(T-t)},\quad
      \mathbb{E} \left|X_{T}(t)\right|^{2}\leq K\left|x\right|^{2} e^{-\mu_{3}t},
  \end{align*}
  for some $\mu_{2},\,\,\mu_{3}>0$.
  Let $\mu\triangleq\min\{\mu_{1},\mu_{2},\mu_{3}\}$ and $K>0$ be a constant that may assume different values in each row. Applying It\^o's rule to $\left<\Sigma(\alpha_{t})\mathcal{X}_{T}(t),\mathcal{X}_{T}(t)\right>$ yields
  \begin{align*}
  &\quad \frac{d}{dt}\mathbb{E}\left|\Sigma(\alpha_{t})^{\frac{1}{2}}\mathcal{X}_{T}(t)\right|^{2}=\frac{d}{dt}\mathbb{E}\big<\Sigma(\alpha_{t})\mathcal{X}_{T}(t),\mathcal{X}_{T}(t)\big>\\
  &=\mathbb{E}\Big[\big<\Sigma(\alpha_{t})\left(A_{\Theta_{T}}(t,\alpha_{t})\bar{X}_{T}(t)-A_{\Theta_{\infty}}(\alpha_{t})\bar{X}_{\infty}(t)\right),\mathcal{X}_{T}(t)\big>
  +\big<\sum_{j=1}^{L}\pi_{\alpha_{t}j}\Sigma(j)\mathcal{X}_{T}(t),\mathcal{X}_{T}(t)\big>\\
  &\quad+\big<\Sigma(\alpha_{t})\left(C_{\Theta_{T}}(t,\alpha_{t})\bar{X}_{T}(t)-C_{\Theta_{\infty}}(\alpha_{t})\bar{X}_{\infty}(t)\right),C_{\Theta_{T}}(t,\alpha_{t})\bar{X}_{T}(t)-C_{\Theta_{\infty}}(\alpha_{t})\bar{X}_{\infty}(t)\big>\\
    &\quad+\big<\Sigma(\alpha_{t})\mathcal{X}_{T}(t),A_{\Theta_{T}}(t,\alpha_{t})\bar{X}_{T}(t)-A_{\Theta_{\infty}}(\alpha_{t})\bar{X}_{\infty}(t)\big>\Big]\\
    &=\mathbb{E}\Big[\big<\big(\Sigma(\alpha_{t})A_{\Theta_{\infty}}(\alpha_{t})+A_{\Theta_{\infty}}(\alpha_{t})^{\top}\Sigma(\alpha_{t})+C_{\Theta_{\infty}}(\alpha_{t})^{\top}\Sigma(\alpha_{t})C_{\Theta_{\infty}}(\alpha_{t})
    +\sum_{j=1}^{L}\pi_{\alpha_{t}j}\Sigma(j)\big)\mathcal{X}_{T}(t),\mathcal{X}_{T}(t)\big>\\
  &\quad+\big<\Sigma(\alpha_{t})\big(A_{\Theta_{T}}(t,\alpha_{t})-A_{\Theta_{\infty}}(\alpha_{t})\big)\bar{X}_{T}(t),\mathcal{X}_{T}(t)\big>
  +\big<\Sigma(\alpha_{t})\mathcal{X}_{T}(t),\big(A_{\Theta_{T}}(t,\alpha_{t})-A_{\Theta_{\infty}}(\alpha_{t})\big)\bar{X}_{T}(t)\big>\\
  &\quad+\big<\Sigma(\alpha_{t})\big(C_{\Theta_{T}}(t,\alpha_{t})-C_{\Theta_{\infty}}(\alpha_{t})\big)\bar{X}_{T}(t),\big(C_{\Theta_{T}}(t,\alpha_{t})-C_{\Theta_{\infty}}(\alpha_{t})\big)\bar{X}_{T}(t)\big>\\
    &\quad+2\big<\Sigma(\alpha_{t})\big(C_{\Theta_{T}}(t,\alpha_{t})-C_{\Theta_{\infty}}(\alpha_{t})\big)\bar{X}_{T}(t),C_{\Theta_{\infty}}(\alpha_{t})\mathcal{X}_{T}(t)\big>\Big]\\
    &\leq -2\mu_{1}\mathbb{E}|\Sigma(\alpha_{t})^{\frac{1}{2}}\mathcal{X}_{T}(t)|^{2}
    +K|\Theta_{\infty}-\Theta_{T}(t)|\mathbb{E}\left[|\Sigma(\alpha_{t})^{\frac{1}{2}}\mathcal{X}_{T}(t)||\bar{X}_{T}(t)|\right]\\
    &\quad +K|\Theta_{\infty}-\Theta_{T}(t)|^{2}\mathbb{E}|\bar{X}_{T}(t)|^{2}\\
    &\leq -\mu_{1}\mathbb{E}|\Sigma(\alpha_{t})^{\frac{1}{2}}\mathcal{X}_{T}(t)|^{2}+K|x|^{2}e^{-2\mu_{2}(T-t)}e^{-\mu_{3} t}\\
    &\leq -\mu\mathbb{E}|\Sigma(\alpha_{t})^{\frac{1}{2}}\mathcal{X}_{T}(t)|^{2}+K|x|^{2}e^{-2\mu(T-t)}e^{-\mu t}\\
    &\leq -\mu\mathbb{E}|\Sigma(\alpha_{t})^{\frac{1}{2}}\mathcal{X}_{T}(t)|^{2}+K|x|^{2}e^{-\mu(T-t)}e^{-\mu t}.
  \end{align*}
By Gronwall's inequality, it follows that:
\begin{align*}
 \mathbb{E}\left|\Sigma(\alpha_{t})^{\frac{1}{2}}\mathcal{X}_{T}(t)\right|^{2} & \leq K\left|x\right|^{2}\int_{0}^{t}e^{-\mu(t-r)}e^{-\mu(T-r)}e^{-\mu r}dr=K\left|x\right|^{2}e^{-\mu t}\int_{0}^{t}e^{-\mu(T-r)}dr\\
 &\leq K\left|x\right|^{2}e^{-\mu t} e^{-\mu (T-t)}\leq K\left|x\right|^{2}e^{-\mu t/2} e^{-\mu (T-t)/2}\\
 &\leq K\left|x\right|^{2}(e^{-\mu t} + e^{-\mu (T-t)}).
\end{align*}

  Consequently, we have
  $$
 \mathbb{E}\left|\bar{X}_{T}(t)-\bar{X}_{\infty}(t)\right|^{2}= \mathbb{E}\left|\mathcal{X}_{T}(t)\right|^{2}\leq K\left|x\right|^{2}\left[e^{-\mu(T-t)}+e^{-\mu t}\right].
  $$

  On the other hand, by Corollary \ref{coro-T} and Corollary \ref{coro-infinite}, we have
\begin{align*}
 &\bar{u}_{T}(t)=\Theta_{T}(t,\alpha_{t})\bar{X}_{T}(t),\quad t\in[0,T],\\
 &\bar{u}_{\infty}(t)=\Theta_{\infty}(\alpha_{t})\bar{X}_{\infty}(t),\quad t\in[0,\infty),
\end{align*}
which implies
\begin{align*}
 \mathbb{E}\left|\bar{u}_{T}(t)-\bar{u}_{\infty}(t)\right|^{2}&= \mathbb{E}\left|\Theta_{T}(t,\alpha_{t})\bar{X}_{T}(t)-\Theta_{\infty}(\alpha_{t})\bar{X}_{\infty}(t)\right|^{2}\\
 &\leq 2\mathbb{E}\left[\left|\Theta_{T}(t,\alpha_{t})-\Theta_{\infty}(\alpha_{t})\right|^{2}\left|\bar{X}_{T}(t)\right|^{2}\right]+2\mathbb{E}\left[\left|\Theta_{\infty}(\alpha_{t})\right|^{2}\left|\mathcal{X}_{T}(t)\right|^{2}\right]\\
 &\leq K\left|x\right|^{2}\left[e^{-\mu(T-t)}+e^{-\mu t}\right].
\end{align*}
To sum up, we have completed the proof.
\end{proof}


\begin{thebibliography}{30}
\providecommand{\natexlab}[1]{#1}
\providecommand{\url}[1]{\texttt{#1}}
\expandafter\ifx\csname urlstyle\endcsname\relax
  \providecommand{\doi}[1]{doi: #1}\else
  \providecommand{\doi}{doi: \begingroup \urlstyle{rm}\Url}\fi

\bibitem[Bayraktar and Yao(2013)]{bayraktar2013weak}
Erhan Bayraktar and Song Yao.
\newblock A weak dynamic programming principle for zero-sum stochastic differential games with unbounded controls.
\newblock \emph{SIAM Journal on Control and Optimization}, 51\penalty0 (3):\penalty0 2036--2080, 2013.

\bibitem[Breiten and Pfeiffer(2020)]{breiten2020turnpike}
Tobias Breiten and Laurent Pfeiffer.
\newblock On the turnpike property and the receding-horizon method for linear-quadratic optimal control problems.
\newblock \emph{SIAM Journal on Control and Optimization}, 58\penalty0 (2):\penalty0 1077--1102, 2020.

\bibitem[Buckdahn and Li(2008)]{buckdahn2008stochastic}
Rainer Buckdahn and Juan Li.
\newblock Stochastic differential games and viscosity solutions of {H}amilton-{J}acobi-{B}ellman-{I}saacs equations.
\newblock \emph{SIAM Journal on Control and Optimization}, 47\penalty0 (1):\penalty0 444--475, 2008.

\bibitem[Damm et~al.(2014)Damm, Grune, Stieler, and Worthmann]{damm2014exponential}
Tobias Damm, Lars Grune, Marleen Stieler, and Karl Worthmann.
\newblock An exponential turnpike theorem for dissipative discrete time optimal control problems.
\newblock \emph{SIAM Journal on Control and Optimization}, 52\penalty0 (3):\penalty0 1935--1957, 2014.

\bibitem[Dorfman et~al.(1958)Dorfman, Samuelson, and Solow]{dorfman1987linear}
Robert Dorfman, Paul~Anthony Samuelson, and Robert~M Solow.
\newblock \emph{Linear {P}rogramming and {E}conomic {A}nalysis}.
\newblock McGraw-Hill, New York, 1958.

\bibitem[Fleming and Souganidis(1989)]{fleming1989existence}
Wendell~H Fleming and Panagiotis~E Souganidis.
\newblock On the existence of value functions of two-player, zero-sum stochastic differential games.
\newblock \emph{Indiana University Mathematics Journal}, 38\penalty0 (2):\penalty0 293--314, 1989.

\bibitem[Gr{\"u}ne and Guglielmi(2018)]{grune2018turnpike}
Lars Gr{\"u}ne and Roberto Guglielmi.
\newblock Turnpike properties and strict dissipativity for discrete time linear quadratic optimal control problems.
\newblock \emph{SIAM Journal on Control and Optimization}, 56\penalty0 (2):\penalty0 1282--1302, 2018.

\bibitem[Hamad\'ene and Lepeltier(1995)]{hamadene1995zero}
Sa{\i}d Hamad\'ene and Jean-Pierre Lepeltier.
\newblock Zero-sum stochastic differential games and backward equations.
\newblock \emph{Systems \& Control Letters}, 24\penalty0 (4):\penalty0 259--263, 1995.

\bibitem[Lou and Wang(2019)]{lou2019turnpike}
Hongwei Lou and Weihan Wang.
\newblock Turnpike properties of optimal relaxed control problems.
\newblock \emph{ESAIM: Control, Optimisation and Calculus of Variations}, 25:\penalty0 74, 2019.

\bibitem[Lv(2020)]{lv2020two}
Siyu Lv.
\newblock Two-player zero-sum stochastic differential games with regime switching.
\newblock \emph{Automatica}, 114:\penalty0 108819, 2020.

\bibitem[Mei et~al.(2025)Mei, Wang, and Yong]{mei2025turnpike}
Hongwei Mei, Rui Wang, and Jiongmin Yong.
\newblock Turnpike property of stochastic linear-quadratic optimal control problems in large horizons with regime switching i: Homogeneous cases.
\newblock \emph{arXiv preprint arXiv:2506.09337}, 2025.

\bibitem[Mou et~al.(2006)Mou, Yong, et~al.]{mou2006two}
Libin Mou, Jiongmin Yong, et~al.
\newblock Two-person zero-sum linear quadratic stochastic differential games by a {H}ilbert space method.
\newblock \emph{Journal of Industrial and Management Optimization}, 2\penalty0 (1):\penalty0 93--115, 2006.

\bibitem[Neumann(1945)]{neumann1945model}
J~v Neumann.
\newblock A model of general economic equilibrium.
\newblock \emph{The Review of Economic Studies}, 13\penalty0 (1):\penalty0 1--9, 1945.

\bibitem[Porretta and Zuazua(2013)]{porretta2013long}
Alessio Porretta and Enrique Zuazua.
\newblock Long time versus steady state optimal control.
\newblock \emph{SIAM Journal on Control and Optimization}, 51\penalty0 (6):\penalty0 4242--4273, 2013.

\bibitem[Ramsey(1928)]{ramsey1928mathematical}
Frank~Plumpton Ramsey.
\newblock A mathematical theory of saving.
\newblock \emph{The economic journal}, 38\penalty0 (152):\penalty0 543--559, 1928.

\bibitem[Sakamoto and Zuazua(2021)]{sakamoto2021turnpike}
Noboru Sakamoto and Enrique Zuazua.
\newblock The turnpike property in nonlinear optimal control—a geometric approach.
\newblock \emph{Automatica}, 134:\penalty0 109939, 2021.

\bibitem[Schie{\ss}l et~al.(2025)Schie{\ss}l, Ou, Faulwasser, Baumann, and Gr{\"u}ne]{schiessl2025turnpike}
Jonas Schie{\ss}l, Ruchuan Ou, Timm Faulwasser, Michael~H Baumann, and Lars Gr{\"u}ne.
\newblock Turnpike and dissipativity in generalized discrete-time stochastic linear-quadratic optimal control.
\newblock \emph{SIAM Journal on Control and Optimization}, 63\penalty0 (2):\penalty0 1432--1457, 2025.

\bibitem[Sun(2021)]{Sun2021}
Jingrui Sun.
\newblock Two-person zero-sum stochastic linear-quadratic differential games.
\newblock \emph{SIAM Journal on Control and Optimization}, 59\penalty0 (3):\penalty0 1804--1829, 2021.

\bibitem[Sun and Yong(2014)]{Sun.J.R.2014_NILQ}
Jingrui Sun and Jiongmin Yong.
\newblock Linear quadratic stochastic differential games: Open-loop and closed-loop saddle points.
\newblock \emph{SIAM Journal on Control and Optimization}, 52\penalty0 (6):\penalty0 4082--4121, 2014.

\bibitem[Sun and Yong(2024{\natexlab{a}})]{sun2024long}
Jingrui Sun and Jiongmin Yong.
\newblock Long-time behavior of zero-sum linear-quadratic stochastic differential games.
\newblock \emph{arXiv preprint arXiv:2406.02089}, 2024{\natexlab{a}}.

\bibitem[Sun and Yong(2024{\natexlab{b}})]{sun2024turnpike}
Jingrui Sun and Jiongmin Yong.
\newblock Turnpike properties for mean-field linear-quadratic optimal control problems.
\newblock \emph{SIAM Journal on Control and Optimization}, 62\penalty0 (1):\penalty0 752--775, 2024{\natexlab{b}}.

\bibitem[Sun and Yong(2024{\natexlab{c}})]{sun2024turnpike-2}
Jingrui Sun and Jiongmin Yong.
\newblock Turnpike properties for stochastic linear-quadratic optimal control problems with periodic coefficients.
\newblock \emph{Journal of Differential Equations}, 400:\penalty0 189--229, 2024{\natexlab{c}}.

\bibitem[Sun et~al.(2022)Sun, Wang, and Yong]{sun2022turnpike}
Jingrui Sun, Hanxiao Wang, and Jiongmin Yong.
\newblock Turnpike properties for stochastic linear-quadratic optimal control problems.
\newblock \emph{Chinese Annals of Mathematics, Series B}, 43\penalty0 (6):\penalty0 999--1022, 2022.

\bibitem[Tr{\'e}lat et~al.(2025)Tr{\'e}lat, Zeng, and Zhang]{trelat2025exponential}
Emmanuel Tr{\'e}lat, Xingwu Zeng, and Can Zhang.
\newblock The exponential turnpike property for periodic linear quadratic optimal control problems in infinite dimension.
\newblock \emph{SIAM Journal on Control and Optimization}, 63\penalty0 (4):\penalty0 2524--2546, 2025.

\bibitem[Wang and Yu(2010)]{wang2010pontryagin}
Guangchen Wang and Zhiyong Yu.
\newblock A pontryagin's maximum principle for non-zero sum differential games of {BSDEs} with applications.
\newblock \emph{IEEE Transactions on Automatic control}, 55\penalty0 (7):\penalty0 1742--1747, 2010.

\bibitem[Wang and Yu(2012)]{wang2012partial}
Guangchen Wang and Zhiyong Yu.
\newblock A partial information non-zero sum differential game of backward stochastic differential equations with applications.
\newblock \emph{Automatica}, 48\penalty0 (2):\penalty0 342--352, 2012.

\bibitem[Wu et~al.(2024{\natexlab{a}})Wu, Li, Xiong, and Zhang]{My-paper-2024-infinite-M-ZLQ}
Fan Wu, Xun Li, Jie Xiong, and Xin Zhang.
\newblock Stochastic linear-quadratic differential game with {M}arkovian jumps in an infinite horizon.
\newblock \emph{arXiv preprint arXiv:2408.12818}, 2024{\natexlab{a}}.

\bibitem[Wu et~al.(2024{\natexlab{b}})Wu, Li, and Zhang]{My-paper-2024-finite-M-ZLQ}
Fan Wu, Xun Li, and Xin Zhang.
\newblock Open-loop and closed-loop solvabilities for zero-sum stochastic linear quadratic differential games of {M}arkovian regime switching system.
\newblock \emph{arXiv preprint arXiv:2409.01973}, 2024{\natexlab{b}}.

\bibitem[Wu et~al.(2025)Wu, Li, and Zhang]{My-paper-2025-infinite-M-SLQ}
Fan Wu, Xun Li, and Xin Zhang.
\newblock Stochastic linear quadratic optimal control problems with regime-switching jumps in infinite horizon.
\newblock \emph{SIAM Journal on Control and Optimization}, 63\penalty0 (2):\penalty0 852--891, 2025.

\bibitem[Yu(2015)]{yu2015optimal}
Zhiyong Yu.
\newblock An optimal feedback control-strategy pair for zero-sum linear-quadratic stochastic differential game: The {R}iccati equation approach.
\newblock \emph{SIAM Journal on Control and Optimization}, 53\penalty0 (4):\penalty0 2141--2167, 2015.

\end{thebibliography}

\end{document}